\def\Diff{\mathrm{d}}
\def\Fun{\mathcal{F}}
\def\Mfd{\mathcal{M}}
\def\tr{\mathrm{tr}}
\def\Real{\mathbb{R}}
\def\Expect{\mathbb{E}}
\def\Prob{\mathbb{P}}
\newcommand{\TheTitle}{Global optimization with orthogonality constraints via stochastic diffusion on manifold}
\newcommand{\ShortTitle}{Global optimization with orthogonality constraints}
\newcommand{\TheAuthors}{H. Yuan, X. Gu, R. Lai, and Z. Wen}
\headers{\ShortTitle}{\TheAuthors}
\title{{\TheTitle}
}
\author{
  Honglin Yuan\thanks{Computational Mathematics, Peking University, China (\email{yhlmath@pku.edu.cn}, \email{xiaoyigu@pku.edu.cn}).}
  \and
  Xiaoyi Gu\footnotemark[1]
  \and
  Rongjie Lai\thanks{Department of Mathematics, Rensselaer Polytechnic Institute, Troy, NY 12180 (\email{lair@rpi.edu}).}
  \and
  Zaiwen Wen\thanks{Beijing International Center for Mathematical Research, Peking University, China (\email{wenzw@pku.edu.cn}).}
}
\begin{document}

\maketitle

\begin{abstract}
Orthogonality constrained optimization is widely used in applications from science and engineering. Due to the nonconvex orthogonality constraints, many numerical algorithms often can hardly achieve the global optimality. We aim at establishing an efficient scheme for finding global minimizers under one or more orthogonality constraints. The main concept is based on noisy gradient flow constructed from stochastic differential equations (SDE) on the Stiefel manifold, the differential geometric characterization of orthogonality constraints. We derive an explicit representation of SDE on the Stiefel manifold endowed with a canonical metric and propose a numerically efficient scheme to simulate this SDE based on Cayley transformation with theoretical convergence guarantee. The convergence to global optimizers is proved under second-order continuity. The effectiveness and efficiency of the proposed algorithms are demonstrated on a variety of problems including homogeneous polynomial optimization, computation of stability number, and 3D structure determination from Common Lines in Cryo-EM.  \end{abstract}
\begin{keywords}
Orthogonality constrained optimization, Global optimization, Stochastic
differential equations,  Stochastic diffusion on manifold  
\end{keywords}

\begin{AMS}
90C26, 65K05, 49Q99	
\end{AMS}

\section{Introduction}
\label{sec:intro}
Mathematically, the orthogonality constrained problem can be formulated as the following form:
\begin{equation}
	\min_{X \in \Real^{n \times p}} \Fun (X),~ \quad \text{s.t.}~ X^\top X = I_p,
	\label{eq:prob:main}
\end{equation}
where  $\mathcal{F}$ is a smooth objective function and $I_p$ indicates the $p$-by-$p$ identity matrix. The feasible set $\Mfd_{n,p}=\{X \in \Real^{n \times p}: X^\top X = I_p\}$ is well-known as the Stiefel manifold (once equipped with its natural submanifold structure from $\Real^{n \times p}$). We also denote it by $\Mfd$ if there is no ambiguity on the dimensions. 

Particularly in the case of $p = 1$, the above problem is known as the
spherically constrained problem. In the case of $p = n$, the feasible set
becomes orthogonal group $\mathcal{O}_n$, where the feasible matrices are square
and orthogonal. More generally, the following optimization problem with multiple
orthogonality (or spherical) constraints is widely used in many problems such as
conformal mapping~\cite{gu2003global,lai2014folding}, p-Harmonic
flow~\cite{lin1989relaxation,tang2001color,vese2002numerical,goldfarb2009curvilinear},
1-bit compressive sensing~\cite{boufounos20081,laska2011trust}, compressed
modes~\cite{ozolicnvs2013compressed}, the graph stability number, Cryo-electron
microscopy (Cryo-EM)~\cite{Singer:2011ba}, nonlinear eigenvalue problem in
density functional theory \cite{scf-sinum,TRDFT} as well as dictionary learning~\cite{aharon2006rm,cai2014data}:
\begin{equation}
	\min_{X_1 \in \Real^{n_1 \times p_1},\dots,X_q \in \Real^{n_q \times p_q}} \Fun(X_1,\dots,X_q),~ \text{s.t.}~ X_i^\top X_i = I_{p_i},~ i = 1,\dots,q.
	\label{eq:prob:multi}
\end{equation}

Non-convexity is one of the major challenges of problems \cref{eq:prob:main} and
\cref{eq:prob:multi} since there might be mulitiple local minimizers, from which
finding global minimizers is generally NP-hard. Most existing
algorithms \cite{Absil:2007uy,Wen:2012ga,NocedalWright06} on the Stiefel manifold focus on finding local optimizers without exploiting the global structures, and thus there is no guarantee to obtain the global minimizers except for some trivial cases.

\subsection{Local feasible solver on Stiefel manifold}
\label{subsec:OptStfMfd}
%
%

One step of our algorithm is mostly based on the first-order algorithms proposed in \cite{Wen:2012ga}, which we consider to have low computational cost and briefly describe here. 

Given a point $X$ on $\Mfd_{n,p}$, the canonical metric $g^c$ on the tangent space $\mathcal{T}_X\Mfd_{n,p} =  \{Z\in \Real^{n,p},Z^\top X+X^\top Z=0\}$ is defined as
\[
	\begin{aligned}
		g^c(Z_1,Z_2) & :=\tr(Z^\top_1(I-\frac{1}{2}XX^\top)Z_2).
	\end{aligned}
\]
This metric considers the Stiefel manifold to be a quotient space $M_{n,p} = O_n/O_{n-p}$, in which $O_k$ is the group of $k\times k$ orthogonal matrices. 
Let us write $G_{ij}=\partial_{ij}\Fun$, then the gradient of $\Fun$ with
respect to $g^c$ \cite{Absil:2007uy,Wen:2012ga}  is given by 
\begin{equation}
	\nabla_{\mathcal{M}}^c\Fun (X)=G-XG^\top X.
	\label{eq:grad:c}
\end{equation}
Throughout the paper, we will always adopt the canonical metric, the superscript indicating the canonical metric will be omitted.
Let $A = GX^\top-XG^\top$, the authors in \cite{Wen:2012ga} consider an implicit update scheme as 
\begin{equation}
	Y(\tau) = X - \tau A(\frac{X+Y(\tau)}{2}).
	\label{eq:gradflow}
\end{equation}
This leads to the following Cayley transformation,
\begin{equation}
	Y(\tau) = (I+\frac{\tau}{2}A)^{-1}(I-\frac{\tau}{2}A)X.
    \label{eq:gradflowout}
\end{equation}
It has been shown in \cite{Wen:2012ga} that the update scheme automatically preserve the orthogonality constraint due to the property of Cayley transformation. In addition, with certain conditions, it has also been proved in \cite{Wen:2012ga} that the sequence generated using this algorithm satisfying $\lim_{k\rightarrow 0 }\|\nabla F(X^k) \|_F = 0$. 

\subsection{Global optimization by diffusions}
\label{subsec:GlobalOptDfu}
For a general non-convex unconstrained optimization problem 
\begin{equation}
	\min_{x\in\Real^n} f(x).
	\label{eq:prob:orig}
\end{equation}
A well known method is to consider the gradient flow
\begin{equation}
	\Diff x(t) = -\nabla f(x(t)) \Diff t,
\end{equation}
yet $x(t)$ is often trapped at a local stationary point due to nonconvexity. 
A well-known remedy is to add white noise to the gradient flow \cite{AluffiPentini:1985eq,Chiang:1987ip,Geman:1986js,Gidas:1985kz}, allowing the trajectory to ``climb over the mountains'' and escape from the local minimizers. Mathematically, this type of methods can be formulated as the following \emph{Stochastic Differential Equation} (SDE) \cite{oksendal:2003jp}:
\begin{equation}
	\Diff x(t) = -\nabla f(x(t)) \Diff t + \sigma(t) \Diff B(t),
	\label{eq:sde:rn}
\end{equation}
where $f(x)$ is the objective function defined on $\Real^n$ and $B(t)$ is an $n$-dimensional standard Brownian motion, which is also known as the Wiener process. Different choices of $\sigma(t)$ lead to different diffusion algorithms and different results. 
It has been proved in \cite{Chiang:1987ip,Geman:1986js} that if the diffusion
strength $\sigma(t)$ is chosen as $\sigma(t) = c/\sqrt{\log(t+2)}$ for some $c
\geq c_0$, reffered as \emph{Continuous Diminishing Diffusion} (CDD, also known
as \emph{Simulated Annealing}), $x(t)$ converges to the set of global minimizers
under appropriate conditions on $f$. In \cite{Geman:1986js}, the objective
function is defined on a compact set, while the assumption is lifted in
\cite{Chiang:1987ip}. Other choices of the diffusion strength $\sigma(t)$ are
discussed in several articles.  The properties with large $\sigma(t)$
is discussed in \cite{Yin:2006fm}. More recently, a method called intermittent diffusion (ID) has been proposed in  \cite{CHOW:2013jm}, where a piecewise constant diffusion strength $\sigma(t) = \sum^N_{i=1}\sigma_i I_{[S_i,S_i+T_i]}(t)$ is considered. In other words, this method essentially considers to alternatively update variables between gradient descent and noisy gradient descent.  It has been shown in \cite{CHOW:2013jm} the global convergence ID and its effectiveness in specific problems.


To the best of our knowledge, only a few articles apply the SDE method to
constrained problems.   Problems with linear constrained is discussed in
\cite{Parpas:2006jq}. Portfolio selection having higher order moments with selected
constraints is studied in \cite{Maringer:2007jm}. The
authors in \cite{Parpas:2009hy} apply the method to robust chance constraint
problems and a class of minimax problems is solved in \cite{Parpas:2008hu}. In
\cite{Parpas:2009hx},  problems with equality constraints are solved but no theoretical validation is provided that the constraints can be preserved using the proposed SDE.

\subsection{Main Results}
In order to find the global minimizers of orthogonality constrained problems, it is natural to consider a generalization of the diffusion methods based on \cref{eq:sde:rn} in Euclidean space to problems with orthogonality constraints \cref{eq:prob:main}. Our strategy is a combination of the CDD and ID, which leads to an optimization procedure that alternatively apply the diminishing diffusion and the deterministic local solver mentioned in \cref{subsec:OptStfMfd}. We refer this procedure as an intermittent diminishing diffusion on manifold (IDDM). One crucial step of IDDM is to explore an computational tractable method to the SDE on the Stiefel manifold, which can be symbolically written as follows:
\begin{equation}
	\Diff X(t) = -\nabla_\Mfd \Fun(X(t))\Diff t + \sigma(t) \circ \Diff B_\Mfd (t),
	\label{eq:sde:stfmfd}
\end{equation}
where $\nabla_\Mfd$ and $B_\Mfd$ stand for gradient and Wiener process on manifold, respectively. One of the major challenges of using the above equation on the Stiefel manifold is the lack of global parameterization of the manifold, which make the numerical computation not straightforward to generate Wiener process on the Stiefel manifold. On the other hand, $\Mfd_{n,p}$ is an embedding manifold in $\Real^{n,p}$, whose embedding coordinates can be used to design an extrinsic form of the above SDE. In order to make use of \cref{eq:sde:stfmfd} on numerical optimization, we propose an extrinsic presentation to facilitate numerical work. Our idea is to project the Brownian motion in the ambient space to the tangent space of $\Mfd_{n,p}$. 
Based on this idea, we have theoretically validate the proposed procedure of IDDM for orthogonality constrained problems. More specifically, we have established the following results:
\begin{enumerate}
\item We theoretically show that the proposed extrinsic form is in fact generating feasible path constrained on $\Mfd_{n,p}$. 
\item We also validate that the proposed method of projection Brownian motion in
  $\Real^{n,p}$ to the tangent space of $\Mfd_{n,p}$ is an extrinsic form of the
  Brownian motion on the $\Mfd_{n,p}$.
\item We further propose a numerical-efficient scheme to solve the proposed extrinsic equation and theoretically validate the half-order convergence of the scheme. 
\item We also provide theoretical global convergence analysis of the proposed method, which is a consequence that the proposed extrinsic form satisfies the associated Fokker-Planck equation on the manifold. 
\item We numerically demonstrate that often only a few cycles of IDDM is needed to
  identify a better solution than the local algorithm for difficult problems with
  multiple local minimizers.
\end{enumerate}  

The rest of this paper is organized as follows. In section \ref{sec:SDEonStiefelMfd}, we propose an extrinsic form of the SDE \eqref{eq:sde:stfmfd} and discuss its well-posedness. We also show the proposed extrinsic diffusion term in fact provides the Brownnian motion on the Steifel manifold. Numerical scheme of solving the proposed SDE and its convergence is discussed in section \ref{sec:numericalscheme}. After that, we describe the proposed intermittent diminishing diffusion on manifold (IDDM) and show that IDDM  converges to global optimizers of the orthogonality constrained problems with probability almost equal to 1 in section \ref{sec:IDDM}. Numerically we demonstrate the effectiveness of the proposed method on several applications involving orthogonality constrained optimization in section \ref{sec:experiments}. Finally, we conclude our work in section \ref{sec:conclusion}. 


\section{SDE on Stiefel manifold}
\label{sec:SDEonStiefelMfd}

In this section, we propose an explicit representation of the SDE \eqref{eq:sde:stfmfd}. We also validate that the proposed explicit form is well-posed by showing that solutions of the explicit form stay on the Stiefel manifold with probability 1. We further show that the proposed method of projecting Brownian motion is a Brownian motion on the Stiefel manifold.

As we mentioned in the introduction, one crucial step of adapting SDE methods to the orthogonality constrained problems is how to design a computation tractable way of generating Brownian motion on the Stiefel manifold. Note that for any matrix $Z\in\Real^{n,p}$, we can use the following operator to project $Z$ to $\mathcal{T}_{X}\Mfd_{n,p}$.
\begin{equation}
\label{eqn:projection}
P:\Real^{n\times p} \rightarrow \mathcal{T}_{X}\Mfd_{n,p},\quad  Y\mapsto P_X(Z) = Z- \alpha XZ^{\top}X - \beta X X^{\top} Z
\end{equation}
where $\alpha = \sqrt{2}/2$, $\beta = 1-\sqrt{2}/2$. This motivates us to project the Brownian motion in the ambient space to the tangent space of $\Mfd_{n,p}$ based on this projection operator. Namely, we propose the extrinsic representation of the SDE \eqref{eq:sde:stfmfd} on Stiefel manifold as
	\begin{equation}
		\mathrm{d} X(t) = -\nabla_{\mathcal{M}} \mathcal{F}(X(t)) \mathrm{d} t + \sigma(t) \sum_{u=1}^n \sum_{v=1}^p P_{uv}(X(t)) \circ \mathrm{d} B_{uv}(t),
		\label{eq:sde:ext:stra}
	\end{equation}
	where $\{B_{uv}(t)\}$ is a series of (independent) one-dimensional standard Brownian motion, and $P_{uv}$ is defined by
	\begin{equation}
		\begin{aligned}
			P_{uv}(X) = E_{uv} - \alpha XE_{uv}^\top X - \beta XX^\top E_{uv}, \quad X \in \mathcal{M}_{n,p}, \\
			 u = 1,2,\dots,n, \quad v = 1,2,\dots,p.
		\end{aligned}
	\end{equation}
\subsection{Well-posedness of the extrinsic SDE}
\label{sec:diff:mfd}
There are several issues with respect to \cref{eq:sde:ext:stra} to be clarified. First, the definition of coefficients of drift term and diffusion term is restricted to the manifold, and thus a proper extension is needed in order to make it a well-posed SDE in Euclidean space $\mathbb{R}^{n\times p}$.  We first show that the SDE given by \cref{eq:sde:ext:stra} is well-posed. In other words, there is an equivalent extension to the euclidean space $\Real^{n\times p}$ that exists, lies on the manifold and gives a unique solution. We also expect that the solution will not leave the manifold so that the off-manifold coefficient will not impact the solution. The answers of all the above concerns are addressed in the following theorem. 


\begin{theorem}
	\label{thm:1}
	Let $V$ be an arbitrary smooth vector field on the Stiefel manifold $\Mfd_{n,p}$. Then
	\begin{enumerate}[(a)]
	\item There exists some smooth extensions of $V(X)$ and $P_{uv}(X)$ in $\Real^{n\times p}$ (denoted by $\tilde{V}(X)$ and $\tilde{P}_{uv}(X)$), which are globally Lipschitz. Hence, there exists a unique solution $X(t,w)$ for the extended SDE
		      \begin{equation}
		      	\mathrm{d} X(t) = \tilde{V}(X(t)) \mathrm{d} t + \sigma(t) \sum_{u=1}^n\sum_{v=1}^p \tilde{P}_{uv}(X(t)) \circ \mathrm{d} B_{uv}(t).
		      	\label{eq:sde:extended}
		      \end{equation}
		      in $\Real^{n\times p}$ once the extension is fixed.
		\item Let $X(t)$ be the solution of \cref{eq:sde:extended}, and then $X(t)$ almost surely stays on $\Mfd_{n,p}$ provided it originate on the manifold, i.e.,
		      \begin{equation}
		      	\Prob\{X(t) \in \Mfd_{n,p}|X(0) \in \Mfd_{n,p}\} = 1, \quad\forall~ t \geq 0.
		      	\label{eq:feasibility}
		      \end{equation}
		      In addition,  $X(t)$ does not leave its connected component in the case of $n = p$, i.e.,
		      \[
		      	\Prob\{\mathrm{det}(X(t)) = \mathrm{det}(X(0))|X(0) \in \Mfd_{n,n}\} = 1, \quad \forall~t\geq 0.
		      \]
		      The solution of \cref{eq:sde:extended} is unique regardless of the extension of $V$ and $P_{uv}$.
	\end{enumerate}
\end{theorem}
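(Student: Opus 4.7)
For part (a), my approach would be to exploit the compactness of $\Mfd_{n,p}$. Since $\Mfd_{n,p}$ is a compact embedded submanifold of $\Real^{n\times p}$, a standard tubular neighborhood construction (or alternatively just the closest-point projection onto $\Mfd_{n,p}$, which is smooth in a neighborhood) furnishes smooth extensions $\tilde V$, $\tilde P_{uv}$ defined on an open neighborhood $U\supset\Mfd_{n,p}$. Multiplying by a smooth bump $\chi$ that equals $1$ on a slightly smaller neighborhood of $\Mfd_{n,p}$ and has compact support in $U$ produces smooth, compactly supported extensions on all of $\Real^{n\times p}$. Compactly supported smooth functions are automatically globally Lipschitz, so the classical Itô--Stratonovich existence and uniqueness theorem (e.g.\ \cite{oksendal:2003jp}) applies to \cref{eq:sde:extended}.

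For the invariance claim in part (b), the key observation is that the Stratonovich formulation \cref{eq:sde:extended} obeys the classical chain rule. I would therefore apply the chain rule to the map $\Phi(X)=X^\top X-I_p$: because $V$ is a vector field on $\Mfd_{n,p}$ and because the projection $P_{uv}(X)$ lies in $\mathcal{T}_X\Mfd_{n,p}$ whenever $X\in\Mfd_{n,p}$, a direct calculation (plugging the explicit formulas for $\alpha,\beta$ into $P_{uv}(X)^\top X+X^\top P_{uv}(X)$) shows that both coefficients are tangent to $\Mfd_{n,p}$, i.e.\
\[
	\tilde V(X)^\top X+X^\top\tilde V(X)=0,\qquad \tilde P_{uv}(X)^\top X+X^\top\tilde P_{uv}(X)=0\quad\text{for }X\in\Mfd_{n,p}.
\]
Consequently the Stratonovich expression for $\Diff\Phi(X(t))$ vanishes on $\Mfd_{n,p}$. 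The cleanest way to convert this local statement into a genuine invariance result is via the Lyapunov function $\psi(X)=\|X^\top X-I_p\|_F^2$: apply Itô's formula (switching back from Stratonovich introduces only tangentially-controlled correction terms), control the resulting drift by $C\psi(X(t))$ using the Lipschitz extensions, and conclude by Gronwall that $\Expect\psi(X(t))=0$ for all $t\geq 0$. Continuity of sample paths then upgrades this to almost-sure containment in $\Mfd_{n,p}$.

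The connected-component statement for $n=p$ is then immediate: $\det:\mathcal{O}_n\to\{\pm1\}$ is continuous and the sample paths $t\mapsto X(t)$ are almost surely continuous, so $\det X(t)$ is constant on $[0,\infty)$ with probability one. Finally, independence of extension follows from a coupling argument: if $(\tilde V_1,\tilde P_{1,uv})$ and $(\tilde V_2,\tilde P_{2,uv})$ are two admissible extensions with respective solutions $X_1(t),X_2(t)$ started at the same point, both stay on $\Mfd_{n,p}$ a.s.\ by the preceding step, and the extensions agree there. Hence $X_1(\cdot)$ solves the SDE built from $(\tilde V_2,\tilde P_{2,uv})$, and pathwise uniqueness in part~(a) forces $X_1\equiv X_2$.

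The main obstacle will be the invariance step: showing the tangency at points of $\Mfd_{n,p}$ implies the path actually remains on the manifold. One has to exploit the Stratonovich-vs.-Itô distinction carefully, because the Itô correction term is precisely what would otherwise pull the trajectory off $\Mfd_{n,p}$; the quadratic Lyapunov estimate above is meant to absorb exactly this correction. The remaining items --- globally Lipschitz extensions, determinant preservation, and extension-independence --- are comparatively routine once the invariance principle is in hand.
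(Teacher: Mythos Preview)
Your proposal is correct and essentially complete, but it follows a different route from the paper. The paper's proof is almost entirely by citation: for (a) it writes down an explicit extension $\tilde V(X)=\zeta_\varepsilon(\|X^\top X-I_p\|_2^2)\,V(\mathcal Q(X))$ (with $\mathcal Q$ the reduced QR factor and $\zeta_\varepsilon$ a cutoff) and similarly for $\tilde P_{uv}$, then invokes the standard SDE existence theorem; for (b) it simply cites Proposition~1.2.8 and Theorem~1.2.9 of Hsu's \emph{Stochastic Analysis on Manifolds} for invariance and extension--independence, treating the $n=p$ case by applying the same proposition to each connected component separately. Your argument, by contrast, supplies the content of those cited results directly: the tubular--neighborhood/bump construction replaces the explicit QR formula, the Lyapunov estimate $\Expect\psi(X(t))\le C\int_0^t\Expect\psi(X(s))\,\Diff s$ replaces the appeal to Hsu's invariance proposition, and your coupling argument is precisely the proof of Hsu's uniqueness theorem. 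The paper's approach is shorter on the page; yours is self--contained and makes the mechanism transparent.

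One point in your sketch deserves a sentence of elaboration when you write it out. The claim that the It\^o drift of $\psi$ is bounded by $C\psi$ hinges on the Stratonovich--to--It\^o correction for $\Phi(X)=X^\top X-I_p$ also vanishing on $\Mfd_{n,p}$, not just the first--order terms. This is true, but the reason is that the correction is $\tfrac12\sum_{u,v}D\!\left(D\Phi[\tilde P_{uv}]\right)[\tilde P_{uv}]$, i.e.\ a \emph{tangential} derivative of the function $g_{uv}:=D\Phi[\tilde P_{uv}]$, which itself vanishes on $\Mfd_{n,p}$; differentiating a function that is identically zero on $\Mfd_{n,p}$ along a tangent direction gives zero on $\Mfd_{n,p}$. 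With that observation in hand, both the drift and diffusion of $\Phi$ vanish on $\{\Phi=0\}$, whence (using the Lipschitz extensions and that $\Phi$ is a submersion near $\Mfd_{n,p}$) they are $O(\|\Phi\|)$, and the drift of $\psi=\|\Phi\|_F^2$ is indeed $O(\psi)$ as you assert.
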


\begin{proof}
	\begin{enumerate}[(a)]
		\item Direct observation suggests that $\Mfd_{n,p}$ is a compact subset of $\Real^{n \times p}$, which makes it possible to construct a globally Lipschitz extension. The extension is not unique, and for example we can take 
		      \begin{equation}
		      	\begin{cases}
		      		\tilde{V}(X) := \zeta_\varepsilon(\|X^\top X - I_p\|_2^2)V(\mathcal{Q}(X)) ,                                                                                           \\
					\tilde{P}_{uv}(X) :=  \zeta_\varepsilon(\|X^\top X-I_p\|_2^2)\left(E_{uv}-\alpha XE_{uv}^\top X - \beta XX^\top E_{uv}\right),
		      	\end{cases}
		      	\label{eq:extension}
		      \end{equation}
		      where $\mathcal{Q}(X)$ indicates the $n$-by-$p$ matrix from reduced $QR$ decomposition of $X$ (here we follow the convention that the diagonal entries of upper triangular $R$ are non-negative). $\zeta_\varepsilon$ is a $C_0^\infty([0,+\infty))$ mollifier satisfying
		      \begin{equation}
		      	\zeta_\varepsilon([0,\varepsilon]) \equiv 1, \quad \zeta_\varepsilon([2\varepsilon,+\infty)) \equiv 0,
		      	\label{eq:mollifier}
		      \end{equation}
		      where $\varepsilon$ is a given positive constant with $\varepsilon < 1/2$. Under this condition one can show that both $\tilde{V}$ and $\tilde{P}_{ij}$ are globally Lipschitz. The existence and uniqueness of \cref{eq:sde:extended} follow directly from the existence and uniqueness theorem of general SDE (see Theorem 5.2.1 of \cite{oksendal:2003jp} for example).
		\item The general feasibility results \cref{eq:feasibility} can be derived by viewing \cref{eq:sde:extended} as a process driven by $\Real^{np+1}$-valued semimartingale $Z(t) = (t,\sigma(t)B_{ij}(t))$ and applying Proposition 1.2.8 of \cite{Hsu:2001tz}.  The special case of $n = p$ can be treated similarly but viewing two connected components as two separate manifolds instead. The uniqueness can be referred to Theorem 1.2.9 of \cite{Hsu:2001tz}.
	\end{enumerate}
\end{proof}

In view of the uniqueness result we can specify that the extension of $\tilde{V}$ and $\tilde{P}_{uv}$ is given by \cref{eq:extension} to facilitate further discussion. Sometimes it would be more convenient to analyze the Ito version of \cref{eq:sde:extended}, which can be derived from the following transformation property between Ito SDE and Stratonovich SDE in the Euclidean space. 
\begin{lemma}[\cite{oksendal:2003jp}]
	\label{lemma:transformation}
	The coresponding Ito version of Stratonovich system
\begin{equation*}
		\mathrm{d} X_{\eta}(t) = h_{\eta}(X(t),t)\mathrm{d} t + \sum_{\lambda}H_{\eta\lambda}(X(t),t)\circ \mathrm{d} B_\lambda(t)
		\end{equation*}
	is given by
	\begin{equation}
		\mathrm{d} X_{\eta}(t) = \left[h_{\eta}(X(t),t) + \frac{1}{2} \sum_{\lambda}\left(\sum_{\mu}\frac{\partial H_{\eta\lambda}}{\partial X_\mu}H_{\mu \lambda}\right)\right]\mathrm{d} t + \sum_{\lambda}H_{\eta\lambda}(X(t),t) \mathrm{d} B_\lambda(t).
		\label{eq:lemma:transform}
	\end{equation}
\end{lemma}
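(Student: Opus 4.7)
The plan is to reduce the claim to the standard identity relating Stratonovich and Ito integrals: for any continuous semimartingale $F$ and standard Brownian motion $B_\lambda$ one has
\[ \int_0^t F(s) \circ \mathrm{d} B_\lambda(s) = \int_0^t F(s)\, \mathrm{d} B_\lambda(s) + \frac{1}{2} \langle F, B_\lambda \rangle_t, \]
where $\langle \cdot, \cdot \rangle_t$ denotes the quadratic covariation. Applied componentwise with $F(s) = H_{\eta\lambda}(X(s),s)$, this reduces the lemma to computing $\mathrm{d}\langle H_{\eta\lambda}(X(\cdot),\cdot), B_\lambda \rangle_t$ for each pair $(\eta,\lambda)$ and then assembling the resulting correction terms into the drift.

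To compute the covariation I would apply the multidimensional Ito formula to the smooth function $(x,t) \mapsto H_{\eta\lambda}(x,t)$ evaluated along $X(t)$. Only the martingale part of $\mathrm{d} H_{\eta\lambda}$ contributes to $\langle \cdot, B_\lambda \rangle$, and this equals $\sum_\mu \frac{\partial H_{\eta\lambda}}{\partial X_\mu}$ times the martingale part of $\mathrm{d} X_\mu$. Crucially, the martingale part of $\mathrm{d} X_\mu$ is the same whether read in the Ito or Stratonovich sense, since the two readings differ only by a bounded variation process, so it equals $\sum_{\lambda'} H_{\mu\lambda'}\, \mathrm{d} B_{\lambda'}$. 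Using the orthogonality relation $\mathrm{d}\langle B_{\lambda'}, B_\lambda\rangle_t = \delta_{\lambda\lambda'}\,\mathrm{d} t$ then yields
\[ \mathrm{d}\langle H_{\eta\lambda}(X(\cdot),\cdot), B_\lambda\rangle_t = \sum_\mu \frac{\partial H_{\eta\lambda}}{\partial X_\mu}\, H_{\mu\lambda}(X(t),t)\, \mathrm{d} t. \]
Multiplying by $\frac{1}{2}$ and summing over $\lambda$ produces exactly the drift correction appearing in \cref{eq:lemma:transform}, while the martingale term $\sum_\lambda H_{\eta\lambda}\, \mathrm{d} B_\lambda$ transfers unchanged from the Stratonovich to the Ito formulation.

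The only real subtlety is justifying that $H_{\eta\lambda}(X(t),t)$ is itself a continuous semimartingale, which is needed to invoke the Stratonovich--Ito conversion identity in the first place; this is immediate once one knows that $X$ is a semimartingale as the solution of the Stratonovich system and that $H_{\eta\lambda}$ is smooth (so that Ito's formula applies). Since the statement is entirely classical, a fully detailed derivation can be deferred to the corresponding passages in \cite{oksendal:2003jp}, and only the identifications above need to be spelled out to match the notation of \cref{eq:lemma:transform}.
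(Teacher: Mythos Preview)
Your argument is correct and is the standard derivation of the Stratonovich--It\^o correction term. Note, however, that the paper does not actually prove this lemma: it is simply quoted from \cite{oksendal:2003jp} without proof, so there is no ``paper's own proof'' to compare against. Your sketch therefore supplies strictly more than the paper does, and the route you take---reducing to $\int F\circ\mathrm{d}B_\lambda = \int F\,\mathrm{d}B_\lambda + \tfrac12\langle F,B_\lambda\rangle$ and then reading off the covariation via It\^o's formula applied to $H_{\eta\lambda}(X(t),t)$---is exactly how the result is obtained in the cited reference.
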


Based on this lemma, we can derive the Ito version of the Stratonovich SDE \cref{eq:sde:extended} described in the following theorem.
\begin{theorem}
	\label{thm:ito}
	The corresponding Ito version of Stratonovich SDE \cref{eq:sde:extended} on $\Mfd_{n,p}$  (with feasible initial point $X(0)\in \Mfd_{n,p}$) is given by
	\begin{equation}
		\begin{aligned}
			\mathrm{d} X(t) & = \left(V(X(t)) -  \frac{n-1}{2} \sigma^2(t)X(t)\right)\mathrm{d}t                                                                                                 \\
			                & + \sigma(t) \sum_{u=1}^n\sum_{v=1}^p \left( E_{uv}-\alpha XE_{uv}^\top X - \beta XX^\top E_{uv} \right)\mathrm{d} B_{uv}(t).
			\label{eq:sde:extended:ito}
		\end{aligned}
	\end{equation}
	Here we omit the discussion of definition of parameters outside the manifold.
\end{theorem}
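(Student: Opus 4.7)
The strategy is to apply Lemma 2.3 (the Stratonovich-to-Ito conversion) directly to the extended SDE \eqref{eq:sde:extended}, with the identification $h_\eta(X,t) = \tilde V_\eta(X)$ and, indexing the noises by $\lambda = (u,v)$, $H_{\eta\lambda}(X,t) = \sigma(t)\bigl(\tilde P_{uv}(X)\bigr)_\eta$. Part (b) of Theorem \ref{thm:1} guarantees that the solution stays on $\Mfd_{n,p}$ almost surely, and the uniqueness statement lets us work with the particular extension \eqref{eq:extension}. Since $\zeta_\varepsilon \equiv 1$ in an open neighborhood of $\Mfd_{n,p}$, both $\zeta_\varepsilon$ and its partial derivatives are locally constant on the manifold, so on $\Mfd_{n,p}$ the drift reduces to $V(X)$ and each $\tilde P_{uv}(X)$, together with its first partials, agrees with the un-mollified $P_{uv}(X) = E_{uv} - \alpha X E_{uv}^\top X - \beta X X^\top E_{uv}$.

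The heart of the proof is then to compute the Ito correction and identify it with $-\tfrac{n-1}{2}\sigma^2(t) X$. That is, for every $X \in \Mfd_{n,p}$ I would verify
\[
  C_{ij}(X) \;:=\; \frac{1}{2}\sum_{u=1}^n\sum_{v=1}^p\sum_{k=1}^n\sum_{l=1}^p \frac{\partial (P_{uv})_{ij}}{\partial X_{kl}}\,(P_{uv})_{kl} \;=\; -\frac{n-1}{2}\,X_{ij}.
\]
Using the explicit components $(P_{uv})_{ij} = \delta_{iu}\delta_{jv} - \alpha X_{iv}X_{uj} - \beta \delta_{jv}(XX^\top)_{iu}$, the plan is to expand $\partial(P_{uv})_{ij}/\partial X_{kl}$ as a sum of four Kronecker-bearing pieces and to pair each against the three terms of $(P_{uv})_{kl}$. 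Summing over $(u,v,k,l)$, the Stiefel identity $\sum_i X_{iu}X_{iv} = \delta_{uv}$ for $u,v \le p$ collapses most cross contractions. The specific values $\alpha = \sqrt{2}/2$ and $\beta = 1-\sqrt{2}/2$ are calibrated so that the $\alpha$-only, $\beta$-only, and mixed $\alpha\beta$ contributions reorganize into a single multiple of $X_{ij}$. As a sanity check, the case $p = 1$ (where $\alpha+\beta=1$ gives $P_u = e_u - X_u X$) yields $C_i = -\tfrac12\bigl[nX_i - X_i + X_i - X_i\bigr] = -\tfrac{n-1}{2} X_i$ after using $\|X\|=1$, matching the claimed constant.

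The main obstacle is purely combinatorial index-bookkeeping: the row indices $u,i,k$ range over $\{1,\dots,n\}$ while the column indices $v,j,l$ range only over $\{1,\dots,p\}$, so care is needed not to tacitly invoke $XX^\top = I_n$, which fails for $p<n$. Only $X^\top X = I_p$ is available, and the asymmetry between $n$ and $p$ is exactly the source of the $(n-1)$ coefficient, which arises from an unrestricted sum $\sum_{u=1}^n 1 = n$ compensated by a $-1$ coming from a diagonal contraction through the Stiefel relation. Once this accounting is completed, the drift assembles to $V(X) - \tfrac{n-1}{2}\sigma^2(t) X$, the diffusion coefficient is untouched by the Stratonovich-to-Ito passage, and \eqref{eq:sde:extended:ito} follows.
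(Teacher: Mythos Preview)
Your proposal is correct and follows essentially the same route as the paper's own proof: apply the Stratonovich--to--Ito conversion lemma coordinatewise, note that the mollifier $\zeta_\varepsilon$ is flat on $\Mfd_{n,p}$ so only the polynomial part of $P_{uv}$ contributes, and reduce the correction to a contraction over $(u,v,k,l)$ using $X^\top X = I_p$. The paper carries out exactly the twelve-term expansion you describe, tabulates the pairwise products, and collapses them to $\tfrac{1}{2}\sigma^2(t)\bigl[(2\alpha^2+\beta^2+\alpha\beta-\beta)-(\alpha+\beta)n+(\beta^2+3\alpha\beta-\alpha)p\bigr]X_{ij}$, which for $\alpha=\sqrt{2}/2$, $\beta=1-\sqrt{2}/2$ evaluates to $-\tfrac{n-1}{2}\sigma^2(t)X_{ij}$; your $p=1$ sanity check is consistent with this.
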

\begin{proof} 
Write \cref{eq:sde:extended} coordinate-wise as
	\[
		\begin{aligned}
			\mathrm{d} X_{ij}(t) = & \tilde{V}_{ij}(X) \mathrm{d} t + \sigma(t)\sum_{u=1}^n\sum_{v=1}^p [\zeta_\varepsilon (\|X^\top X-I_p\|_2^2)            \\
			                       & (\delta_{iu}\delta_{jv}-\alpha X_{iv}X_{uj} - \beta \sum_{w=1}^p X_{iw}X_{uw}\delta_{jv}) \circ \mathrm{d} B_{uv}(t) ].
		\end{aligned}
	\]
	Applying \cref{lemma:transformation} by viewing the index $\eta = (i,j)$ and $\lambda = (u,v)$, one can show that the $(i,j)$-th entry of the additional drift term is given by
		\begin{align}
			   & \frac{1}{2}\sigma^2(t) \sum_{u,v,s,t} \left[\partial_{st}(-\alpha X_{iv}X_{uj}-\beta\sum_{w=1}^p X_{iw}X_{uw}\delta_{jv}) \right.   \nonumber  \\
			   & \cdot \left.(\delta_{us}\delta_{vt}-\alpha X_{sv}X_{ut}-\beta\sum_w^{p} X_{sw}X_{uw}\delta_{vt}) \right]      \nonumber                                                                         \\
			=~ & \frac{1}{2}\sigma^2(t) \sum_{u,v,s,t} [(-\alpha \delta_{is}\delta_{vt}X_{uj}-\alpha\delta_{us}\delta_{jt}X_{iv} -\beta\delta_{is}\delta_{jv}X_{ut} - \beta\delta_{jv}\delta_{us}X_{it})  \nonumber \\
			   & \cdot (\delta_{us}\delta_{vt}-\alpha X_{sv}X_{ut}-\beta\sum_{w=1}^{p} X_{sw}X_{uw}\delta_{vt})] \label{eqn:thm3_1}\\			
			   =~ & \frac{1}{2}\sigma^2(t)[(2\alpha^2+\beta^2+\alpha\beta-\beta)-(\alpha+\beta)n+(\beta^2+3\alpha\beta-\alpha)p]X_{ij} \nonumber \\
			=~    & -\frac{(n-1)}{2}\sigma^2(t)X_{ij}. \nonumber
		\end{align}
	Here we omit the discussion of coefficients off the manifold as the
    derivation of the mollifier $\eta_\varepsilon$ will not affect the
    on-manifold result due to the hypothesis of \cref{eq:mollifier}. In
    addition, the second last equality of the above derivation is provided by
    simply expanding each item of \eqref{eqn:thm3_1} using the facts
    $\sum_{u}X_{ui}X_{uj} = \delta_{ij}$ and $\sum_{uv} X_{uv}^2 = p$. More
    specifically, we summarize products among pairs in the following table:
\begin{center}
\vspace{0.3cm}
\begin{tabular}{|c|c|c|c|}
\hline
$\times$ & $\delta_{us}\delta_{vt}$ & 
$-\alpha X_{sv}X_{ut}$ & 
$-\beta\sum_{w=1}^{p} X_{sw}X_{uw}\delta_{vt}$ \\
\hline
 $\displaystyle -\alpha \sum_{u,v,s,t}  \delta_{is}\delta_{vt}X_{uj}$ & $ -\alpha p X_{ij} $ & $\alpha^2X_{ij}$ &$ \alpha\beta p X_{ij}$ \\
 \hline
$\displaystyle-\alpha \sum_{u,v,s,t} \delta_{us}\delta_{jt}X_{iv}$ & $ -\alpha n X_{ij}$ & $ \alpha^2X_{ij}$ & $\alpha\beta p X_{ij}$ \\
\hline
$\displaystyle -\beta \sum_{u,v,s,t} \delta_{is}\delta_{jv}X_{ut}$ & $-\beta X_{ij}$ & $\alpha\beta p X_{ij}$ & $\beta^2X_{ij}$ \\
\hline
$\displaystyle - \beta \sum_{u,v,s,t} \delta_{jv}\delta_{us}X_{it}$ & $-\beta nX_{ij} $& $\alpha\beta X_{ij}$ & $\beta^2 p X_{ij}$\\
\hline
\end{tabular}
\end{center}
\end{proof}
\begin{remark}
	{\rm
		Using the projection operator defined in \eqref{eqn:projection}, we can simplify the notation by writing the diffusion term in short as
		\begin{equation}
			\sum_{u=1}^n\sum_{v=1}^p \left(E_{uv}-\alpha XE_{uv}^\top X - \beta XX^\top E_{uv}\right) \mathrm{d} B_{uv}(t) = P_X(\Diff B(t)).
		\end{equation}
		For example, the above Ito SDE \cref{eq:sde:extended:ito} can be simplified as
		\begin{equation}
			\Diff X(t) = \left(V(X(t)) -  \frac{n-1}{2} \sigma^2(t)X(t)\right)\mathrm{d}t + \sigma(t)P_X(\Diff B(t)).
		\end{equation}
		We will follow this convention throughout the paper.
	}
\end{remark}

\subsection{Laplace-Beltrami Operator on Canonical Stiefel Manifold}
\label{subsec:LB}
It remains to show that the diffusion term of extrinsic SDE \cref{eq:sde:extended} is the Brownian motion on the Stiefel manifold. Before considering that, we first provide an extrinsic representation of the Laplace-Beltrami (LB) operator on the Stiefel manifold.

\begin{theorem}[Extrinsic form of the LB operator on $\Mfd_{n,p}$]\label{thm:lb}
	The LB operator at $X$ on $\mathcal{M}_{n,p}$ (endowed with the canonical metric) is given by
	\begin{equation}
		\Delta_{\mathcal{M}_{n,p}} = \sum_{i=1}^n\sum_{j=1}^p \partial_{ij}^2 - \sum_{i,u=1}^n\sum_{j,v=1}^p X_{iv}X_{uj}\partial_{ij}\partial_{uv}  - (n-1)\sum^n_{i=1}\sum^p_{j=1} X_{ij}\partial_{ij}.
		\label{eq:lb}
	\end{equation}
\end{theorem}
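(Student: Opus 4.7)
The plan is to derive the formula by realizing the Laplace-Beltrami operator through a Parseval-frame representation based on $\{P_{uv}(X)\}_{u,v}$, and then matching the resulting extrinsic expansion term by term against the stated expression.

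The key algebraic input is the identity
\[
\sum_{u=1}^n\sum_{v=1}^p (P_{uv}(X))_{ij}(P_{uv}(X))_{kl} \;=\; \delta_{ik}\delta_{jl} - X_{il}X_{kj}, \qquad X\in\Mfd_{n,p},
\]
which I would establish by expanding the nine cross products in $P_{uv}(X)=E_{uv}-\alpha XE_{uv}^\top X-\beta XX^\top E_{uv}$, using $X^\top X=I_p$, and exploiting the calibrated choice $\alpha=\sqrt{2}/2$, $\beta=1-\sqrt{2}/2$ to kill the $XX^\top$-weighted contributions --- bookkeeping very much in the style of the table in the proof of \cref{thm:ito}. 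The same identity shows that $\{P_{uv}(X)\}_{u,v}$ is a normalized tight (Parseval) frame of $(\mathcal{T}_X\Mfd_{n,p},g^c)$, i.e.\ $\sum_{u,v}g^c(P_{uv}(X),Z_1)\,g^c(P_{uv}(X),Z_2)=g^c(Z_1,Z_2)$ for all tangent $Z_1,Z_2$.

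With the Parseval property in hand, I would invoke the general representation
\[
\Delta_{\Mfd_{n,p}} f(X) \;=\; \sum_{u,v}\Bigl[P_{uv}(X)\bigl(P_{uv}\tilde f\bigr)(X) - \bigl(\nabla^{\Mfd}_{P_{uv}(X)}P_{uv}\bigr)\tilde f\Bigr],
\]
valid for any smooth extension $\tilde f$ of $f$ to $\Real^{n\times p}$, where $\nabla^{\Mfd}$ is the canonical Levi-Civita connection. This extends the usual orthonormal-frame formula for $\Delta=\tr\,\Hess$ because the trace of any symmetric bilinear form may be computed via a Parseval frame just as via an ON basis. Expanding $P_{uv}(P_{uv}\tilde f)$ in ambient coordinates and summing over $(u,v)$, the second-order part collapses via the frame identity to precisely the first two terms $\sum_{ij}\partial_{ij}^2 - \sum_{i,u,j,v}X_{iv}X_{uj}\partial_{ij}\partial_{uv}$ of the claim. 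The first-order contribution, coming from $(P_{uv})_{kl}\partial_{kl}(P_{uv})_{ij}$, simplifies after the $(u,v)$-sum and the two combinatorial identities $\sum_u X_{ua}X_{ub}=\delta_{ab}$ and $\sum_{u,v}X_{uv}^2=p$ (the same identities that powered \cref{thm:ito}) to exactly $-(n-1)\sum_{ij}X_{ij}\partial_{ij}\tilde f$.

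What remains is the Levi-Civita correction $\sum_{u,v}\nabla^{\Mfd}_{P_{uv}(X)}P_{uv}$, which must vanish identically on $\Mfd_{n,p}$ for the two calculations to match. I would verify this via the Edelman-Arias-Smith formula for the canonical connection: compute the ambient derivative $D_{P_{uv}(X)}P_{uv}$, apply the $g^c$-appropriate tangential projection, and sum over $(u,v)$. The expectation, already borne out in the case $p=1$ where $\Mfd_{n,1}=S^{n-1}$ and $P_{u1}(X)=e_u-X_uX$ and a direct calculation gives $\sum_u D_{P_{u1}(X)}P_{u1}=-(n-1)X$ (purely normal, so the tangential sum vanishes), is that the general sum projects to $0$. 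Carrying out this vanishing cleanly for general $p$ is the main technical obstacle; the specific values $\alpha=\sqrt{2}/2$, $\beta=1-\sqrt{2}/2$ appear tuned precisely so that the tangential residue cancels. A strong consistency check is that the $(n-1)$ coefficient in the final answer matches the Ito drift $-\tfrac{n-1}{2}\sigma^2 X\,\Diff t$ of \cref{thm:ito} (taking $V\equiv 0$, $\sigma\equiv 1$), in agreement with $\tfrac{1}{2}\Delta_{\Mfd_{n,p}}$ being the generator of Brownian motion on the canonical Stiefel manifold.
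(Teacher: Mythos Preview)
Your route is genuinely different from the paper's. The paper never touches the frame $\{P_{uv}\}$ in this theorem: it first constructs an explicit $g^c$-\emph{orthonormal} basis $\{U_{ij}\}$ of $\mathcal{T}_X\Mfd_{n,p}$ from the completion $Q=[X,X_\perp]$ (\cref{lemma:orth}), and then traces the Edelman--Arias--Smith closed form for the canonical Hessian, \cref{eq:hess:c}, over that basis. Because \cref{eq:hess:c} already packages the Levi--Civita connection, no separate correction term $\sum\nabla^{\Mfd}_{V}V$ ever appears; the trace collapses directly to $\Delta_E\Fun-\sum_{i,j\leq p}\nabla^2_E\Fun(XE^p_{ij},XE^p_{ji})-(n-1)\tr(G^\top X)$, which is the stated formula. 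Your Parseval-frame approach is more SDE-native and would make the subsequent generator identification nearly tautological, but it trades that one-line Hessian trace for two nontrivial verifications you have not completed.

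On those two points. First, the Euclidean frame identity does \emph{not} by itself give the $g^c$-Parseval property: for tangent $Z=XA+X_\perp B$ (with $A$ skew) it yields $\sum_{u,v}\langle P_{uv},Z\rangle_E^2 = 2\,\tr(A^\top A)+\tr(B^\top B)$, whereas $g^c(Z,Z)=\tfrac12\tr(A^\top A)+\tr(B^\top B)$. The $g^c$-Parseval property \emph{is} true, but you must compute $g^c(P_{uv},Z)=\alpha(XA)_{uv}+(X_\perp B)_{uv}$ directly and then square-sum. Second --- and this is the real gap you flag --- the vanishing of $\sum_{u,v}\nabla^{\Mfd}_{P_{uv}}P_{uv}$ for $p>1$ is not merely a ``$g^c$-appropriate tangential projection'' of the ambient derivative: for $p>1$ the canonical metric is \emph{not} the induced Euclidean metric, so the Levi--Civita connection carries extra terms beyond tangential projection, and your sphere check does not transfer. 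The cleanest way to close both gaps is to plug $Z_1=Z_2=P_{uv}$ directly into \cref{eq:hess:c} and sum over $u,v$; at that point your argument essentially merges with the paper's, only with a Parseval frame in place of an orthonormal basis.
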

We calculate the LB operator using 
the trace of the Hessian operator along an orthonormal basis in the tangent space $\mathcal{T}_X{\mathcal{M}_{n,p}}$. First, we provide an orthonormal basis in the following lemma.
\begin{lemma}[Orthonormal basis of $\mathcal{T}_X\mathcal{M}_{n,p}$] \label{lemma:orth}
	Let $Q$ to be an extended orthogonal matrix of $X$, such that $Q\in \Real ^{n\times n}$, $Q^\top Q = I_n$ and $Q = [X,X_\perp]$. An orthonormal basis of $\mathcal{T}_X\Mfd_{n,p}$ is given by
	\[
		\begin{aligned}
			U_{ij} = & Q(E_{ij}-E_{ji}), & \quad i<j\leq p, \\
			U_{ij} = & QE_{ij},          & \quad i>p,
		\end{aligned}
	\]
	where $E_{ij}$ is the element matrix in $\Real^{n\times p}$. The set of the basis is denoted by $\Lambda$.
\end{lemma}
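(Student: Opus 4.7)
My plan is to exploit the standard parametrization $Z = XA + X_\perp B$ of tangent vectors in $\mathcal{T}_X \Mfd_{n,p}$, where $A \in \Real^{p \times p}$ is skew-symmetric and $B \in \Real^{(n-p) \times p}$ is arbitrary. Using $X^\top X = I_p$, $X_\perp^\top X = 0$, and the skew-symmetry of $A_1, A_2$, a short expansion of $g^c(Z_1, Z_2) = \tr(Z_1^\top Z_2) - \tfrac{1}{2}\tr(Z_1^\top X X^\top Z_2)$ will yield the decoupled form
\[
g^c(Z_1, Z_2) \;=\; \tfrac{1}{2}\tr(A_1^\top A_2) + \tr(B_1^\top B_2).
\]
This single identity reduces orthonormality to an elementary check on the two coordinate blocks.

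Next I will read off the block decomposition of each candidate basis vector. Splitting the $n \times p$ elementary matrix $E_{ij}$ into its top $p$ rows and bottom $n-p$ rows, and using $Q = [X, X_\perp]$, one obtains $U_{ij} = X(\tilde E_{ij} - \tilde E_{ji})$ for $i < j \le p$, and $U_{ij} = X_\perp \tilde E_{i-p,\,j}$ for $i > p$, where $\tilde E_{ab}$ denotes the elementary matrix of the appropriate size. Thus the first family corresponds to $(A,B) = (\tilde E_{ij} - \tilde E_{ji},\, 0)$ with $A$ manifestly skew, and the second family to $(A,B) = (0,\, \tilde E_{i-p,\,j})$. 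In particular every $U_{ij}$ lies in $\mathcal{T}_X \Mfd_{n,p}$.

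Orthonormality then follows from two elementary trace identities. Mixed inner products between the families vanish automatically, since one side carries $B = 0$ and the other $A = 0$. Within the second family, $\tr(\tilde E_{i-p,j}^\top \tilde E_{k-p,l}) = \delta_{ik}\delta_{jl}$ is immediate. Within the first family, one expands
\[
\tfrac{1}{2}\tr\bigl((\tilde E_{ij} - \tilde E_{ji})^\top(\tilde E_{kl} - \tilde E_{lk})\bigr) \;=\; \delta_{ik}\delta_{jl} - \delta_{il}\delta_{jk},
\]
and the constraints $i<j$, $k<l$ rule out the cross term $\delta_{il}\delta_{jk}$. Finally, the cardinality count $\binom{p}{2} + (n-p)p = np - \tfrac{p(p+1)}{2}$ matches $\dim \Mfd_{n,p}$, so the orthonormal set $\Lambda$ is in fact a basis.

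I do not anticipate any serious obstacle. The only subtle point is the factor $\tfrac{1}{2}$ in the reduced form of the canonical metric; it is precisely what forces the ``skew-pair'' normalization $E_{ij} - E_{ji}$ (rather than, say, a single $E_{ij}$ rescaled by $1/\sqrt{2}$) to yield unit norm. Keeping track of this factor when contrasting with the Euclidean-ambient convention is the one place where sign or scale errors are likely to creep in, so I will write out the reduction of $g^c$ carefully before proceeding to the block computations.
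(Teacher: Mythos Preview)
Your proposal is correct and follows essentially the same approach as the paper: both exploit that, after pulling back through $Q=[X,X_\perp]$, the canonical metric becomes $\mathrm{diag}\{\tfrac{1}{2}I_p,I_{n-p}\}$, which is exactly your identity $g^c(Z_1,Z_2)=\tfrac{1}{2}\tr(A_1^\top A_2)+\tr(B_1^\top B_2)$; the paper simply substitutes $U_{ij}=Q\tilde E_{ij}$ directly into $g^c$ rather than first recording the $(A,B)$ reduction, but the computation and the dimension count are identical.
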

\begin{proof}[Proof of \cref{lemma:orth}]
	To simplify the notation, we set $\tilde{E}_{ij} = E_{ij}-E_{ji},(i,j\leq p)$ or $\tilde{E}_{ij} = E_{ij},(i>p)$. 
The orthogonality of $Q$ indicates that $\Lambda$ is linear independent. It can be easily calculated that $\Lambda$ has $np-p(p+1)/2$ elements so that $\Lambda$ spans $\mathcal{T}_X\Mfd_{n,p}$. We next calculate the inner product as
	\[
		\begin{aligned}
			g^c(U_{ij},U_{kl})   & =\mathrm{tr}\left(\tilde{E}^\top_{ij} \mathrm{diag}\left\{\frac{1}{2}I_p,I_n\right\}\tilde{E}_{kl}\right) = 0, \quad \text{if}~ (i,j)\neq(k,l)~\text{and}~(j,i)\neq(k,l);\\
			g^c(U_{ij},U_{ij}) & = \mathrm{tr}\left(\tilde{E}^\top_{ij} \mathrm{diag}\left\{\frac{1}{2}I_p,I_n\right\}\tilde{E}_{ij}\right) =1, \quad \text{if}~ i \neq j;                    \\
		\end{aligned}
	\]
	The results above indicate that $\Lambda$ is the set of orthonormal basis of $\mathcal{T}_X\Mfd_{n,p}$.
\end{proof}
From the orthonormal basis we can calculate the LB operator $\Delta_{\Mfd_{n,p}}$.
\begin{proof}[Proof of \cref{thm:lb}]
	It has been shown in \cite{Edelman:1998ei} that
	\begin{equation}
		\begin{aligned}
			\nabla^2_{\Mfd_{n,p}} \Fun(Z_1,Z_2)= & \nabla^2_{E}\Fun(Z_1,Z_2)+\frac{1}{2}\tr(G^\top Z_1X^\top Z_2+X^\top Z_1 G^\top Z_2)             \\
			-                            & \frac{1}{2}\tr((X^\top G +G^\top X)Z^\top_1(I-XX^\top)Z_2),\ Z_1,Z_2\in \mathcal{T}_X\Mfd_{n,p},
		\end{aligned}
		\label{eq:hess:c}
	\end{equation}
	where $\nabla^2_{E}$ is the Hessian operator in the Euclidean space $\Real^{n\times p}$ and $G_{ij}=\partial_{ij}\Fun$.

	From the orthonormal basis we obtain
	\[
		\begin{aligned}
			\nabla^2_{\Mfd_{n,p}} \Fun(U_{ij},U_{ij}) = & \nabla^2_E\Fun(Q\tilde{E}_{ij},Q\tilde{E}_{ij})+\tr(G^\top Q\tilde{E}_{ij}X^\top Q\tilde{E}_{ij})                \\
			-                                   & \frac{1}{2}\tr((X^\top G+G^\top X)\tilde{E}^\top_{ij}Q^\top(I-XX^\top)Q\tilde{E}_{ij})                           \\
			=                                   & \nabla^2_E\Fun(Q\tilde{E}_{ij},Q\tilde{E}_{ij})+\tr(G^\top Q\tilde{E}_{ij}[I_p,0_{p\times (n-p)}]\tilde{E}_{ij}) \\
			-                                   & \frac{1}{2}\tr((X^\top G+G^\top X)\tilde{E}^\top_{ij}\mathrm{diag}\{0_p,I_{n-p}\}\tilde{E}_{ij}).
		\end{aligned}
	\]
	Hence, we have
	\[
		\begin{aligned}
			\Delta_{\Mfd_{n,p}} \Fun = & \sum_{U_{ij}\in \Lambda}\nabla^2_{\Mfd_{n,p}}\Fun(U_{ij},U_{ij})                                                                             \\
			=                  & \sum_{i>p}[\nabla^2_E\Fun(QE_{ij},QE_{ij})+0-\frac{1}{2}\text{tr}((X^\top G +G^\top X)E^p_{jj})]                                     \\
			+                  & \sum_{i<j\leq p}[\nabla^2_E\Fun(Q\tilde{E}_{ij},Q\tilde{E}_{ij})+\tr(G^\top Q\tilde{E}_{ij}[I_p,0_{p\times (n-p)}]\tilde{E}_{ij})-0] \\
			=                  & \sum_{i>p}[\nabla^2_E\Fun(QE_{ij},QE_{ij})-(G^\top X)_{jj}]                                                                          \\
			+                  & \sum_{i<j\leq p}[(\nabla^2_E\Fun(QE_{ij},QE_{ij})+\nabla^2_E\Fun(QE_{ji},QE_{ji})-2\nabla^2_E\Fun(QE_{ij},QE_{ji}))                  \\
			-                  & ((G^\top X)_{ii}+(G^\top X)_{jj})]                                                                                                   \\
			=                  & \sum_{i,j}\nabla^2_EF(QE_{ij},QE_{ij})-\sum_{i,j\leq p}\nabla^2_EF(QE_{ij},QE_{ji})-(n-1)\text{tr}(G^\top X).
		\end{aligned}
	\]
	Orthogonal $Q$ indicates that the linear transformation $V\in \Real^{n\times
    p}\mapsto QV \in \Real^{n\times p}$ is orthogonal (under Euclidean metric).
    Therefore,
	\[
		\begin{aligned}
			\Delta_{\Mfd_{n,p}} \Fun = & \sum_{i,j}\nabla^2_EF(QE_{ij},QE_{ij})-\sum_{i,j\leq p}\nabla^2_EF(QE_{ij},QE_{ji})-(n-1)\text{tr}(G^\top X) \\
			=                  & \Delta_E\Fun-\sum_{i,j\leq p}\nabla^2_E\Fun(XE^p_{ij},XE^p_{ji})-(n-1)\tr(G^\top X),
		\end{aligned}
	\]
	 where $E^p_{ij}$ is the element matrix in $\Real^{p\times p}$. With some expansion and mark changing, we obtain
	\[
		\Delta_{\Mfd_{n,p}}=\sum_{i=1}^n\sum_{j=1}^p \partial_{ij}^2 - \sum_{i,u=1}^n\sum_{j,v=1}^p X_{iv}X_{uj}\partial_{ij}\partial_{uv}  - (n-1)\sum^n_{i=1}\sum^p_{j=1} X_{ij}\partial_{ij},
	\]
	which completes the proof.
\end{proof}



\subsection{Extrinsic formulation of Brownian motion on the Stiefel Manifold}
We now  show that the diffusion term introduced in \cref{eq:sde:extended} is exactly the $\Mfd$-valued Brownnian motion driven by half of the Laplace-Beltrami operator. We state the result in the following theorem.
\begin{theorem}
	Suppose that $W(t)$ is the solution of the following SDE
	\begin{equation}
		\mathrm{d} W(t) = \sum_{u=1}^n\sum_{v=1}^p \left( E_{uv}-\alpha WE_{uv}^\top W - \beta WW^\top E_{uv} \right) \circ \mathrm{d} B_{uv}(t).
		\label{eq:bm:sde}
	\end{equation}
	Then $W(t)$ is driven by half of Laplacian-Beltrami operator
    $\Delta_{\Mfd_{n,p}}$ on Stiefel Manifold, i.e.,
	\[
		\frac{1}{2}\Delta_{\Mfd_{n,p}} \varphi(W(t)) = \mathcal{L}\varphi(W(t)) := \lim_{t\rightarrow 0+} \frac{\Expect [\varphi(W(t))|W(0)=w_0]-\varphi(w_0)}{t}.
	\]
\end{theorem}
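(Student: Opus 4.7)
The plan is to compute the infinitesimal generator $\mathcal{L}$ of $W(t)$ directly via It\^o's formula and match it term-by-term to the extrinsic Laplace--Beltrami operator furnished by \cref{thm:lb}. I would first invoke \cref{thm:ito} with $V \equiv 0$ and $\sigma \equiv 1$ to convert the Stratonovich SDE \cref{eq:bm:sde} to its It\^o form
\[
    \Diff W(t) = -\tfrac{n-1}{2}\,W(t)\,\Diff t + P_{W(t)}(\Diff B(t)),
\]
which is legitimate because the on-manifold computation in that proof did not use the drift term. For an arbitrary smooth $\varphi$, extended smoothly to a neighbourhood of $\Mfd_{n,p}$, It\^o's formula then gives
\[
    \mathcal{L}\varphi(W) = -\tfrac{n-1}{2}\sum_{i,j}W_{ij}\,\partial_{ij}\varphi + \tfrac{1}{2}\sum_{i,j,u,v}\partial_{ij}\partial_{uv}\varphi\cdot\Sigma_{ij,uv}(W),
\]
where $\Sigma_{ij,uv}(W)\,\Diff t = \Diff\langle W_{ij},W_{uv}\rangle$ is the quadratic covariation rate. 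The first summand already reproduces the third term of $\Delta_{\Mfd_{n,p}}$ up to the factor $\tfrac{1}{2}$, so the whole problem reduces to showing
\[
    \Sigma_{ij,uv}(W) = \delta_{iu}\delta_{jv} - W_{iv}W_{uj}, \qquad W \in \Mfd_{n,p}.
\]

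To establish this identity, write $H_{ij,st}(W) = \delta_{is}\delta_{jt} - \alpha W_{it}W_{sj} - \beta (WW^\top)_{is}\delta_{jt}$ for the $(i,j)$-entry of $P_{st}(W)$, so that $\Sigma_{ij,uv} = \sum_{s,t} H_{ij,st}H_{uv,st}$. Expanding the product yields nine elementary contributions; invoking the on-manifold relations $W^\top W = I_p$, $(WW^\top)^2 = WW^\top$, $WW^\top W = W$, together with symmetry of $WW^\top$, I would consolidate everything into three structural types and obtain
\[
    \Sigma_{ij,uv}(W) = \delta_{iu}\delta_{jv} + (2\alpha\beta - 2\alpha)\,W_{iv}W_{uj} + (\alpha^2 + \beta^2 - 2\beta)\,(WW^\top)_{iu}\delta_{jv}.
\]

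The main obstacle --- and the raison d'\^etre of the specific constants $\alpha = \sqrt{2}/2$, $\beta = 1 - \sqrt{2}/2$ introduced in \eqref{eqn:projection} --- is that one needs both $2\alpha\beta - 2\alpha = -1$ and $\alpha^2 + \beta^2 - 2\beta = 0$ to hold simultaneously. A direct substitution verifies both equalities (indeed $\alpha(1-\beta) = 1/2$ and $\alpha^2 + \beta^2 = 2 - \sqrt{2} = 2\beta$), collapsing the expression to $\delta_{iu}\delta_{jv} - W_{iv}W_{uj}$ and killing any off-tangential component. Substituting this back into the generator expression gives
\[
    \mathcal{L}\varphi(W) = \tfrac{1}{2}\Bigl[\sum_{i,j}\partial_{ij}^2\varphi - \sum_{i,j,u,v} W_{iv}W_{uj}\,\partial_{ij}\partial_{uv}\varphi - (n-1)\sum_{i,j}W_{ij}\,\partial_{ij}\varphi\Bigr],
\]
which is exactly $\tfrac{1}{2}\Delta_{\Mfd_{n,p}}\varphi(W)$ by \cref{thm:lb}. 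A minor point I would want to flag is that \cref{thm:1} guarantees $W(t)$ stays on $\Mfd_{n,p}$ almost surely, so the on-manifold algebraic identities used above suffice and the value of $\mathcal{L}\varphi$ is independent of the smooth extension chosen for $\varphi$.
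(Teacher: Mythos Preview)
Your proposal is correct and follows essentially the same route as the paper: convert to It\^o form via \cref{thm:ito}, expand the generator's second-order term as the sum $\sum_{s,t}H_{ij,st}H_{uv,st}$, collapse the nine (or six with symmetry) products using $W^\top W = I_p$ and $(WW^\top)^2 = WW^\top$, and then invoke the numerical identities $2\alpha\beta-2\alpha=-1$, $\alpha^2+\beta^2-2\beta=0$ to recover the extrinsic $\Delta_{\Mfd_{n,p}}$ of \cref{thm:lb}. Your framing through the covariation $\Sigma_{ij,uv}$ is marginally tidier than the paper's direct term-by-term listing, but the underlying algebra is identical.
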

\begin{proof}
	From \cref{thm:ito}, the Ito version of \cref{eq:bm:sde} is
	\[
		\mathrm{d} W(t) = -\frac{n-1}{2} X\mathrm{d} t + \sum_{u=1}^n\sum_{v=1}^p  (E_{uv}-\alpha WE_{uv}^\top W -\beta WW^\top E_{uv} ) \mathrm{d} B_{uv}(t).
	\]
	The generator of $\varphi$ can be derived as \cite{oksendal:2003jp}
	\[
		\begin{aligned}
			\mathcal{L}\varphi & = -\frac{n-1}{2} \sum^n_{i=1}\sum^p_{j=1} X_{ij}\partial_{ij}\varphi + \frac{1}{2} \sum_{i,u,s}^n\sum_{j,v,t}^p \left( \delta_{is}\delta_{jt} - \alpha X_{it}X_{sj} - \beta \sum_w^p X_{iw}X_{sw}\delta_{tj} \right) \\
			             & \cdot\left(\delta_{us}\delta_{vt} -\alpha X_{ut}X_{sv} -\beta \sum_{z}^p X_{uz}X_{sz}\delta_{tv} \right) \partial_{ij}\partial_{uv} \varphi.
		\end{aligned}
	\]
We expand pairwise products in the second term of the above equation as follows: 
	\[
		\sum_{i,j,u,v,s,t} (\delta_{is}\delta_{jt})(\delta_{us}\delta_{vt})(\partial_{ij}\partial_{uv}\varphi) = \sum_{i,j} \partial^2_{ij}\varphi,
	\]
	\[
		\sum_{i,j,u,v,s,t}(\delta_{is}\delta_{jt})(-\alpha X_{ut}X_{sv})(\partial_{ij}\partial_{uv}\varphi) 
		= -\alpha \sum_{i,j,u,v} X_{uj}X_{iv} (\partial_{ij}\partial_{uv} \varphi), \text{\footnotesize (twice)}
	\]
	\[
		\sum_{i,j,u,v,s,t}(\delta_{is} \delta_{jt})(-\beta \sum_{z=1}^p X_{uz} X_{sz} \delta_{tv})(\partial_{ij}\partial_{uv} \varphi) = -\beta \sum_{i,j,u,v} X_{iv}X_{uv}( \partial_{ij}\partial_{uj} \varphi), \text{\footnotesize (twice)}
	\]
	\[
		\sum_{i,j,u,v,s,t}(-\alpha X_{it}X_{sj})(-\alpha X_{ut}X_{sv})(\partial_{ij}\partial_{uv}\varphi) = \alpha^2 \sum_{i,j,u,v}  X_{iv}X_{uv} (\partial_{ij}\partial_{uj}\varphi),
	\]
	\[
		\sum_{i,j,u,v,s,t}(-\alpha X_{it}X_{sj})(-\beta \sum_{z=1}^p X_{uz}X_{sz}\delta_{tv})(\partial_{ij}\partial_{uv}\varphi) = \alpha\beta \sum_{i,j,u,v} X_{iv}X_{uj} (\partial_{ij}\partial_{uv} \varphi), \text{\footnotesize (twice)}
	\]
	\[
		\begin{aligned}
			  & \sum_{i,j,u,v,s,t} (-\beta \sum_{w=1}^p X_{iw}X_{sw} \delta_{tj})(-\beta \sum_{z=1}^p X_{uz}X_{sz} \delta_{tv}) (\partial_{ij}\partial_{uv}\varphi)    \\
			= & ~ \beta^2 \sum_{i,j,u} (XX^\top XX^\top)_{iu} (\partial_{ij}\partial_{uj}\varphi) = \beta^2 \sum_{i,j,u,v} X_{iv}X_{uv} (\partial_{ij}\partial_{uj} \varphi).
		\end{aligned}
	\]
	Hence, we have
	\[
		\begin{aligned}
		 \mathcal{L} \varphi = & -\frac{n-1}{2} \sum^n_{i=1}\sum^p_{j=1} X_{ij}\partial_{ij}\varphi + \frac{1}{2}\left[\sum_{i,j}\partial_{ij}^2\varphi + (-2\alpha + 2\alpha\beta) \sum_{i,j,u,v} X_{iv}X_{uj} \partial_{ij}\partial_{uv} \varphi \right. \\
		& \left. + (-2\beta + \alpha^2 + \beta^2)(\sum_{i,j,u,v} X_{iv}X_{uv}\partial_{ij}\partial_{uj}\varphi) \right].
		\end{aligned}
	\]
	Substituting $\alpha = \sqrt{2}/2$ and $\beta = 1-\sqrt{2}/2$ we obtain
	\[
		\mathcal{L} \varphi = -\frac{n-1}{2} \sum^n_{i=1}\sum^p_{j=1} X_{ij}\partial_{ij}\varphi + \frac{1}{2}\left[\sum_{i,j}\partial_{ij}^2\varphi - \sum_{i,j,u,v} X_{iv}X_{uj} \partial_{ij}\partial_{uv} \varphi \right] = \frac{1}{2} \Delta_{\Mfd_{n,p}} \varphi.
	\]
\end{proof}
The next  corollary is a direct extension of the above theorem.
\begin{corollary}
	The Fokker-Planck Equation of \cref{eq:sde:ext:stra} is given by
	\begin{equation}
		\frac{\partial p}{\partial t} = -\nabla_{\Mfd_{n,p}}\cdot(p \nabla_{\mathcal{M}_{n,p}} \Fun ) + \frac{1}{2}\sigma^2(t)\Delta_{\Mfd_{n,p}} p,
		\label{eq:fpe}
	\end{equation}
	where $\nabla_{\Mfd_{n,p}}$, $\nabla_{\Mfd_{n,p}}\cdot$, $\Delta_{\Mfd_{n,p}}$ represent the gradient, divergence and Laplace-Beltrami operator on the Stiefel manifold endowed with canonical metric, respectively. 
\end{corollary}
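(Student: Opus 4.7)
The plan is to derive the Fokker-Planck equation by first identifying the infinitesimal generator of the diffusion process described by \cref{eq:sde:ext:stra}, and then taking its formal $L^2$-adjoint with respect to the Riemannian volume on $\Mfd_{n,p}$ induced by the canonical metric. Since $\Mfd_{n,p}$ is a compact manifold without boundary, all boundary terms arising in the integration by parts vanish, so the adjoint can be read off cleanly.

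First I would combine the two ingredients already at hand: the drift is the tangent vector field $-\nabla_{\Mfd}\Fun$, whose contribution to the generator acting on a smooth test function $\varphi$ is $-g^c(\nabla_{\Mfd}\Fun,\nabla_{\Mfd}\varphi)$; the diffusion part, by the preceding theorem, generates the $\Mfd$-valued Brownian motion whose generator is $\tfrac{1}{2}\Delta_{\Mfd_{n,p}}$, scaled here by $\sigma^2(t)$. By linearity of the generator in the Stratonovich form (and an appeal to \cref{thm:ito} to put the drift into Ito form without altering its tangential character, since $-\nabla_\Mfd\Fun$ is already tangent and the extra Ito correction is precisely the one producing $\tfrac{1}{2}\Delta_{\Mfd_{n,p}}$), one obtains
\[
\mathcal{L}_t\varphi \;=\; -g^c(\nabla_{\Mfd_{n,p}}\Fun,\nabla_{\Mfd_{n,p}}\varphi)\;+\;\tfrac{1}{2}\sigma^2(t)\Delta_{\Mfd_{n,p}}\varphi.
\]

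Next I would pass to the Fokker-Planck (forward Kolmogorov) equation in the standard way: the density $p(X,t)$ of $X(t)$ with respect to the Riemannian volume $\mathrm{d}V_{\Mfd_{n,p}}$ satisfies $\partial_t p=\mathcal{L}_t^{*}p$, where the adjoint is taken in $L^2(\Mfd_{n,p},\mathrm{d}V_{\Mfd_{n,p}})$. Using compactness of $\Mfd_{n,p}$, for every smooth $\varphi$ the divergence theorem gives
\[
\int_{\Mfd_{n,p}} g^c(\nabla_{\Mfd_{n,p}}\Fun,\nabla_{\Mfd_{n,p}}\varphi)\,p\,\mathrm{d}V
\;=\;-\int_{\Mfd_{n,p}}\varphi\,\nabla_{\Mfd_{n,p}}\!\cdot\!(p\,\nabla_{\Mfd_{n,p}}\Fun)\,\mathrm{d}V,
\]
and $\Delta_{\Mfd_{n,p}}$ is self-adjoint, so $\int(\Delta_{\Mfd_{n,p}}\varphi)p\,\mathrm{d}V=\int\varphi\,\Delta_{\Mfd_{n,p}}p\,\mathrm{d}V$. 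Plugging both identities into $\int\mathcal{L}_t\varphi\cdot p\,\mathrm{d}V=\int\varphi\,\partial_t p\,\mathrm{d}V$ and using arbitrariness of $\varphi$ yields \cref{eq:fpe}.

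The only substantive point to verify carefully is that the Ito-to-Stratonovich correction coming from the diffusion coefficient combines with the drift to produce exactly the manifold generator written above, with no spurious terms off the manifold. This is however already guaranteed by \cref{thm:ito} together with the theorem identifying the pure diffusion part with Brownian motion on $\Mfd_{n,p}$: the drift $-\nabla_\Mfd\Fun$ is tangent, so it introduces no additional Ito correction of its own, and the diffusion correction is precisely what converts the extrinsic diffusion into the intrinsic half-Laplacian. With this, no further computation is needed and the corollary follows.
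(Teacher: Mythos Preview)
Your argument is correct and is exactly the standard route the paper implicitly invokes: the paper states the corollary as ``a direct extension of the above theorem'' and gives no proof at all, relying on the reader to pass from the generator $\mathcal{L}_t\varphi = -g^c(\nabla_{\Mfd_{n,p}}\Fun,\nabla_{\Mfd_{n,p}}\varphi)+\tfrac12\sigma^2(t)\Delta_{\Mfd_{n,p}}\varphi$ to its $L^2$-adjoint on the compact manifold. You have simply written out those details, so there is nothing to add.
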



\section{Numerical scheme of the SDE and its convergence}
\label{sec:numericalscheme}
We next provide a numerical scheme to solve the SDE \eqref{eq:sde:ext:stra}. Our idea is first projecting the random noise in the ambient space to the tangent space of the Stiefel manifold. After that, we apply the Cayley transformation similar as the method discussed in \cite{Wen:2012ga}. More precisely, we propose the following update scheme to solve the SDE \eqref{eq:sde:ext:stra}:
\begin{equation}
	\begin{cases}
		\displaystyle Z_k = -\delta_kG_k + \sigma_k(I_n-\beta Y_kY_k^\top)\delta B_k,                                                                  \\
		A_k = Z_kY_k^\top - Y_kZ_k^\top,                                                                                                                               \\
		\displaystyle
        Y_{k+1}=\left(I-\frac{A_k}{2}\right)^{-1}\left(I+\frac{A_k}{2}\right)Y_k.
        \\
	\end{cases}
	\label{eq:imp}
\end{equation}
In the case of $p=n$, we have a simpler form
\[
    Z_k = -\delta_k G_k + \alpha\sigma_k \delta B_k.
\]
In the spherical constrained case of $p=1$, we can show that
\[
	A_k = (-\delta_k G_k + \sigma_k \delta B_k) Y_k^\top - Y_k(-\delta_k G_k + \sigma_k \delta B_k)^\top.
\]

We point out that there is an efficient way to compute $Y_{k+1}$ in the case of $p<n/2$ or $p=1$ based on the Sherman-Morrison-Woodbury formula similar to the way discussed in \cite{Wen:2012ga}.

\begin{lemma}[\cite{Wen:2012ga}]
	\label{lm:faster}
	\begin{enumerate}[(1)]
		\item Rewrite $A_k=U_kV_k^\top$ for $U_k=[Z_k,Y_k]$ and $V_k=[Y_k,-Z_k]$. If $I-\frac{1}{2}V^\top_kU_k$ is invertible, then
		      \begin{equation}
		      	Y_{k+1}=Y_k+U_k\left(I-\frac{1}{2}V^\top_kU_k\right)^{-1}V^\top_kY_k.
		      \end{equation}
		\item For the vector case,
		      \begin{equation}
		      	Y_{k+1} = Y_k + \frac{Z_k}{1-(\frac{1}{2})^2(Z_k^\top Y_k)^2+(\frac{1}{2})^2Z_k^\top Z_k}-\frac{Z_k^\top Y_k - \frac{1}{2}((Z_k^\top Y_k)^2)+Z_k^\top Z_k}{1-(\frac{1}{2})^2(Z_k^\top Y_k)^2+(\frac{1}{2})^2Z_k^\top Z_k}Y_k.
		      \end{equation}
	\end{enumerate}
\end{lemma}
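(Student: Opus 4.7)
The plan is to derive both formulas as direct consequences of the Sherman–Morrison–Woodbury (SMW) identity applied to the Cayley update $Y_{k+1}=(I-A_k/2)^{-1}(I+A_k/2)Y_k$ in \eqref{eq:imp}. The key observation, inherited from the construction in \cite{Wen:2012ga}, is that $A_k=Z_kY_k^\top-Y_kZ_k^\top$ is skew-symmetric of rank at most $2p$, so it admits the low-rank factorization
\begin{equation*}
A_k \;=\; U_kV_k^\top, \qquad U_k=[Z_k,\,Y_k]\in\Real^{n\times 2p},\quad V_k=[Y_k,\,-Z_k]\in\Real^{n\times 2p}.
\end{equation*}
A direct multiplication check (writing $[Z_k,Y_k]\begin{smallmatrix}[Y_k^\top\\-Z_k^\top]\end{smallmatrix}=Z_kY_k^\top-Y_kZ_k^\top$) confirms the identity; this replaces inversion of an $n\times n$ matrix by inversion of a $2p\times 2p$ matrix, which is the whole point of the lemma.

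For part~(1), the first step is the algebraic identity $(I-B)^{-1}(I+B)=2(I-B)^{-1}-I$, applied with $B=A_k/2$. This reduces the update to $Y_{k+1}=2(I-A_k/2)^{-1}Y_k - Y_k$, so only the inverse $(I-A_k/2)^{-1}$ matters. Next I apply SMW in the form $(I-\tilde U V^\top)^{-1}=I+\tilde U(I-V^\top\tilde U)^{-1}V^\top$ with $\tilde U=U_k/2$, giving
\begin{equation*}
(I-\tfrac{1}{2}A_k)^{-1} \;=\; I + \tfrac{1}{2}U_k\bigl(I-\tfrac{1}{2}V_k^\top U_k\bigr)^{-1}V_k^\top.
\end{equation*}
Invertibility of $I-\tfrac12V_k^\top U_k$ is the hypothesis of the lemma, and invertibility of $I-A_k/2$ follows automatically from skew-symmetry of $A_k$, so both inverses are meaningful. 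Substituting into $Y_{k+1}=2(I-A_k/2)^{-1}Y_k-Y_k$ and collecting terms yields the asserted formula $Y_{k+1}=Y_k+U_k(I-\tfrac12V_k^\top U_k)^{-1}V_k^\top Y_k$.

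For part~(2) I would specialize $p=1$, in which case $U_k,V_k$ are $n\times 2$ and the ``inner'' object $I-\tfrac12 V_k^\top U_k$ is a $2\times 2$ matrix. Using the spherical constraint $Y_k^\top Y_k=1$ one computes
\begin{equation*}
V_k^\top U_k=\begin{pmatrix} Y_k^\top Z_k & 1 \\ -Z_k^\top Z_k & -Z_k^\top Y_k\end{pmatrix},\qquad V_k^\top Y_k=\begin{pmatrix}1\\ -Z_k^\top Y_k\end{pmatrix},
\end{equation*}
and then applies the standard $2\times 2$ cofactor inversion formula. The determinant comes out to $1-(Z_k^\top Y_k)^2/4+Z_k^\top Z_k/4$, which is exactly the denominator appearing in the statement. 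Multiplying $(I-\tfrac12V_k^\top U_k)^{-1}V_k^\top Y_k$ out and then left-multiplying by $U_k=[Z_k,Y_k]$ gives a linear combination of $Z_k$ and $Y_k$ whose coefficients can be read off to match the displayed scalar formula.

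Neither step is a genuine obstacle: part~(1) is essentially bookkeeping for SMW plus the Cayley identity, and part~(2) is a routine $2\times 2$ inversion using $Y_k^\top Y_k=1$. The only mildly delicate point is to make sure the invertibility of $I-A_k/2$ (needed for \eqref{eq:imp} to make sense at all) is separated from the weaker hypothesis of part~(1) — the former is automatic from skew-symmetry, while the latter is what the lemma asks the user to check; I would flag this distinction briefly before doing the SMW manipulation.
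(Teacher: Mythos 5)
Your derivation is correct and is exactly the route the paper intends: the paper gives no proof of this lemma (it is cited from \cite{Wen:2012ga}), and the sentence preceding it explicitly attributes the speedup to the Sherman--Morrison--Woodbury formula applied to the Cayley update, which is what you do. Your separation of the two invertibility conditions is also sound, since $\det(I-\tfrac12 A_k)=\det(I-\tfrac12 V_k^\top U_k)$ makes the part~(1) hypothesis automatic once skew-symmetry guarantees the left-hand side is nonzero.

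One substantive remark: carrying your part~(2) computation to the end gives the coefficient of $Y_k$ as
\[
-\frac{Z_k^\top Y_k-\tfrac12 (Z_k^\top Y_k)^2+\tfrac12 Z_k^\top Z_k}{1-\tfrac14 (Z_k^\top Y_k)^2+\tfrac14 Z_k^\top Z_k},
\]
i.e.\ with a factor $\tfrac12$ on the $Z_k^\top Z_k$ term in the numerator, whereas the displayed formula in the lemma has $Z_k^\top Z_k$ there without the $\tfrac12$. Your version is the correct one: taking $n=2$, $Y_k=e_1$, $Z_k=ze_2$ (so $Z_k^\top Y_k=0$), the exact Cayley update gives first component $(1-z^2/4)/(1+z^2/4)$, which matches your formula but not the displayed one. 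So the discrepancy is a misplaced parenthesis in the paper's statement (the intended numerator is $Z_k^\top Y_k+\tfrac12(Z_k^\top Z_k-(Z_k^\top Y_k)^2)$), not an error in your argument.
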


The numerical scheme can now be summarized in \cref{alg:ns}.
\begin{algorithm}
	\caption{Numerical Scheme of the SDE}
	\label{alg:ns}
	\begin{algorithmic}[1]
		\REQUIRE {Diffusion strength $\sigma(t)$, time discretization $t_0=\tau_0<\tau_1<\dots < \tau_K = T$, initial point $Y_0=X(t_0)$};
        \STATE Let $\delta_k=\tau_{k+1}-\tau_k$, $\sigma_k=\sigma(\tau_k)$ and $G_k=\nabla_E\Fun(Y_k)$ for simplification;
        \STATE Generate a series of $n$-by-$p$ independent random matrices $\{\delta B_k\}_{k=0}^{K-1}$ , the entries of which are independent $N(0,\delta_k)$ Gaussian variables;
        \FOR {$k=0:K-1$}
        \STATE Generate $Y_{k+1}$ from the update scheme \cref{eq:imp}.
        \ENDFOR
        \STATE We consider $Y_k$ to be an appropriate approximation of $X(\tau_k)$.
	\end{algorithmic}
\end{algorithm}

Now we state the strong convergence result. For simplicity, we state and prove the result in the case of constant $\sigma(t) \equiv \sigma_0$. Similar result can be proved with variational $\sigma(t)$ under trivial changes.
\begin{theorem}[Half Order Strong Convergence]
	Denote $X(T)$ as a solution of the SDE \eqref{eq:sde:ext:stra} and let
    $\delta = \max_{k}\{\delta_k\}$. Then there exists a positive constant $C = C(T)$ independent of $\delta$, as well as a constant $\delta_0 > 0$ such that
	\begin{equation}
		\mathbb{E}\|X(T) - Y_K\|_2^2 \leq C\delta, \quad \forall \delta \in
        (0,\delta_0).
	\end{equation}
\end{theorem}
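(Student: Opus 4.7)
The plan is to reduce the convergence question to the well-known strong order-$\tfrac{1}{2}$ convergence of the It\^o Euler--Maruyama (EM) scheme applied to the extended SDE \cref{eq:sde:extended:ito}. By \cref{thm:1}(a) the extended drift and diffusion coefficients are globally Lipschitz, so the classical EM theorem already gives $\mathbb{E}\|X(T)-Y^{EM}_K\|^2=O(\delta)$ for the It\^o--EM iterates $Y^{EM}_K$ started at the same $Y_0$. It therefore suffices to compare one Cayley step \cref{eq:imp} against one It\^o--EM step and verify the two standard one-step consistency criteria---conditional mean bias of size $O(\delta_k^2)$ and conditional mean-square deviation of size $O(\delta_k^2)$. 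A discrete Gronwall bound then upgrades the local discrepancy into $\mathbb{E}\|Y^{EM}_K-Y_K\|^2=O(\delta)$, and the triangle inequality concludes the proof.

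The heart of the argument is a Taylor expansion of the Cayley transform. On the event $\{\|A_k\|<2\}$ the Neumann series gives
\[
  (I-A_k/2)^{-1}(I+A_k/2)Y_k \;=\; Y_k+A_k Y_k+\tfrac{1}{2}A_k^2 Y_k+\tfrac{1}{4}A_k^3 Y_k+O(\|A_k\|^4).
\]
Using $Y_k^\top Y_k=I_p$ and the identity $1-\beta=\alpha$, a short computation reduces the linear term to $A_k Y_k=-\delta_k\nabla_{\mathcal{M}}\mathcal{F}(Y_k)+\sigma_k P_{Y_k}(\delta B_k)$, reproducing the Stratonovich EM increment exactly. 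The quadratic term $\tfrac{1}{2}A_k^2 Y_k$ is quadratic in $Z_k$; taking conditional expectation and applying the It\^o isometry $\mathbb{E}[\delta B_{uv}\,\delta B_{u'v'}]=\delta_k\,\delta_{uu'}\delta_{vv'}$ produces a combinatorial sum of products of $\alpha$, $\beta$ and Kronecker deltas directly analogous to the table that already appears in the proof of \cref{thm:ito}. With the specific constants $\alpha=\sqrt{2}/2$ and $\beta=1-\sqrt{2}/2$ this sum collapses exactly to the It\^o correction $-\tfrac{n-1}{2}\sigma_k^2 Y_k\,\delta_k$, so the Cayley scheme matches the It\^o--EM scheme in conditional expectation up to $O(\delta_k^2)$.

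The remaining work is quantitative. Gaussian moment bounds $\mathbb{E}\|\delta B_k\|^{2m}\leq C_m\delta_k^m$ together with compactness of $\mathcal{M}_{n,p}$ and uniform boundedness of $G_k$ on the support of interest give $\mathbb{E}\|A_k\|^{2m}\leq C_m\delta_k^m$; hence the fluctuations of $\tfrac{1}{2}A_k^2 Y_k$ around its conditional mean are $O(\delta_k^2)$ in mean-square, and the cubic term $\tfrac{1}{4}A_k^3 Y_k$ contributes $O(\delta_k^3)$ in mean-square (its conditional mean vanishes at leading order because odd moments of the Gaussian are zero). The bad event $\{\|A_k\|\geq 2\}$ is absorbed by a subexponential Gaussian tail bound, contributing $o(\delta_k^3)$ once $\delta_k<\delta_0$ is small enough. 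In my view the principal obstacle is not the Gronwall step nor the tail control, but the delicate bookkeeping in the expansion of $A_k^2 Y_k$: the constants $\alpha,\beta$ and the use of $Y_k^\top Y_k=I_p$ must cooperate precisely to reproduce the It\^o drift correction from \cref{thm:ito}. A secondary nuisance is making the Neumann expansion of the implicit factor $(I-A_k/2)^{-1}$ rigorous, which requires a truncation argument before the moment estimates can be invoked.
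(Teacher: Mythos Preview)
Your strategy is sound and leads to the same conclusion, but it is organized differently from the paper's proof. The paper compares the Cayley iterates $Y_k$ directly against the continuous solution $X(t)$: it defines $R(t)=\sup_{s\le t}\mathbb{E}\|X(s)-Y_{k_s}\|_F^2$, writes $X(s)-Y_{k_s}$ in It\^o integral form, splits the resulting expression into seven pieces by Cauchy--Schwarz, bounds the scheme-dependent pieces by $C\delta$ and the Lipschitz pieces by $\int_0^t R(\tau)\,\mathrm{d}\tau$, and closes with the continuous Gronwall inequality. You instead insert the It\^o--Euler--Maruyama scheme as an intermediary and invoke the classical EM theorem as a black box, then do a one-step consistency comparison and a discrete Gronwall. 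Both routes hinge on exactly the same algebraic core: the identities $A_kY_k=-\delta_k\nabla_{\mathcal{M}}\mathcal{F}(Y_k)+\sigma_k P_{Y_k}(\delta B_k)$ and $\mathbb{E}[\tfrac{1}{2}A_k^2Y_k\mid Y_k]=-\tfrac{n-1}{2}\sigma_k^2\,\delta_k\,Y_k$, the latter being precisely the tabular computation from the proof of \cref{thm:ito} redone with $\delta B_k$ in place of $\mathrm{d}B$.

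One point where the paper is cleaner than your proposal: rather than a Neumann expansion of $(I-A_k/2)^{-1}$ on the event $\{\|A_k\|<2\}$ followed by a Gaussian tail argument for the complement, the paper iterates the \emph{exact} implicit relation $Y_{k+1}-Y_k=\tfrac{1}{2}A_k(Y_k+Y_{k+1})$ three times to obtain
\[
  Y_{k+1}-Y_k \;=\; A_kY_k+\tfrac{1}{2}A_k^2Y_k+\tfrac{1}{8}A_k^3(Y_k+Y_{k+1}),
\]
which holds without any smallness restriction because $A_k$ is skew-symmetric and hence $I-\tfrac{1}{2}A_k$ is always invertible. Since both $Y_k$ and $Y_{k+1}$ lie on the compact manifold, the remainder $\tfrac{1}{8}A_k^3(Y_k+Y_{k+1})$ is bounded in mean-square by a constant times $\mathbb{E}\|A_k\|^6=O(\delta_k^3)$ directly, with no truncation needed. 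This shortcut would simplify your write-up and removes the ``secondary nuisance'' you flagged.
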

\begin{proof}
	Without loss of generality, we suppose $t_0 = 0$. For $0 = t_0 \leq t \leq
    T$, we define
	\begin{equation}
		R(t) := \sup_{0 \leq s \leq t}\mathbb{E}\|X(s)-Y_{k_s}\|_F^2,
		\label{eq:def:residue}
	\end{equation}
	where $k_t$ is the largest integer $k$ for which $\tau_k$ does not exceed
    $t$, i.e.,
	\[
		k_t := \max\{k = 0,1,\dots,K: \tau_k \leq t\}.
	\]
	Rewriting \eqref{eq:sde:ext:stra} into an Ito integral form yields
	\[
		X(s) - X(0) = \int_0^s\left[-\nabla_{\mathcal{M}}f(X(\tau))-\frac{n-1}{2}\sigma_0^2 X(\tau)\right]\mathrm{d}~\tau + \sigma_0 \int_0^s \sum_{u,v} P_{uv}(X(\tau)) \mathrm{d} B(\tau).
	\]
	Substituting into \eqref{eq:def:residue} and applying the Schwarz inequality yields
	\begin{equation}
		\begin{aligned}
			     & R(t) =  \sup_{0 \leq s \leq t} \mathbb{E} \left\| \sum_{k=0}^{k_s - 1} (Y_{k+1}-Y_k) \right.                                                                                                                    \\
			     & \left.+ \int_{0}^s \left[\nabla_{\mathcal{M}}\Fun(X(\tau))+\frac{n-1}{2}\sigma_0^2 X(\tau) \right] \Diff \tau - \sigma_0\int_0^s P_{X(\tau)}(\Diff B(\tau)) \right\|_F^2                                                    \\
			\leq & ~ 7\sup_{0 \leq s \leq t} \left \{ \mathbb{E} \left\| \sum_{k=0}^{k_s-1} \left[\Expect (Y_{k+1}-Y_k|Y_k) - \delta_k\left(-\nabla_{\mathcal{M}}\Fun (Y_k) - \frac{n-1}{2}\sigma_0^2 Y_k \right) \right] \right\|_F^2 \right. \\
			     & + \Expect \left\| \sum_{k=0}^{k_s-1} \left[ Y_{k+1}-Y_k - \Expect(Y_{k+1}-Y_k|Y_k)-\sigma_0 P_{Y_k}(\delta B_k)  \right] \right\|_F^2                                                                           \\
			     & + \Expect \left\|\sum_{k=0}^{k_s-1}\left[ \int_{\tau_k}^{\tau_{k+1}}[\nabla_{\mathcal{M}}\Fun(X(\tau))]\Diff \tau -\delta_k\nabla_{\mathcal{M}}\Fun(Y_{\tau_k})\right]\right\|_F^2                                                    \\
			     & + \Expect \left\|\sum_{k=0}^{k_s-1}\left[ \int_{\tau_k}^{\tau_{k+1}}\left(\frac{n-1}{2}\sigma_0^2X(\tau)\right)\Diff \tau -\delta_k\left(\frac{n-1}{2}\sigma_0^2Y_{\tau_k}\right)\right]\right\|_F^2            \\
			     & + \Expect \left\|\sigma_0\sum_{k=0}^{k_s-1}\left[\int_{\tau_k}^{\tau_{k+1}}P_{X(\tau)}(\Diff B(\tau)) - P_{Y_k}(\delta B_k)\right] \right\|_F^2                                                                 \\
			     & + \Expect \left\|\int_{\tau_{k_s}}^{s}\left(\nabla_{\mathcal{M}}\Fun(X(\tau))+\frac{n-1}{2}\sigma_0^2X(\tau)\right) \Diff \tau \right\|_F^2                                                                                \\
			     & + \left. \Expect \left\|\sigma_0\int_{\tau_{k_s}}^{s}P_{X(\tau)}(\Diff B(\tau))\right\|_F^2 \right\}
		\end{aligned}
		\label{eq:seventerm}
	\end{equation}
	We next analyze the seven terms of \eqref{eq:seventerm} in order. Iterating \eqref{eq:imp} repeatedly yields
	\begin{equation}
		Y_{k+1} - Y_k = A_kY_k + \frac{1}{2}A_k^2Y_k +
        \frac{1}{8}A_k^3\left(Y_k+Y_{k+1}\right).
	\end{equation}
	Direct calculus shows that
	\begin{equation}
		\begin{aligned}
			A_kY_k & = Z_k - Y_kZ_k^\top Y_k                                                                                                    \\
			       & = -\delta_k(G_k-Y_kG_k^\top Y_k) + \sigma_0[(I-\beta Y_kY_k^\top)\delta B_k - Y_k\delta B_k^\top (I-\beta Y_kY_k^\top)Y_k] \\
			       & = -\delta_k(\nabla_{\mathcal{M}}\Fun(Y_k)) + \sigma_0(\delta B_k - \alpha Y_k\delta B_k^\top Y_k - \beta Y_kY_k^\top \delta B_k)       \\
			       & = -\delta_k(\nabla_{\mathcal{M}}\Fun(Y_k)) + \sigma_0 P_{Y_k}(\delta B_k),
		\end{aligned}
	\end{equation}
	and
	\begin{equation}
		\begin{aligned}
			A_k^2Y_k & = (Z_kY_k^\top - Y_kZ_k^\top)A_k Y_k                                                                                                   \\
			         & = [\delta_k (-G_kY_k^\top + Y_kG_k^\top)+ \sigma_0((I-\beta Y_kY_k^\top)\delta B_kY_k^\top -Y_k\delta B_k^\top (I-\beta Y_kY_k^\top))] \\
			         & \cdot [-\delta_k \nabla_{\mathcal{M}} \Fun(Y_k)+ \sigma_0 P_{Y_k}(\delta B_k)].
		\end{aligned}
	\end{equation}
	We claim that
	\begin{equation}
		\Expect\{[(I-\beta Y_kY_k^\top)\delta B_kY_k^\top - Y_k\delta B_k^\top(I-\beta Y_kY_k^\top)](P_{Y_k}(\delta B_k))|Y_k\} = -(n-1) \delta_k Y_k.
		\label{eq:claimexpect}
	\end{equation}
	In fact, we can show that
	\begin{equation}
		\begin{aligned}
			         & \Expect\{[(I-\beta Y_kY_k^\top)\delta B_kY_k^\top - Y_k\delta B_k^\top(I-\beta Y_kY_k^\top)](P_{Y_k}(\delta B_k))|Y_k\}   \\
			=~       & \Expect\{(\delta B_kY_k^\top-\beta Y_kY_k^\top\delta B_kY_k^\top-Y_k\delta B_k^\top+\beta Y_k\delta B_k^\top Y_kY_k^\top) \\
			\cdot ~ & (\delta B_k-\alpha Y_k \delta B_k^\top Y_k-\beta Y_kY_k^\top\delta B_k)|Y_k\}.
		\end{aligned}
	\end{equation}
	Let $Q_k=[Y_k,Y_{k}^\perp]$ and $N_k = Q_k^\top \delta B_k$, and one can show that the entries of $N_k$ are independent $N(0,\delta_k)$ variables. By substituting $\delta B_k=Q_k N_k$ back into the above equation and expanding the corresponding terms, we have
	\[
		\Expect\{Q_kN_kY^\top_kQ_kN_k|Y_k\}=\Expect\{Q_kN_k[I_p,0]N_k|Y_k\}=\delta_kQ_k\left(
		\begin{matrix}
			I_p               \\
			0
		\end{matrix}
		\right)=\delta_kY_k,
	\]
	\[
		-\alpha \Expect\{Q_kN_kY^\top_k Y_kN^\top_kQ_k^\top Y_k|Y_k\}=-\delta_k\alpha p Q_kI_nQ_k^\top Y_k=-\delta_k\alpha pY_k,
	\]
	\[
		-\beta\Expect\{Q_kN_kY^\top_k Y_kY_k^\top Q_kN_k|Y_k\}=-\beta\Expect\{Q_kN_kI_p[I_p,0]N_k|Y_k\}=-\delta_k\beta Y_k,
	\]
	\[
		-\beta\Expect\{Y_kY_k^\top Q_kN_kY_k^\top Q_kN_k|Y_k\}=-\beta\Expect\{Y_k[I_p,0]N_k[I_p,0]N_k|Y_k\}=-\delta_k\beta Y_k,
	\]
	\[
		\alpha\beta\Expect\{Y_kY_k^\top Q_kN_kY_k^\top Y_kN^\top_kQ_k^\top Y_k|Y_k\}=\alpha\beta\Expect\{Y_k[I_p,0]N_kN_k^\top\left(
		\begin{matrix}
			I_p \\
			0
		\end{matrix}
		\right)|Y_k\}=\delta_k\alpha\beta pY_k,
	\]
	\[
		\beta^2\Expect\{Y_kY_k^\top Q_kN_kY_k^\top Y_kY_k^\top Q_kN_k|Y_k\}=\beta^2\Expect\{Y_k[I_p,0]N_k[I_p,0]N_k|Y_k\}=\delta_k\beta^2 Y_k,
	\]
	\[
		-\Expect\{Y_kN_k^\top Q_k^\top Q_kN_k|Y_k\}=-\delta_knY_k,
	\]
	\[
		\alpha\Expect\{Y_kN_k^\top Q_k^\top Y_kN^\top_kQ_k^\top Y_k|Y_k\}=\alpha\Expect\{Y_kN_k^\top\left(
		\begin{matrix}
			I_p \\
			0
		\end{matrix}
		\right)N_k^\top\left(
		\begin{matrix}
			I_p \\
			0
		\end{matrix}
		\right)|Y_k\}=\delta_k\alpha Y_k,
	\]
	\[
		\beta\Expect\{Y_kN_k^\top Q_k^\top Y_kY_k^\top Q_kN_k|Y_k\}=\beta\Expect\{Y_kN_k^\top \mathrm{diag}\{I_p,0_{n-p}\}N_k|Y_k\}=\delta_k\beta p Y_k,\ \text{\footnotesize (twice)},
	\]
	\[\begin{aligned}
		&-\alpha\beta\Expect\{Y_kN_k^\top Q_k^\top Y_kY_k^\top Y_kN^\top_kQ_k^\top Y_k|Y_k\}\\
		=&-\alpha\beta\Expect\{Y_kN_k^\top\left(
		\begin{matrix}
			I_p \\
			0
		\end{matrix}
		\right)N_k^\top\left(
		\begin{matrix}
			I_p \\
			0
		\end{matrix}
		\right)|Y_k\}=-\delta_k\alpha\beta Y_k,\\
		&-\beta^2\Expect\{Y_kN_k^\top Q_k^\top Y_kY_k^\top Y_kY_k^\top Q_kN_k|Y_k\}\\
		=&-\beta^2\Expect\{Y_kN_k^\top \mathrm{diag}\{I_p,0_{n-p}\}N_k|Y_k\}=-\delta_k\beta^2 p Y_k.
		\end{aligned}
	\]
	Taking sum of the above terms yields \cref{eq:claimexpect}. A direct corollary of \cref{eq:claimexpect} is
	\begin{equation}
		\Expect \left\|\Expect (Y_{k+1}-Y_k|Y_k) - \delta_k\left(-\nabla_{\mathcal{M}} \Fun (Y_k) -\frac{n-1}{2}\sigma_0^2Y_k\right) \right\|_F^2 \leq C_1'\delta_k^3.
	\end{equation}
	Hence, we can derive an estimation of the first term in \cref{eq:seventerm} as
	\begin{equation}
		\begin{aligned}
			     & ~ \mathbb{E} \left\| \sum_{k=0}^{k_s-1} \left[\Expect (Y_{k+1}-Y_k|Y_k) - \delta_k\left(-\nabla_{\mathcal{M}}\Fun (Y_k) - \frac{n-1}{2}\sigma_0^2 Y_k \right) \right] \right\|_F^2                                         \\
			\leq & ~ \delta\left(\sum_{k=0}^{k_s-1} \delta_k\right) \sum_{k=0}^{k_s-1}\frac{1}{\delta_k^2} \Expect \left\|\Expect (Y_{k+1}-Y_k|Y_k) - \delta_k \left(- \nabla_{\mathcal{M}} \Fun (Y_k) - \frac{n-1}{2}\sigma_0^2Y_k\right) \right\|_F^2 \\
			\leq & ~ \delta \left(\sum_{k=0}^{k_s-1}\delta_k\right)  \left(\sum_{k=0}^{k_s-1}C'_1 \delta_k\right) \leq C_1 \delta.
		\end{aligned}
		\label{eq:term:1}
	\end{equation}
	The first inequality of \cref{eq:term:1} is due to Cauchy-Schwartz inequality and $\delta = \max\{\delta_k\}$. 
	
	The second term of \cref{eq:seventerm} can be evaluated in view that all the cross-product terms vanish under the Frobenius norm:
	\begin{equation}
		\begin{aligned}
			     & ~\Expect \left\| \sum_{k=0}^{k_s-1} \left[ Y_{k+1}-Y_k - \Expect(Y_{k+1}-Y_k|Y_k)-\sigma_0 P_{Y_k}(\delta B_k)  \right] \right\|_F^2 \\
			=    & \sum_{k=0}^{k_s-1} \Expect \|[ Y_{k+1}-Y_k - \Expect(Y_{k+1}-Y_k|Y_k)-\sigma_0 P_{Y_k}(\delta B_k) ] \|_F^2                          \\
			\leq & \sum_{k=0}^{k_s-1} (\Expect \|A_kY_k -\sigma_0 P_{Y_k}(\delta B_k)\|_F^2 + C_2'\delta_k^2)                                           \\
			\leq & \sum_{k=0}^{k_s-1} C_2 \delta_k^2 \leq C_2\delta.
		\end{aligned}
		\label{eq:term:2}
	\end{equation}
	
	The third term of \cref{eq:seventerm} can be estimated in view of the smoothness of $\nabla_{\mathcal{M}} \Fun$:
	\begin{equation}
		\begin{aligned}
			     & ~\Expect \left\|\sum_{k=0}^{k_s-1}\left[ \int_{\tau_k}^{\tau_{k+1}}[-\nabla_{\mathcal{M}}\Fun(X(\tau))]\Diff \tau + \delta_k\nabla_{\mathcal{M}}\Fun(Y_{k_\tau})\right]\right\|_F^2 \\
			=    & ~ \Expect \left\|\int_0^{\tau_{k_s}} \left[\nabla_{\mathcal{M}} \Fun (Y_{k_\tau}) -\nabla_{\mathcal{M}} \Fun(X(\tau)) \right]\Diff \tau \right\|_F^2                                \\
			\leq & ~ T \int_0^{\tau_{k_s}} \Expect \| [\nabla_{\mathcal{M}} \Fun(Y_{k_\tau}) -\nabla_{\mathcal{M}} \Fun(X(\tau))] \|_F^2 \Diff \tau                                                    \\
			\leq & ~ TC_3' \int_0^{\tau_{k_s}} \Expect \|X(\tau)-Y_{k_\tau}\|_F^2 \Diff \tau                                                                                   \\
			=    & ~ TC_3' \int_0^{\tau_{k_s}} R(\tau) \Diff \tau \leq TC_3' \int_0^t R(\tau) \Diff \tau := C_3 \int_0^t R(\tau) \Diff \tau.
		\end{aligned}
	\end{equation}
	Similarly one can show that the forth term of \cref{eq:seventerm} can be bounded by
	\begin{equation}
		\Expect \left\|\sum_{k=0}^{k_s-1}\left[ \int_{\tau_k}^{\tau_{k+1}}\left(\frac{n-1}{2}\sigma_0^2X(\tau)\right)\Diff \tau -\delta_k\left(\frac{n-1}{2}\sigma_0^2Y_{k_\tau}\right)\right]\right\|_F^2	\leq C_4 \int_0^t R(\tau) \Diff \tau.
	\end{equation}

	The fifth term of \cref{eq:seventerm} can be estimated using Ito's isometry and the smoothness of $P_{uv}$:
	\begin{equation}
		\begin{aligned}
			     & ~\Expect \left\|\sigma_0\sum_{k=0}^{k_s-1}\left[\int_{\tau_k}^{\tau_{k+1}}P_{X(\tau)}(\Diff B(\tau)) - P_{Y_k}(\delta B_k)\right] \right\|_F^2              \\
			=    & ~\sigma_0^2 \Expect \left\| \sum_{u,v} \int_0^{\tau_{k_s}} [P_{uv}(X(\tau)-P_{uv}(Y_{k_\tau})] \Diff B_{uv}(\tau) \right\|_F^2                              \\
			=    & ~\sigma_0^2 \sum_{u,v} \int_0^{\tau_{k_s}} \Expect \|P_{uv}(X_{\tau}) - P_{uv}(Y_{k_\tau}) \|_F^2 \Diff \tau                                                \\
			\leq & ~ C_5 \int_0^{\tau_{k_s}} \Expect \|X(\tau) - Y_{k_\tau}\|_F^2 \Diff \tau = C_5 \int_0^{\tau_{k_s}} R(\tau) \Diff \tau \leq C_5 \int_0^t R(\tau) \Diff \tau.
		\end{aligned}
	\end{equation}

	The last two terms can be estimated as
	\begin{equation}
		\Expect \left\|\int_{\tau_{k_s}}^{s}\left(-\nabla_{\mathcal{M}}\Fun(X(\tau))-\frac{n-1}{2}\sigma_0^2X(\tau)\right) \Diff \tau \right\|_F^2
		\leq C_6 \delta^2.
		\label{eq:term:6}
	\end{equation}
	and
	\begin{equation}
		\Expect \left\|\sigma_0\int_{\tau_{k_s}}^{s}P_{X(\tau)}(\Diff B(\tau))\right\|_F^2 \leq C_7 \delta.
		\label{eq:term:7}
	\end{equation}

	Taking the above estimation together yields
	\begin{equation}
		R(t) \leq 7\left[(C_1+C_2+C_7) \delta + C_6 \delta^2 + (C_3+C_4+C_5) \int_0^t R(\tau) \Diff \tau \right].
	\end{equation}

	It follows directly from the Gronwall inequality that
	\begin{equation}
		R(t) \leq C\delta,  \quad \delta\in(0,\delta_0),
	\end{equation}
	where $C$ is A constant independent of $\delta$ and $\delta_0 > 0$.
\end{proof}

\section{IDDM Algorithm and Global convergence Analysis}
\label{sec:IDDM}

Now we can generalize existing methods based on diffusion equation \cref{eq:sde:stfmfd} to the Stiefel Manifold. The method we use in the following is a generalization of Intermittent Diffusion (ID) \cite{CHOW:2013jm}, namely Intermittent Diminishing Diffusion on Stiefel Manifold (IDDM), in which the diffusion strength is diminishing in every single cycle.


\begin{algorithm}
	\caption{Intermittent Diminishing Diffusion on Stiefel Manifold (IDDM)}
	\label{alg:iddm}
	\begin{algorithmic}[1]
		\REQUIRE {Maximum number of cycles $N$, diffusion strength $\sigma_n$, diffusion time (in one cycle) $T_n$, initial point $X_0$ (usually random selected)};
		\STATE $X_{\text{opt}} \leftarrow X_0, k \leftarrow 0$;
		\WHILE {Terminal conditions not satisfied}
		\IF {$k \geq N$}
		\STATE {\textbf{break}}
		\ENDIF
		\STATE Numerically solve \cref{eq:sde:ext:stra} by \cref{alg:ns} starting from $X_k$ using time $T_k$ and diffusion strength $\sigma(t)=\sigma_k$ to obtain $X'_{k+1}$;\\
		\STATE Solve $\Diff X_t=-\nabla_\Mfd \Fun(X_t) \Diff t$ by local
        algorithm starting from $X'_{k+1}$ untill convergence and get $X_{k+1}$;
		\IF {$f(X_{k+1})<f(X_{\text{opt}})$}
		\STATE $X_{\text{opt}} \leftarrow X_{k+1}$;
		\ENDIF
		\STATE $k \leftarrow k+1$;
		\ENDWHILE
	\end{algorithmic}
\end{algorithm}

We first notice that the method can also be viewed as selecting 
\begin{equation}
  \label{eq:sigma-up}\sigma(t) = \sum^N_{i=1}\sigma_i I_{[S_i,S_i+T_i]}(t),
\end{equation}
where $S_i$ is the starting time of each piece.

To provide the convergence results, we will first give some analysis for the
Fokker-Planck Equation \cref{eq:fpe}. The classic results yield the next
theorem. 
\begin{theorem}
	\label{thm:gibbs}
	Assume that $\sigma(t)=\sigma_0$ is a constant. The distribution $p(x,t)$ converges($\ell_1$) to the Gibbs distribution
	\begin{equation}
		\tilde{p}_{\sigma_0}(x)=\frac{1}{Z}e^{-2\Fun/\sigma^2_0},
		\label{eq:gibbs}
	\end{equation}
	where $Z$ is the normalization constant
	$\displaystyle Z = \int_\Mfd e^{-2\Fun/\sigma^2_0} $.
\end{theorem}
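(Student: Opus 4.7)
The plan is to invoke the classical Fokker--Planck ergodicity theory adapted to the compact Riemannian setting of $\Mfd_{n,p}$. The first step is to verify that $\tilde{p}_{\sigma_0}$ is a stationary solution of \cref{eq:fpe}. Using $\nabla_{\Mfd}\tilde{p}_{\sigma_0} = -\frac{2}{\sigma_0^2}\tilde{p}_{\sigma_0}\nabla_{\Mfd}\Fun$, a direct substitution shows that the drift and diffusion contributions cancel identically. Normalizability and smoothness of $\tilde{p}_{\sigma_0}$ follow from compactness of $\Mfd_{n,p}$ and smoothness of $\Fun$, which also guarantees that $\tilde{p}_{\sigma_0}$ is bounded above and below by strictly positive constants.

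Next, I would pass to the relative density $u(x,t) := p(x,t)/\tilde{p}_{\sigma_0}(x)$. Using the divergence identity $\nabla_{\Mfd}\cdot(\tilde{p}_{\sigma_0}\nabla_{\Mfd}u) = \tilde{p}_{\sigma_0}\Delta_{\Mfd}u - \frac{2}{\sigma_0^2}\tilde{p}_{\sigma_0}\langle \nabla_{\Mfd}\Fun,\nabla_{\Mfd}u\rangle$ together with stationarity of $\tilde{p}_{\sigma_0}$, equation \cref{eq:fpe} transforms into the weighted heat equation $\tilde{p}_{\sigma_0}\,\partial_t u = \frac{\sigma_0^2}{2}\nabla_{\Mfd}\cdot(\tilde{p}_{\sigma_0}\nabla_{\Mfd}u)$ on the boundaryless manifold $\Mfd_{n,p}$. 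Multiplying by $u-1$ and integrating by parts produces the dissipation identity
\[
\frac{d}{dt}\,\frac{1}{2}\int_{\Mfd}(u-1)^2\,\tilde{p}_{\sigma_0}\,dV_{\Mfd} = -\frac{\sigma_0^2}{2}\int_{\Mfd}|\nabla_{\Mfd}u|^2\,\tilde{p}_{\sigma_0}\,dV_{\Mfd},
\]
where mass conservation $\int u\,\tilde{p}_{\sigma_0}\,dV_{\Mfd}=\int p\,dV_{\Mfd}=1$ has been used.

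The final step is to apply the Poincar\'e inequality for the weighted measure $\tilde{p}_{\sigma_0}\,dV_{\Mfd}$: because $\Mfd_{n,p}$ is smooth and compact and $\tilde{p}_{\sigma_0}$ is smooth and strictly positive, a spectral gap $\lambda>0$ exists, either by the Bakry--\'Emery criterion or by the Holley--Stroock perturbation of the unweighted Poincar\'e inequality on a closed manifold. Gr\"onwall then yields exponential decay of $\|u-1\|_{L^2(\tilde{p}_{\sigma_0}\,dV_{\Mfd})}$, which by Cauchy--Schwarz against the bounded weight $\tilde{p}_{\sigma_0}$ implies $\|p(\cdot,t)-\tilde{p}_{\sigma_0}\|_{L^1(\Mfd)}\to 0$. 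The main obstacle is justifying the Poincar\'e inequality intrinsically on the canonical Stiefel manifold --- whose metric differs from the submanifold metric inherited from $\Real^{n\times p}$ --- but this is a standard consequence of compactness together with smoothness and strict positivity of the weight, which is what the statement alludes to by ``classic results.''
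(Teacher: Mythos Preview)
Your argument is correct, but it follows a different route from the paper's. The paper works with the \emph{relative entropy} $H(p\mid\tilde p_{\sigma_0})=\int_{\Mfd}p\log(p/\tilde p_{\sigma_0})$ rather than the weighted $L^2$ variance, and controls it via a \emph{logarithmic Sobolev inequality}: after rescaling to $\sigma_0=\sqrt 2$, it invokes the positive Ricci curvature of the canonical Stiefel manifold to obtain an LSI for the uniform measure, then a Holley--Stroock perturbation to pass to the Gibbs weight, giving $H(p(\cdot,t)\mid\tilde p_{\sigma_0})\le e^{-2\lambda t}H(p(\cdot,0)\mid\tilde p_{\sigma_0})$; the $\ell_1$ convergence then follows from the Csisz\'ar--Kullback inequality. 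Your approach replaces LSI by the weaker Poincar\'e inequality and entropy by $\chi^2$-distance, which is arguably more elementary and still yields exponential decay; the price is that it formally requires $p(\cdot,0)/\tilde p_{\sigma_0}\in L^2(\tilde p_{\sigma_0}\,dV_{\Mfd})$, a slightly stronger hypothesis on the initial datum than finite relative entropy (harmless here after one instant of parabolic smoothing on the compact manifold). Conversely, the paper's entropy/LSI route ties the convergence rate to explicit geometric data (the Ricci lower bound) rather than to an abstract spectral gap.
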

\begin{proof}
	To simplify the problem, we set $\sigma_0=\sqrt{2}$ or let $t'\leftarrow 2t/\sigma^2_0$ and $\Fun'\leftarrow 2\Fun/\sigma^2_0$ to transfer the Fokker-Planck equation to one with $\sigma_0=\sqrt{2}$.  Define the relative entropy
	\begin{equation}
		H(p\vert q)=\int_\Mfd p\log(\frac{p}{q})\Diff x,
		\label{eq:relent}
	\end{equation}
	for any two probability density function $p,q$ (on the manifold). The Csisz\'ar-Kullback inequality shows that
	\begin{equation}
		\|p-q\|^2_{\ell_1}\leq2H(p\vert q).
	\end{equation}
	The canonical Stiefel manifold is knows as Einstein manifold in the case of $n = p$ and thus the Ricci curvature is positive definite \cite{Oprea:2007vz}.

	For general canonical stiefel manifolds,  the same results are shown in \cite{Henkel:2005hk}.  It is given in	\cite{Markowich:2000up,Anonymous:EQrJJzRd} that $\Fun_0(X)=0$ satisfies a logarithmic Sobolev inequality with constant $\lambda_0$ which is the smallest eigenvalue of the Ricci curvature.
 It follows from \cite{Markowich:2000up,Anonymous:EQrJJzRd} that $\Fun(X)$ also satisfies a logarithmic Sobolev inequality with constant $\lambda=\lambda_0(\max_\Fun-\min_\Fun)$, which indicates that
	\begin{equation}
		H(p(:,t)\vert \tilde{p}_{\sigma_0})\leq e^{-2\lambda t}H(p(:,0)\vert \tilde{p}_{\sigma_0})).
	\end{equation}
	From the above analysis, we have
	\begin{equation}
		\|p(:,t)-\tilde{p}_{\sigma_0}\|^2_{\ell_1}\leq 2e^{-2\lambda t}H(p(:,0)\vert \tilde{p}_{\sigma_0}),
	\end{equation}
	which completes the proof.
\end{proof}
We next provide the convergence results of \cref{alg:iddm}, which is nearly the
same to the proof in \cite{CHOW:2013jm} in the Euclidean space.
\begin{theorem}[Convergence of \cref{alg:iddm}]
	\label{thm:id}
	Assume that the local algorithm satisfies $\Fun(X_k)\leq \Fun(X'_k)$.
	Let the set of global minimizers be $P$, the global minimum be $\Fun^*$, and
    $X_{opt}$ to be the optimal solution obtained by \cref{alg:iddm}. For any
    given $\epsilon>0$ and $\zeta>0$, let $U$ be the basin of global minima,
    i.e., $U = \{X \in \Mfd_{n,p}~|~ \Fun(X)<\Fun^*+\zeta\}$. Then the following two statements hold:
\begin{enumerate}
\item $\forall \eta\in(0,1)$, $\exists\sigma>0$ and $T>0$ (as a function of $\sigma$) such that if $\sigma_i\leq\sigma$ and $T_i>T(\sigma_i)$, then $\Prob(X'_i\in U) \geq \eta$.
\item The probability to reach $U$ after $N$ cycles is at least  $1 - (1 - \eta)^N$, namely, $\Prob(\exists i,\ X'_i\in U) > 1 - (1 - \eta)^N$. Thus, there exists $N_0>0$ such that if $\sigma_i\leq\sigma$, $T_i>T(\sigma_i)$ and $N>N_0$,
	\begin{equation}
		\Prob (\Fun(X_{opt})<\Fun^*+\zeta)\geq 1-\epsilon.
	\end{equation}
\end{enumerate}
\end{theorem}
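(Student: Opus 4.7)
The plan is to reduce the statement to the Gibbs-concentration behavior established in Theorem~\ref{thm:gibbs}, combined with a straightforward Bernoulli-type argument for the $N$ cycles. For the first claim, fix an arbitrary starting point $X_i$ of the $i$-th cycle, and let $p_i(x,t)$ denote the density (with respect to the canonical volume element on $\Mfd_{n,p}$) of the solution of the extrinsic SDE \eqref{eq:sde:ext:stra} with constant diffusion strength $\sigma_i$, initialized at $X_i$. By Theorem~\ref{thm:gibbs}, $p_i(\cdot,t)$ converges in $\ell^1$ to the Gibbs density $\tilde{p}_{\sigma_i}(x) = Z^{-1} e^{-2\Fun(x)/\sigma_i^2}$ at an exponential rate, and the rate constant is uniform in the starting point because $\Mfd_{n,p}$ is compact and only $H(p_i(\cdot,0)\,|\,\tilde{p}_{\sigma_i})$ enters through a uniformly bounded prefactor.

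The second ingredient is the classical Laplace-type concentration of the Gibbs measure as $\sigma_i \to 0$. Since $\Fun$ is smooth on the compact manifold $\Mfd_{n,p}$ and $U = \{\Fun < \Fun^* + \zeta\}$ is an open neighborhood of the set of global minimizers $P$, there exists $\delta > 0$ such that $\Fun(x) \geq \Fun^* + \delta$ for all $x \in \Mfd \setminus U$. A direct estimate then shows
\[
\int_{\Mfd \setminus U} \tilde{p}_{\sigma_i}(x)\,\mathrm{d}x \;\leq\; \frac{\mathrm{vol}(\Mfd)\,e^{-2(\Fun^*+\delta)/\sigma_i^2}}{\int_{P_\rho} e^{-2\Fun/\sigma_i^2}\,\mathrm{d}x} \;\xrightarrow[\sigma_i \to 0]{}\; 0,
\]
where $P_\rho$ is a small neighborhood of $P$ on which $\Fun \leq \Fun^* + \delta/2$. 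Given $\eta \in (0,1)$, first choose $\sigma > 0$ small enough that $\int_U \tilde{p}_{\sigma_i} > (1+\eta)/2$ whenever $\sigma_i \leq \sigma$; then choose $T(\sigma_i)$ large enough that the Csisz\'ar--Kullback bound from Theorem~\ref{thm:gibbs} gives $\|p_i(\cdot,T_i)-\tilde{p}_{\sigma_i}\|_{\ell^1} < (1-\eta)/2$ for every $T_i > T(\sigma_i)$. Combining these two bounds yields $\Prob(X'_i \in U \mid X_i) \geq \eta$ uniformly in the starting point $X_i$, which is the first statement.

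For the second claim, condition successively on $\mathcal{F}_i = \sigma(X_0,\dots,X_i)$: by the Markov property of the SDE together with the deterministic local solver, the bound $\Prob(X'_{i+1} \in U \mid \mathcal{F}_i) \geq \eta$ holds almost surely on the event $\{X'_j \notin U, j \leq i\}$. Iterating gives $\Prob(X'_j \notin U \text{ for all } j = 1,\dots,N) \leq (1-\eta)^N$, and the hypothesis $\Fun(X_k) \leq \Fun(X'_k)$ on the local solver guarantees that once some $X'_j$ enters $U$, the subsequent $X_j$ still satisfies $\Fun(X_j) < \Fun^* + \zeta$, so $X_{\mathrm{opt}}$ does as well. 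Choosing $N_0 = \lceil \log \epsilon / \log(1-\eta) \rceil$ then yields $\Prob(\Fun(X_{\mathrm{opt}}) < \Fun^* + \zeta) \geq 1 - (1-\eta)^N \geq 1-\epsilon$ for all $N > N_0$.

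The main obstacle is making sure the exponential-rate estimate in Theorem~\ref{thm:gibbs} is uniform in the initial condition $X_i$ (so that the Markov-property iteration goes through); this is handled by observing that the relative-entropy prefactor $H(p_i(\cdot,0)\,|\,\tilde{p}_{\sigma_i})$ is uniformly bounded on the compact manifold, and that the logarithmic Sobolev constant $\lambda$ depends only on $\sigma_i$ and $\Fun$, not on the starting point.
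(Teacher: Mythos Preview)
Your proposal is correct and follows essentially the same route as the paper: Gibbs-measure concentration on $U$ for small $\sigma$, $\ell^1$-convergence to the Gibbs measure via Theorem~\ref{thm:gibbs} for large $T$, and a product bound over the $N$ cycles to reach the $1-(1-\eta)^N$ estimate. Your conditioning argument via the Markov property is in fact slightly more careful than the paper's appeal to ``independent intervals,'' and you also make the Laplace-type concentration explicit where the paper simply asserts it.
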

\begin{proof}
	A small neighborhood $U$ can be given so that $\forall X\in U$, $\Fun(X)<\Fun^*+\zeta$. We only need to prove that $\Prob(\exists k,\ \mathrm{s.t.}\ X'_k\in U)\geq 1-\epsilon$.

	From \cref{eq:gibbs}, $\forall\eta\in (0,1)$, $\exists \sigma>0$ such that if $\sigma_i\leq\sigma$,
	\begin{equation}
		\int_U\tilde{p}_{\sigma_i}(x)\Diff x>\eta+(1-\eta)/2.
	\end{equation}
	Meanwhile, \cref{thm:gibbs} yields that $\exists T>0$ such that if $T_i>T$,
	\begin{equation}
		\|p(:,S_i+T_i)-\tilde{p}_{\sigma_i}\|_{\ell_1}<(1-\eta)/2.
	\end{equation}
	Hence, we have
	\begin{equation}
		\Prob(X'_i\in U)=\int_U \tilde{p}_{\sigma_i}dx-\int_U \tilde{p}_{\sigma_i}-p(x,S_i+T_i)dx\geq \eta .
	\end{equation}
	Independent intervals yields that
	\begin{equation}
		\Prob(\forall i,\ X'_i\in U^c)<(1-\eta)^N.
	\end{equation}
	Select a proper $N_0$ such that $(1-\eta)^{N_0}\leq\epsilon$ and we complete the proof.
\end{proof}

\begin{remark} {\rm The results of \cref{thm:id} can be improved if we impose
  some stronger conditions on the object function and the local algorithm. If 1)
  the local algorithm always achieve a nearest local minimizer and 2) there are
  finite local minimizers (which is acceptable for a compact set), then the
  results can be improved as $\Prob(\mathrm{dist}(X_{opt},P)<\zeta)\geq
  1-\epsilon$. The proof is the same as \cite{CHOW:2013jm}.} \end{remark}

\begin{remark} {\rm 
 We have provided some analysis for the piecewise constant $\sigma(t)$
proposed by \cite{CHOW:2013jm}. Notice that other $\sigma(t)$ may also give
global convergence. For example, one can apply the
$\sigma(t)=c/\sqrt{\log(t+2)}$ given by CDD and the proof of convergence is the
same. One can refer to \cite{Chiang:1987ip,Geman:1986js} for the proof.}
\end{remark}
\section{Numerical Experiments}
\label{sec:experiments}
\graphicspath{ {./figures/} }

In this section, we demonstrate the effectiveness of IDD methods on Stiefel
Manifold (IDDM) on a variety of test problems. The first two subsections are
devoted to the spherically constrained problems, while the last one focuses on
the orthogonality constrained problem. We should point out that we have also test
many problems, such as conformal mapping~\cite{gu2003global,lai2014folding}, p-Harmonic
flow~\cite{lin1989relaxation,tang2001color,vese2002numerical,goldfarb2009curvilinear},
 compressed
modes~\cite{ozolicnvs2013compressed} and nonlinear eigenvalue problem in
density functional theory \cite{scf-sinum,TRDFT}. They are not choosen in this
section because the local algorithm is often able to return a pretty solution
(or even ``global solution'') in a single run.

The performance of IDDM is mainly compared with the \emph{Random-Start} local method dubbed as RSlocal, which randomly selects an initial point and then performs the local algorithm. The local algorithm that we employ is the curvilinear search method with Barzilai-Borwein steps (Algorithm 2 in \cite{Wen:2012ga}). 
Each run of IDDM consists of ten cycles  while RSlocal is made up
of ten trials of the local algorithm  starting from randomly
generated points. The parameter $\sigma_i$ in \cref{eq:sigma-up} is set to
$\sigma_i = \alpha/(i d_t)^{1/2(n-1)}$, where $d_t$ is the step length, $n$
is the dimension of the variables and $\alpha$ is the initial diffusion strength.
All experiments were performed on a workstation with an Intel Xeon E5-2640 v3 \@ 2.60GHz processor with access to 64 GB of RAM.
\subsection{Homogeneous Polynomial Optimization} 
In this subsection, we evaluate the performance on homogeneous polynomial
problems. The test polynomial is selected from \cite{Wen:2012ga} which cannot be
globally minimized effectively by the local methods:
\begin{equation} \label{prob:hp1}
\min_{x\in \mathbb{R}^n}\quad \Fun(x) = \sum_{1 \leq i \leq n} x_i^6 + \sum_{1 \leq i \leq n-1}x_i^3x_{i+1}^3, \quad 
\mathrm{s.t.}\quad \|x\|_2 = 1.\end{equation}
For each of $n=10,20,\ldots,200$, we repeat $50$ independent runs of IDDM and
RSlocal. The initial diffusion strength $\alpha$ is
selected as $1/n$. The minimum, mean and maximum of the objective function
values, as well as the averaged cpu time in seconds are reported in
\cref{tab:eg1}.  The corresponding mean and min are further illustrated in  the left side of \cref{fig:reseg1}. 
Our numerical results indicate that IDDM are always much better than RSlocal in this problem.

We further numerically explore the dependency of the performance of IDDM  to the
diffusion strength $\alpha$.  For each $n$ ranging from $40$ to $200$, we repeat $50$ independent tests of RSlocal and denote the  averaged objective function values as $\Fun_{\mathrm{RSlocal}}$.  Similarly, we repeat $50$ runs of IDDM with different initial diffusion strengths $\sigma$ from $10^{-4}$ to $10^{0}$ and the averaged objective function values are denoted by $\Fun_{\mathrm{IDDM}}$.  Each pixel in the right side of \cref{fig:reseg1} is a value of $-\log_{10} (\Fun_{\mathrm{IDDM}} /\Fun_{\mathrm{RSlocal}})$.  A positive value indicates an improvement achieved by IDDM over RSlocal while a negative value means that IDDM is worse than RSlocal. 
This image clearly shows that our IDDM outperforms RSlocal with the right choice of the diffusion strength illustrated in the region with the red color. 

\begin{table}[hbp]
\centering
\footnotesize
\setlength{\tabcolsep}{5pt}
\label{tab:eg1}
\caption{Numerical results of polynomial optimization \cref{prob:hp1}}
\begin{tabular}{|c|ccc|c|ccc|c|}
\hline
\multirow{2}{*}{$n$} & \multicolumn{4}{c|}{RSlocal} & \multicolumn{4}{c|}{IDDM}  \\ \cline{2-9} 
    &   min     &    mean & 	max       &   cpu (s)     &    min      &  mean         &    max     &	cpu (s)           \\ \hline
10 & 7.2e-04 & 2.5e-03 & 6.3e-03 & 0.041 & 7.2e-04 & 2.5e-03 & 4.8e-03  & 0.044  \\ \hline 
20 & 1.1e-04 & 6.1e-04 & 1.3e-03 & 0.047 & 5.0e-05 & 3.3e-04 & 1.1e-03  & 0.087  \\ \hline 
30 & 7.6e-05 & 2.8e-04 & 4.8e-04 & 0.057 & 1.5e-05 & 1.1e-04 & 2.5e-04  & 0.116  \\ \hline 
40 & 7.5e-05 & 1.6e-04 & 2.5e-04 & 0.065 & 5.4e-06 & 4.5e-05 & 1.2e-04  & 0.129  \\ \hline 
50 & 5.4e-05 & 1.1e-04 & 1.7e-04 & 0.078 & 2.2e-06 & 2.1e-05 & 5.5e-05  & 0.166  \\ \hline 
60 & 2.7e-05 & 8.0e-05 & 1.4e-04 & 0.087 & 2.1e-06 & 1.3e-05 & 3.5e-05  & 0.154  \\ \hline 
70 & 2.9e-05 & 6.0e-05 & 9.0e-05 & 0.100 & 1.0e-06 & 9.9e-06 & 2.6e-05  & 0.173  \\ \hline 
80 & 2.0e-05 & 4.5e-05 & 6.3e-05 & 0.095 & 1.3e-06 & 8.1e-06 & 4.2e-05  & 0.153  \\ \hline 
90 & 2.3e-05 & 3.7e-05 & 5.3e-05 & 0.099 & 1.2e-06 & 5.4e-06 & 1.7e-05  & 0.169  \\ \hline 
100 & 1.7e-05 & 3.1e-05 & 4.5e-05 & 0.104 & 6.7e-07 & 4.3e-06 & 1.4e-05  & 0.175  \\ \hline 
110 & 1.6e-05 & 2.6e-05 & 3.6e-05 & 0.118 & 4.6e-07 & 3.0e-06 & 8.6e-06  & 0.166  \\ \hline 
120 & 1.5e-05 & 2.3e-05 & 3.1e-05 & 0.115 & 6.2e-07 & 2.6e-06 & 8.9e-06  & 0.174  \\ \hline 
130 & 8.3e-06 & 1.9e-05 & 2.5e-05 & 0.125 & 3.2e-07 & 2.3e-06 & 4.6e-06  & 0.191  \\ \hline 
140 & 6.8e-06 & 1.6e-05 & 2.1e-05 & 0.128 & 7.5e-07 & 2.4e-06 & 7.3e-06  & 0.172  \\ \hline 
150 & 9.9e-06 & 1.5e-05 & 2.1e-05 & 0.136 & 6.7e-07 & 2.0e-06 & 6.9e-06  & 0.180  \\ \hline 
160 & 8.3e-06 & 1.3e-05 & 1.8e-05 & 0.140 & 3.4e-07 & 1.8e-06 & 7.5e-06  & 0.184  \\ \hline 
170 & 7.9e-06 & 1.1e-05 & 1.4e-05 & 0.153 & 3.2e-07 & 1.9e-06 & 5.0e-06  & 0.183  \\ \hline 
180 & 7.9e-06 & 1.0e-05 & 1.3e-05 & 0.152 & 5.4e-07 & 1.8e-06 & 5.0e-06  & 0.186  \\ \hline 
190 & 7.3e-06 & 9.5e-06 & 1.2e-05 & 0.154 & 4.1e-07 & 1.7e-06 & 3.4e-06  & 0.190  \\ \hline 
200 & 5.2e-06 & 8.5e-06 & 1.1e-05 & 0.160 & 5.8e-07 & 2.0e-06 & 7.0e-06  & 0.189  \\ \hline 
\end{tabular}
\end{table}

\begin{figure}[!hbt]
\centering
\label{fig:reseg1}
\caption{(a) The objective function values of IDDM an RSlocal on
\cref{prob:hp1}. (b) $-\log_{10} (\Fun_{\mathrm{IDDM}} /\Fun_{\mathrm{RSlocal}})$, i.e.,  the performance of IDDM using various initial diffusion strength with respect to \emph{RSlocal}.}
    \subfigure[]{
\includegraphics[width = 0.47\textwidth]{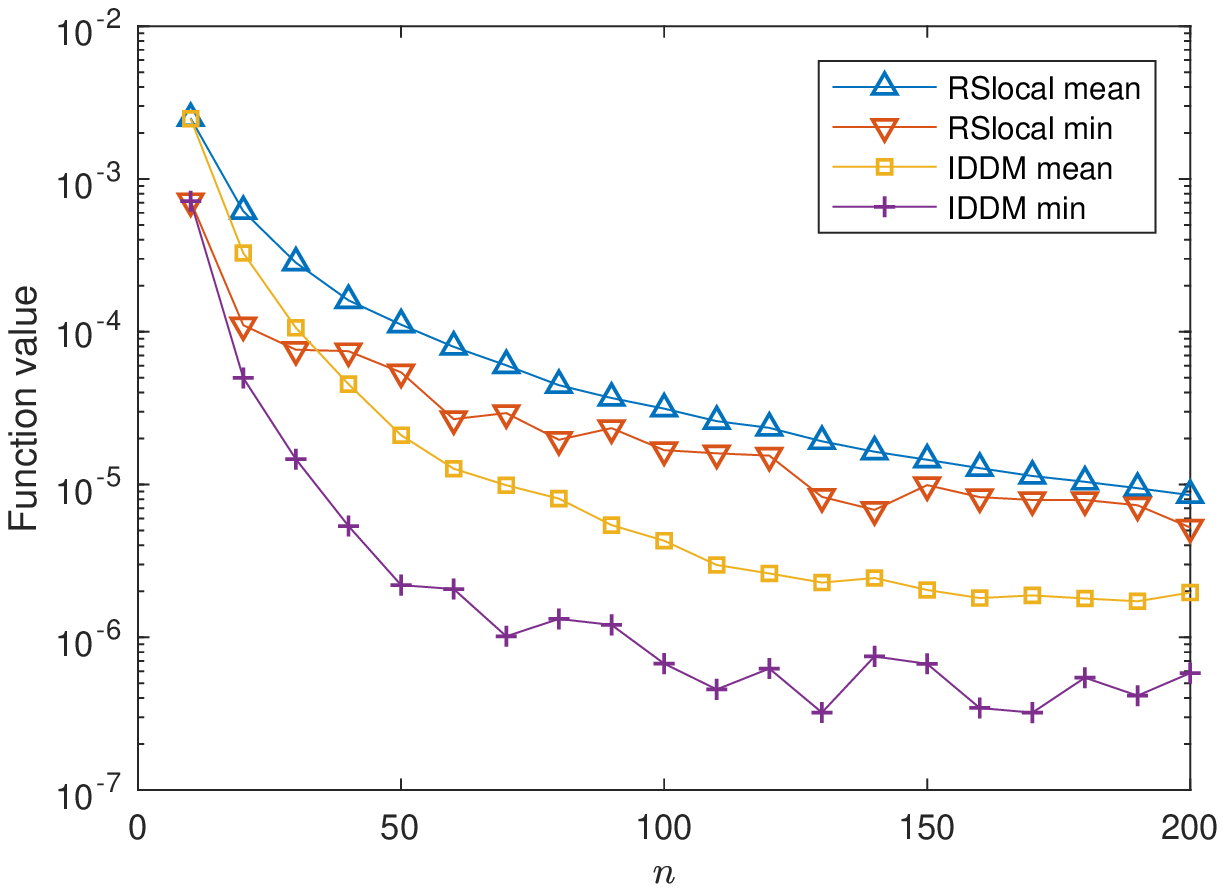}
  }
  \hfill
    \subfigure[]{
\includegraphics[width = 0.47\textwidth]{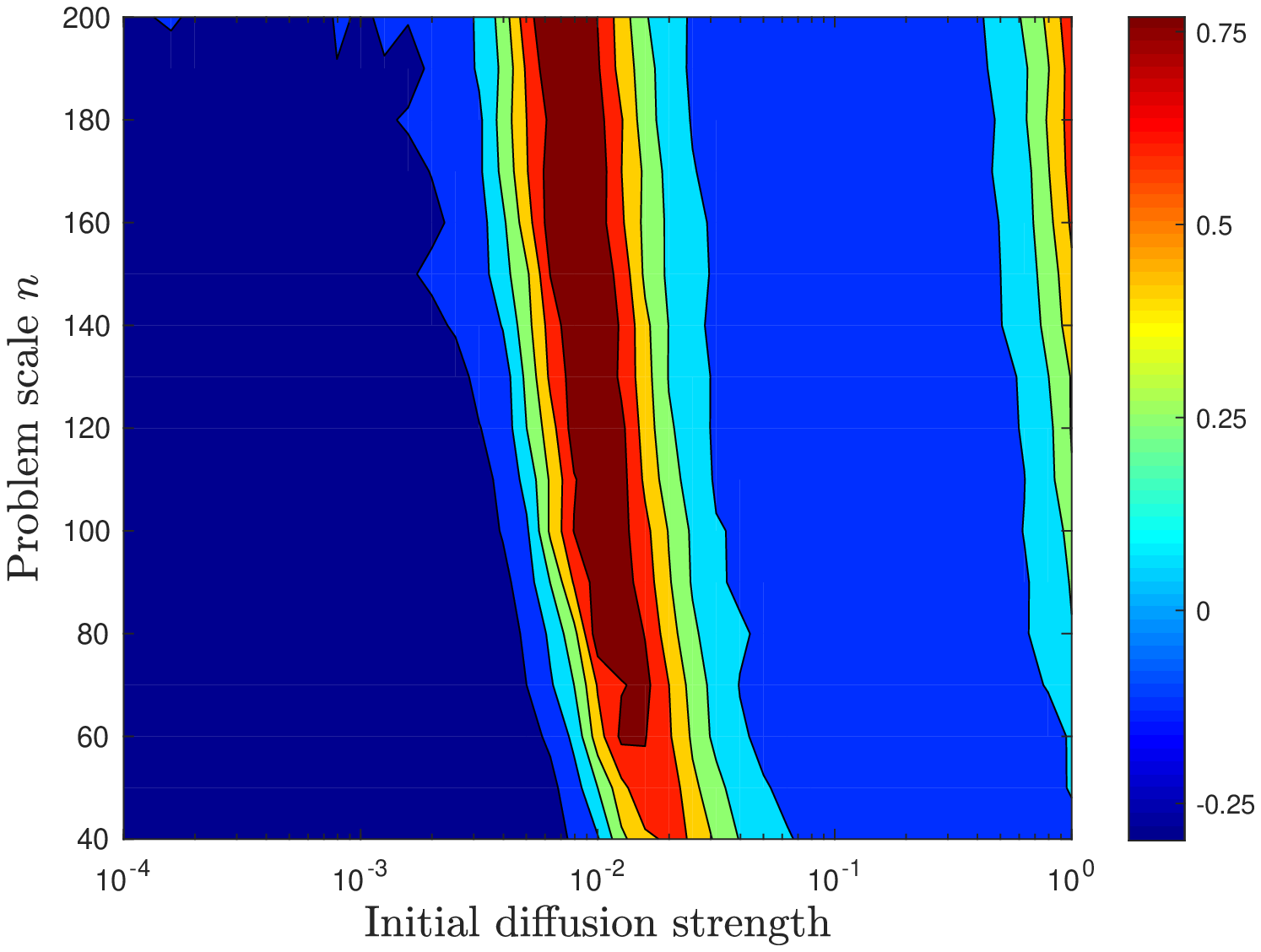}
  }

\end{figure}


\subsection{Biquadratic optimization}
We next consider the so-called biquadratic optimization over unit spheres
\cite{LinNieQi09}: 
\begin{equation}\label{prob:biquad}
\begin{aligned}
	\min_{x\in \mathbb{R}^n, y \in \mathbb{R}^n} \quad & b(x,y) = \sum_{1 \leq i, k \leq n, 1 \leq j,l \leq n} b_{ijkl} x_iy_jx_ky_l \\
	\text{s.t.} \quad & \|x\| = 1, \|y\| = 1.
\end{aligned}
\end{equation}
Without loss of generality, we impose the symmetric property $b_{ijkl} =
b_{kjil} = b_{ilkj}$ for $i, k, j,l = 1,\dots, n$.
A semidefinite programing relaxation approach is proposed in \cite{LinNieQi09}.
Since Examples 5.1 to 5.3 in this reference can be easily found by local solvers,
we generate the coefficients $b_{ijkl}$ as following:
\begin{enumerate}
  \item[case i)] $b_{ijkl} = (-1)^{i+j+k+l}|c|$, where $c$ is a Gaussian random
    variable.
  \item[case ii)] $b_{ijkl} = |c_1| 1_{c_2 > \eta}$, where $c_1$ is a Gaussian random
    variable, $c_2$ is uniformly distributed in $[0,1]$ and $\eta \in (0,1)$.
 \end{enumerate}
For each of $n=6,7,\ldots,25$, we repeat $50$ independent runs of IDDM and
RSlocal. For the parameter $\alpha$ of IDDM,  we select a few values in
$[10^{-4},10^2]$ for each $n$ and choose the one with the best performance.    The minimum, mean and maximum of the difference between the objective function
values and the smallest objective function value identified in the $50$ runs
are reported in Tables \ref{biq1} and \ref{biq2}. 
 From the tables, we can see that both IDDM and RSlocal can find the
 ``smallest'' function values. 
 IDDM
 usually performs better than RSlocal in most cases in terms of the mean value.

\begin{table}[hbp] \label{biq1}
	\centering
    \setlength{\tabcolsep}{5pt}
\footnotesize
	\caption {Numerical results of biquadratic optimization: case i}
		\begin{tabular}{|c|cccc|cccc|}
		\hline 
\multirow{2}{*}{$n$} & \multicolumn{4}{c|}{RSlocal} & \multicolumn{4}{c|}{IDDM}
\\ \cline{2-9} 
    &   min     &    mean & 	max      &   cpu (s)      &    min      &  mean         &    max      &	cpu  (s)        \\ \hline
              
6 & 5.2e-14 & 1.4e-02 & 2.5e-01   & 0.033 & 1.5e-14 & 8.3e-03 & 1.8e-02  & 0.034  \\ \hline 
7 & 3.6e-14 & 2.2e-02 & 2.8e-01   & 0.020 & 3.7e-14 & 1.4e-02 & 2.8e-01  & 0.030  \\ \hline 
8 & 4.8e-14 & 1.1e-01 & 1.6e+00   & 0.026 & 2.7e-15 & 2.5e-13 & 7.1e-13  & 0.030  \\ \hline 
9 & 2.5e-14 & 1.4e-01 & 3.9e-01   & 0.032 & 2.2e-14 & 4.3e-12 & 2.6e-11  & 0.037  \\ \hline 
10 & 3.3e-14 & 1.1e-01 & 1.2e+00  & 0.041 & 7.1e-15 & 3.4e-12 & 1.9e-11  & 0.046  \\ \hline 
11 & 2.7e-14 & 8.0e-02 & 6.4e-01  & 0.046 & 2.8e-14 & 1.6e-11 & 1.8e-10  & 0.054  \\ \hline 
12 & 9.9e-14 & 3.5e-02 & 2.1e-01  & 0.056 & 1.4e-13 & 1.9e-02 & 1.6e-01  & 0.075  \\ \hline 
13 & 3.7e-14 & 2.5e-01 & 7.7e-01  & 0.059 & 2.0e-14 & 2.0e-01 & 7.7e-01  & 0.076  \\ \hline 
14 & 1.1e-13 & 2.4e-01 & 1.1e+00  & 0.073 & 8.2e-14 & 1.4e-01 & 9.0e-01  & 0.102  \\ \hline 
15 & 1.2e-13 & 1.3e-01 & 5.2e-01  & 0.085 & 1.2e-13 & 4.8e-12 & 6.4e-11  & 0.088  \\ \hline 
16 & 5.0e-13 & 5.2e-02 & 4.6e-01  & 0.110 & 2.4e-12 & 2.7e-02 & 3.1e-01  & 0.163  \\ \hline 
17 & 1.4e-14 & 6.5e-01 & 1.5e+00  & 0.112 & 3.0e-13 & 3.9e-12 & 6.2e-11  & 0.111  \\ \hline 
18 & 2.0e-13 & 1.9e-01 & 9.5e-01  & 0.136 & 6.6e-14 & 6.1e-02 & 4.6e-01  & 0.187 \\ \hline
19 & 2.9e-13 & 3.6e-01 & 9.5e-01  & 0.186 & 1.4e-14 & 2.5e-12 & 1.9e-11  & 0.178  \\ \hline 
20 & 4.1e-14 & 4.0e-01 & 1.3e+00  & 0.225 & 2.8e-13 & 2.3e-01 & 9.8e-01  & 0.335  \\ \hline 
21 & 4.3e-14 & 4.9e-01 & 1.2e+00  & 0.267 & 4.4e-14 & 3.0e-01 & 8.7e-01  & 0.409  \\ \hline 
22 & 3.3e-13 & 4.2e-01 & 1.0e+00  & 0.324 & 4.4e-13 & 2.9e-11 & 1.4e-10  & 0.472  \\ \hline 
23 & 1.0e-13 & 6.9e-01 & 1.8e+00  & 0.410 & 2.7e-13 & 6.7e-12 & 2.5e-11  & 0.454  \\ \hline 
24 & 1.2e-13 & 4.7e-01 & 1.1e+00  & 0.484 & 1.2e-12 & 3.4e-01 & 9.9e-01  & 0.711  \\ \hline 
25 & 5.6e-13 & 3.4e-01 & 1.1e+00  & 0.556 & 5.1e-13 & 3.1e-01 & 1.2e+00  & 0.876  \\ \hline 
	\end{tabular}	
\end{table}
 
 \begin{table}[hbp] \label{biq2}
	\centering
    \setlength{\tabcolsep}{5pt}
    \footnotesize
	\caption {Numerical results of biquadratic optimization: case ii}
		\begin{tabular}{|c|cccc|cccc|}

		\hline 
\multirow{2}{*}{$n$} & \multicolumn{4}{c|}{RSlocal} & \multicolumn{4}{c|}{IDDM}
\\ \cline{2-9} 
    &   min     &    mean & 	max      &   cpu      &    min      &  mean         &    max     &	cpu           \\ \hline
    
 6 & 4.0e-14 & 3.6e-04 & 5.9e-03  & 0.014 & 4.7e-14 & 3.6e-04 & 5.9e-03  & 0.020  \\ \hline 
7 & 3.2e-14 & 7.1e-03 & 1.2e-01   & 0.016 & 4.2e-14 & 1.1e-13 & 2.8e-13  & 0.016  \\ \hline 
8 & 0.0e+00 & 8.3e-03 & 8.8e-02   & 0.032 & 4.9e-15 & 2.4e-03 & 4.0e-02  & 0.042  \\ \hline 
9 & 5.6e-14 & 7.0e-02 & 4.2e-01   & 0.020 & 2.3e-14 & 2.2e-02 & 4.2e-01  & 0.028  \\ \hline 
10 & 2.0e-14 & 6.7e-02 & 2.4e-01  & 0.029 & 1.4e-14 & 3.5e-02 & 1.8e-01  & 0.040  \\ \hline 
11 & 9.5e-14 & 7.4e-03 & 7.1e-02  & 0.031 & 6.8e-14 & 2.4e-03 & 6.3e-03  & 0.046  \\ \hline 
12 & 1.2e-13 & 4.6e-02 & 2.3e-01  & 0.036 & 4.4e-14 & 2.0e-12 & 2.0e-11  & 0.045  \\ \hline 
13 & 1.4e-13 & 5.2e-02 & 2.7e-01  & 0.039 & 4.3e-14 & 2.5e-02 & 2.9e-01  & 0.061  \\ \hline 
14 & 5.7e-14 & 2.9e-01 & 8.2e-01  & 0.056 & 3.5e-14 & 1.0e-01 & 7.1e-01  & 0.084  \\ \hline 
15 & 7.1e-15 & 6.0e-02 & 2.7e-01  & 0.064 & 3.8e-14 & 2.7e-02 & 3.0e-01  & 0.092  \\ \hline 
16 & 3.6e-14 & 1.3e-01 & 9.2e-01  & 0.091 & 5.0e-14 & 2.0e-12 & 3.1e-12  & 0.125  \\ \hline 
17 & 3.5e-14 & 5.6e-02 & 7.8e-01  & 0.091 & 1.3e-14 & 5.1e-13 & 7.6e-12  & 0.141  \\ \hline 
18 & 2.3e-13 & 2.8e-01 & 8.0e-01  & 0.118 & 5.4e-14 & 8.9e-14 & 1.5e-13  & 0.165  \\ \hline 
19 & 1.4e-13 & 1.1e-01 & 4.5e-01  & 0.146 & 1.2e-14 & 9.3e-02 & 2.7e-01  & 0.232  \\ \hline 
20 & 6.8e-14 & 1.8e-01 & 7.0e-01  & 0.202 & 2.4e-13 & 1.2e-01 & 7.1e-01  & 0.282  \\ \hline 
21 & 1.5e-13 & 1.3e-01 & 3.7e-01  & 0.235 & 3.6e-14 & 9.4e-02 & 2.3e-01  & 0.366  \\ \hline 
22 & 1.8e-13 & 2.3e-01 & 5.5e-01  & 0.291 & 7.0e-13 & 1.4e-01 & 5.1e-01  & 0.469  \\ \hline 
23 & 1.0e-12 & 1.3e-01 & 4.8e-01  & 0.360 & 5.6e-13 & 1.1e-01 & 5.1e-01  & 0.550  \\ \hline 
24 & 1.1e-12 & 2.1e-01 & 4.8e-01  & 0.389 & 9.9e-14 & 1.9e-02 & 4.2e-01  & 0.610  \\ \hline 
25 & 9.4e-13 & 9.6e-02 & 3.4e-01  & 0.507 & 0.0e+00 & 8.7e-02 & 3.3e-01  & 0.771  \\ \hline 
 	\end{tabular}	
\end{table}

 We next  demonstrate the performance of IDDM  with respect to the initial
 diffusion strength $\alpha$.  For each $n=\{18,20\}$, we repeat $50$ independent tests of RSlocal.  The  averaged difference to global objective function values is plotted as the red line in \cref{fig:biquad-diff}. Then we repeat $50$ runs of IDDM with different initial diffusion strengths $\sigma$ from $10^{-4}$ to $10^{2}$.  The averaged  difference to global objective function values are depicted as the blue curve in \cref{fig:biquad-diff}.  
 We can see that our IDDM outperforms RSlocal if the diffusion strength is chosen suitably. Similar behavior can be observed on other dimensions of $n$.
 \begin{figure}[hbp]
\centering
\label{fig:biquad-diff}
    \subfigure[$n=18$]{
\includegraphics[width = 0.47\textwidth]{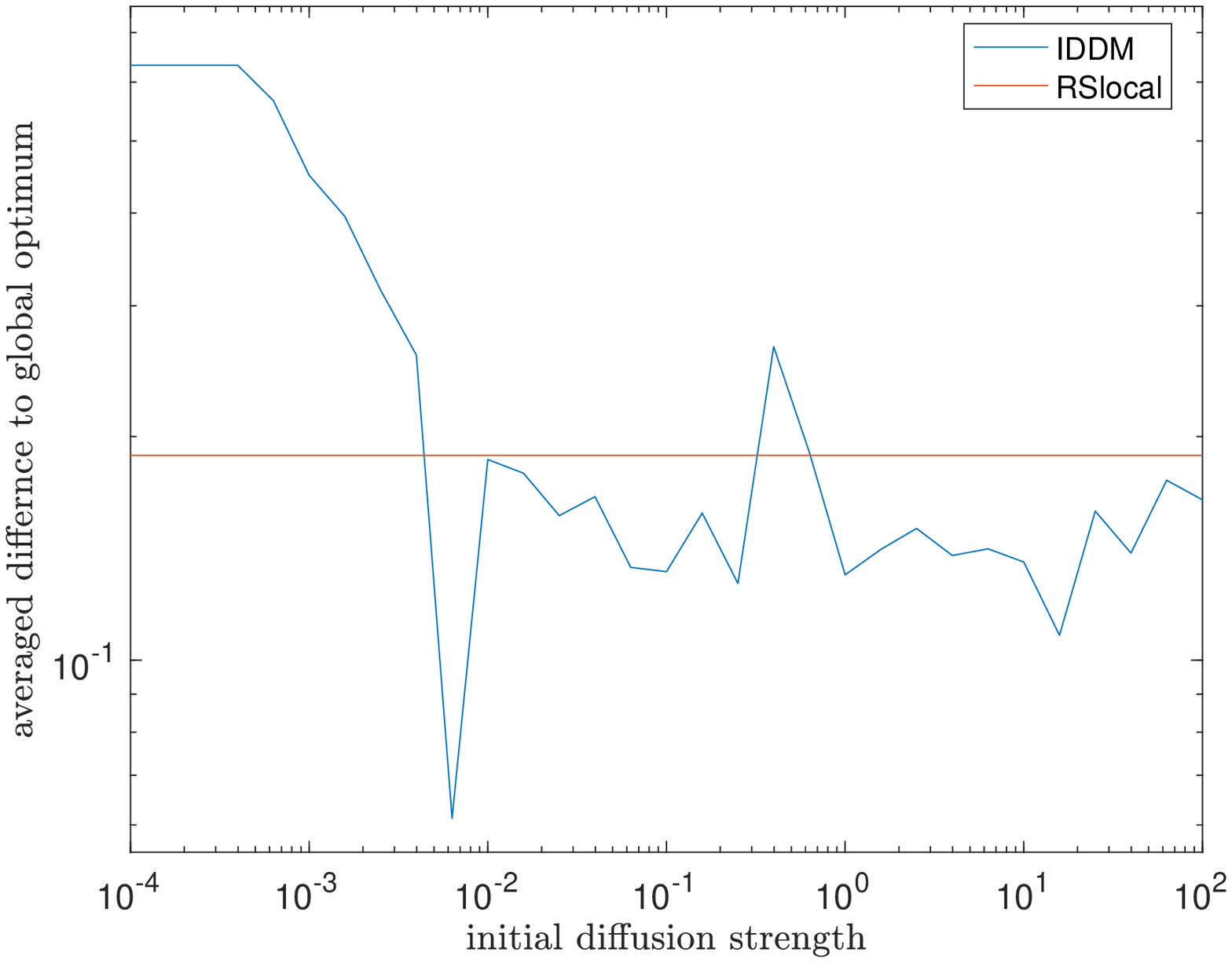}
  }
  \hfill
    \subfigure[$n=20$]{
\includegraphics[width = 0.47\textwidth]{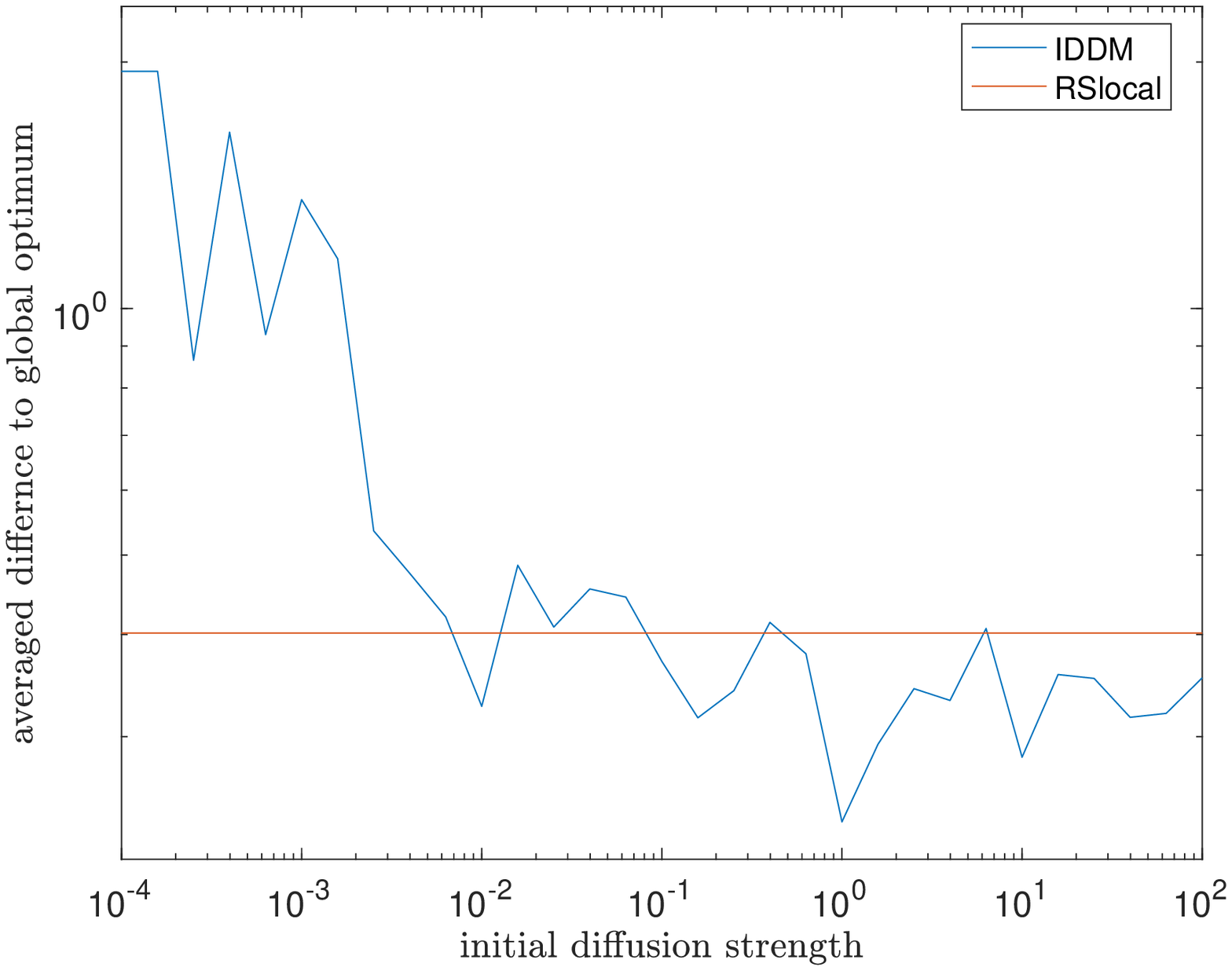}
  }
  \caption{The performance of IDDM with respect to the initial diffusion strength}
\end{figure}

\subsection{Computation of Stability Number}
Let $G = (V,E)$ be an undirected graph. A stable (independent) set in $G$ is a
set of vertices that are mutually nonadjacent. The \emph{stability number} $S(G)$ for a given graph $G$ is defined as the size of a maximum stable set in $G$. 
 It was shown by Motzkin and Straus \cite{Motzkin:1965fn} that 
\[S(G)^{-1} = \min_{\|x\|_2 = 1} \sum_{i=1}^n x_i^4 + 2\sum_{(i,j) \in E} x_i^2 x_j^2,\]
which is a single spherically constrainted problem. We select a few typical
graphs as in \cite{Wen:2012ga} and we repeat $50$ independent runs of IDDM and
RSlocal. The parameter $\alpha$ is set to $0.005$ in IDDM. The size $|V|$ of the graph, the  mean and maximum of $S(G)$ as well as the cpu time are presented in
\cref{tab:stab}.  Note that the larger the value $S(G)$ is obtained, the better the
stability number is estimated. We can see that IDDM almost always achieve a
better solution than RSlocal. 
\begin{table}[htbp]
  \setlength{\tabcolsep}{4pt}
  \footnotesize
\centering
\label{tab:stab}
\caption{Stability Number}
\begin{adjustbox}{max width=\textwidth}
\begin{tabular}{|cc|ccc|ccc|ccc|}
\hline
\multicolumn{2}{|c|}{graph} & \multicolumn{3}{c|}{RSlocal}                      & \multicolumn{3}{c|}{IDDM}                         \\ \hline
name              & $|V|$       & mean  & max  & cpu (s) & mean & max  & cpu (s)\\ \hline
        theta10 &    500 &47.0 & 50 &    0.686 & 47.0 & 51 &    0.620 \\ \hline 
        theta12 &    600 &49 & 50 &    0.949 & 49 & 54 &    0.872 \\ \hline 
        theta42 &    200 &15 & 17 &    0.254 & 15.5 & 18 &    0.265 \\ \hline 
      
            G43 &   1000 &180.5 & 188 &    0.671 & 189.0 & 195 &    0.497 \\ \hline 
            G44 &   1000 &182.0 & 190 &    0.654 & 191.0 & 199 &    0.503 \\ \hline 
            G45 &   1000 &179.0 & 188 &    0.667 & 187.5 & 197 &    0.495 \\ \hline 
            G46 &   1000 &180.0 & 186 &    0.651 & 189.0 & 196 &    0.505 \\ \hline 
            G47 &   1000 &184.0 & 190 &    0.689 & 191.5 & 200 &    0.487 \\ \hline 
            G51 &   1000 &332.0 & 336 &    0.813 & 343.0 & 346 &    0.603 \\ \hline 
            G52 &   1000 &330.0 & 335 &    0.837 & 341.0 & 344 &    0.616 \\ \hline 
            G53 &   1000 &330.0 & 334 &    0.783 & 340.0 & 343 &    0.557 \\ \hline 
            G54 &   1000 &323.0 & 330 &    0.725 & 334.0 & 339 &    0.532 \\ \hline 
    sanr200-0.7 &    200 &16.0 & 17 &    0.262 & 15.0 & 18 &    0.275 \\ \hline 
     brock200-4 &    200 &14.0 & 15 &    0.273 & 14.0 & 17 &    0.275 \\ \hline 
     hamming-6-4 &     64 &4.0 & 4 &    0.033 & 4.0 & 4 &    0.032 \\ \hline 
    hamming-9-8 &    512 &168.0 & 179 &    0.264 & 173.0 & 186 &    0.089 \\ \hline 
   hamming-10-2 &   1024 &65.0 & 67 &    0.911 & 66.0 & 70 &    0.844 \\ \hline 
   hamming-11-2 &   2048 &113.0 & 116 &    2.267 & 118.0 & 122 &    1.889 \\ \hline 
        keller4 &    171 &9.0 & 11 &    0.247 & 11.0 & 11 &    0.172 \\ \hline 
          fap25 &   2118 &78.0 & 80 &   29.471 & 79.0 & 82 &   25.063 \\ \hline 
       1dc.1024 &   1024 &69.0 & 71 &    1.092 & 70.0 & 73 &    1.006 \\ \hline 
       1dc.2048 &   2048 &119.0 & 123 &    2.679 & 125.0 & 129 &    2.290 \\ \hline 
        1et.512 &    512 &91.0 & 96 &    0.202 & 92.0 & 96 &    0.175 \\ \hline 
       1et.1024 &   1024 &154.0 & 158 &    0.426 & 159.0 & 162 &    0.363 \\ \hline 
       1et.2048 &   2048 &270.0 & 275 &    0.971 & 289.0 & 296 &    0.894 \\ \hline 
        1tc.512 &    512 &101.0 & 104 &    0.165 & 103.0 & 106 &    0.156 \\ \hline 
       1tc.1024 &   1024 &174.0 & 180 &    0.357 & 183.0 & 187 &    0.302 \\ \hline 
       1tc.2048 &   2048 &305.0 & 312 &    0.804 & 323.5 & 329 &    0.740 \\ \hline 
       1zc.512 &    512 &51.0 & 54 &    0.299 & 51.5 & 55 &    0.286 \\ \hline 
       1zc.1024 &   1024 &91.0 & 95 &    0.728 & 93.0 & 99 &    0.648 \\ \hline 
       1zc.2048 &   2048 &160.0 & 164 &    1.740 & 169.0 & 175 &    1.388 \\ \hline 
       1zc.4096 &   4096 &286.0 & 292 &    4.502 & 289.0 & 296 &    4.673 \\ \hline 
        2dc.512 &    512 &10.0 & 10 &    2.303 & 10.0 & 11 &    2.001 \\ \hline 
        
\end{tabular}
\end{adjustbox}
\end{table}

\subsection{Structure Determination in Cryo-EM}

We now consider an example with multiple orthogonality constraints that arises
from Cryo-EM \cite{Singer:2011ba}. In this test
problem, we try to recover $N$ orientations $\{\tilde{R}_i\}$ from two
dimensional (2D) projection images $\{P_i\}$ of a three dimensional (3D) object.
Each $\tilde{R}_i\in\mathbb{R}^{3\times 3}$ describes a 3D orthogonal matrix or
rotation, i.e.,  $\tilde{R}^\top_i\tilde{R}_i = I_3$ and $\det(\tilde{R}_i)=1$. 
Let $\tilde{c}_{ij} = (x_{ij},y_{ij},0)$ be the common line of the Fourier
transforms of $P_i$ and $P_j$
(viewed in $P_i$).
When the data are exact, it follows from the Fourier
projection-slice theorem \cite{Singer:2011ba} that  the common lines must coincide, i.e.,
\[\tilde{R}_i\tilde{c}_{ij}=\tilde{R}_j\tilde{c}_{ji}.\]
Since the third column  $\tilde{R}^3_i$ can be recovered from the first two
columns $\tilde{R}^1_i$ and $\tilde{R}^2_i$ as
$\tilde{R}^3_i=\pm\tilde{R}^1_i\times\tilde{R}^2_i$, the rotations $\{\tilde{R}_i\}$ can be compressed to $3\times 2$ matrix.  
Therefore, the corresponding optimization problem can be formulated as
\begin{equation}
	\begin{aligned}
		\min_{R_i}  \quad \sum^N_{i=1}\rho(R_i c_{ij},R_j c_{ji}),\quad                     
		\mathrm{s.t.}  \quad R^\top_iR_i=I_2,R_i\in\Real^{3\times 2}
	\end{aligned}
    \label{eq:cryoem}
\end{equation}
where $ \rho$ is the function representing the distance between the two vectors,
$R_i$ is made up of the first two columns of $\tilde{R}_i$ and $c_{ij}$ consists
of the first two elements of $\tilde{c}_{ij}$.  The distance function $\rho(u,v)=\|u-v\|_2$ is chosen in
\cite{Singer:2011ba} and it leads to an eigenvector relaxation and semidfinite
programming relxation. In our experiments, we select
$\rho(u,v)=\|u-v\|_q$ with $q=0.5$ since it often leads to better mean square error defined as follows. 
 Note that it holds $O\tilde{R}_i\tilde{c}_{ij}=O\tilde{R}_j\tilde{c}_{ji}$ for
 any 
fixed orthogonal matrix $ O \in \mathbb{R}^{3\times 3}$.
 Hence, we measure the error between the recovered rotations ${\hat{R}}_i$ and real rotations
$\tilde{R}_i$ by the mean square error (MSE) defined as
\[\mathrm{MSE}=\min_{O^\top O = I_3}\sum_{i=1}^N\|{\hat{R}}_i-O\tilde{R}_i\|^2_F.\]

We compare IDDM with RSlocal and the
eigenvector relaxation method  developed in
\cite{Singer:2011ba} (dubbed as ``eigs''). The semidefinite programming
relaxation approach in \cite{Singer:2011ba} is not compared because our
experiments show that our local algorithm often can be better than it in terms
of both accuracy and computational time.  Each run of IDDM consists of ten cycles starting from the
point generated from eigs  while RSlocal is made up
of ten trials of the local algorithm  starting either from eigs or nine randomly
generated points. The parameter $\alpha$ in IDDM is set to $0.1$. In the subsequent experiments, 
``cpu'' is the average cpu time of
one cycle in seconds and  ``obj'' stands for the final objective value.

Our first experiment is based on randomly generated data sets. We first create
$N$ rotations $\tilde{R}_i$ by using the MATLAB command ``orth(rand(3,3))''. The
common line vectors are computed next as  $\tilde{c}_{ij} =
\tilde{R}_i^{-1}\cdot (\tilde{R}^3_i\times
\tilde{R}^3_j)/\|\tilde{R}^3_i\times\tilde{R}^3_j\|$ and $\tilde{c}_{ji} =
\tilde{R}_j^{-1}\cdot (\tilde{R}^3_j\times
\tilde{R}^3_i)/\|\tilde{R}^3_j\times\tilde{R}^3_j\|$ from each pair
$\tilde{R}_i$ and $\tilde{R}_j$. After converting $\tilde{c}_{ij}$ and
$\tilde{c}_{ji}$   into $c_{ij}$ and $c_{ji}$, we replace $c_{ij}$ and $c_{ji}$
by two random vectors that are sampled from the uniform distribution over the
unit circle with probability $p$. That is, the common line vectors stay the same
with probability $(1-p)$.  We test the cases of $N=100, 500, 1000$. The computed objective function values are presented
in the left column of \cref{fig:cryoemfig}. The lines ``eigs'', ``IDDM mean'' and
``IDDM min'' are the objective function value computed by eigs, the averaged and minimum objective function value computed by
IDDM, respectively. The lines ``RSlocal mean'' and ``RSlocal min'' are the 
corresponding values of RSlocal. We can see that both RSlocal and IDDM can find
 better objective function values than eigs.
 We should point that both RSlocal and IDDM can find the same minumum when they start from
 the initial point generated by eigs. However, IDDM
 performs better than RSlocal on average.  
 
 A detailed summary of the computational results are reported in 
 \cref{tab:q05mserand}.  We further denote ``mse1'' as the smallest MSE
 generated in the ten cycles and ``obj1'' is the corresponding objective
 function value. Similarly,  ``obj2'' stands the smallest objective value in ten
 cycles and the corresponding MSE is denoted as ``mse2''. We can see that the
 pairs ``(mse1, obj1)'' and ``(mse2, obj2)'' are almost the same except the last
 row of each of $N=100,500,1000$. The reason is that the initial point produced by
 eigs lies in a small neighbourhood of the global solution and our local
 algorithm starting from eigs usually can find this global solution
 successfully. Although other cycles can also identify a local solution, the
 corresponding objective funtion values are larger. For the cases that mse1 is
 different from mse2, it means that a smaller objective function value does not
 necessary have a smaller MSE in the noisy cases. The reason is that the model
 \eqref{eq:cryoem} does not characterize the original Cryo-EM problem well.

Our second experiment is based on the dataset from \cite{Singer:2011ba}. The
noise-to-signal ratio (NSR) is defined as
$\textrm{NSR}=\textrm{Var}(Noise)/\textrm{Var}(Signal)$, where \textit{Signal}
is the clean projection image and \textit{Noise} is the noise realization. The
set up of the experiments is the same as the random data sets. The objective
function values are plotted in the right column of \cref{fig:cryoemfig}. They
show that eigs itself can provide a good solution when NSR is small. The
averaged objective function values obtained from IDDM are the best when NSR is
larger. IDDM also can find a smaller objective function value in a few cases.
 The detailed summary of computational results are presented in 
 \cref{tab:q05msesinger}. The
 pairs ``(mse1, obj1)'' and ``(mse2, obj2)'' are almost the same when NSR is
 small. However, for a large NSR, IDDM often is able to identify a smaller objective function
 value whose corresponding MSE is not the best. This observation again is not a
 contradiction but due to that the model
 \eqref{eq:cryoem} is not suitable in these cases.
 Nevertheless, these experiments are still perfect to show that IDDM is often better than the
 local algorithm itself and the local algorithm  starting from multiple
 randomly generated initial points when the global solution is difficult to
 be captured.

\begin{figure}[h]
\centering
\begin{minipage}{0.50\linewidth}
\includegraphics[width=1\linewidth]{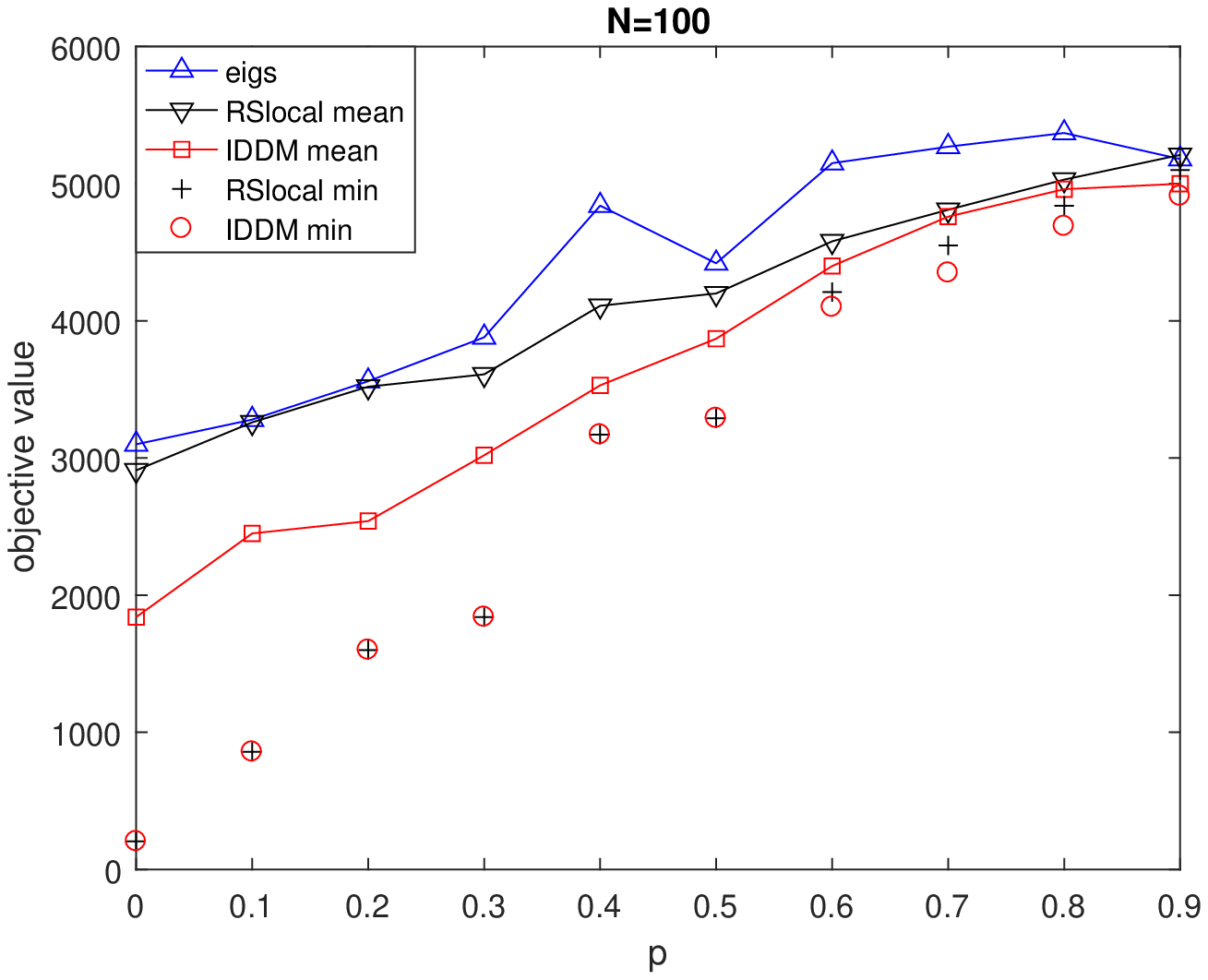}
\end{minipage}\hfill
\begin{minipage}{0.50\linewidth}
\includegraphics[width=1\linewidth]{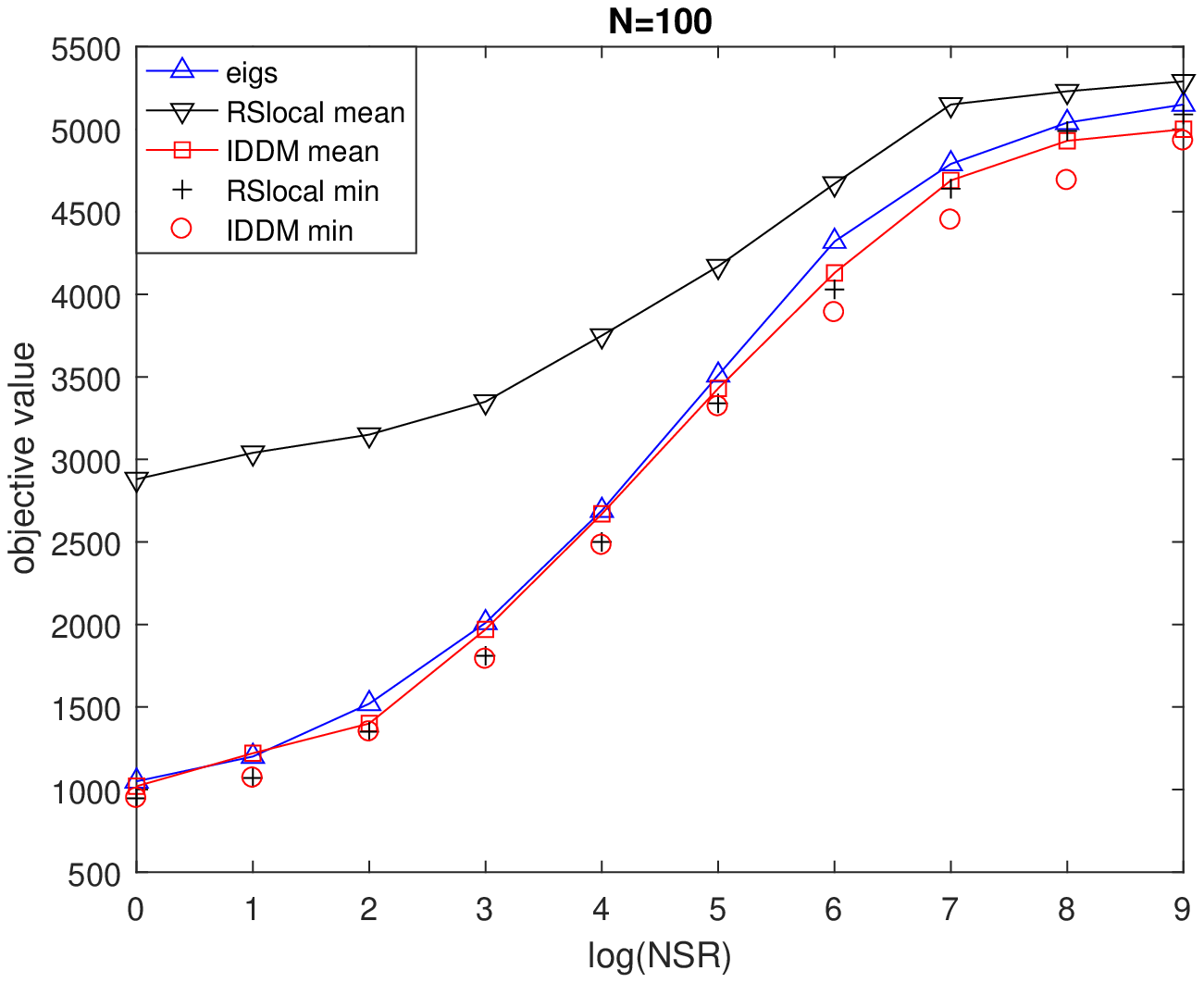}
\end{minipage}\hfill\\
\begin{minipage}{0.50\linewidth}
\includegraphics[width=1\linewidth]{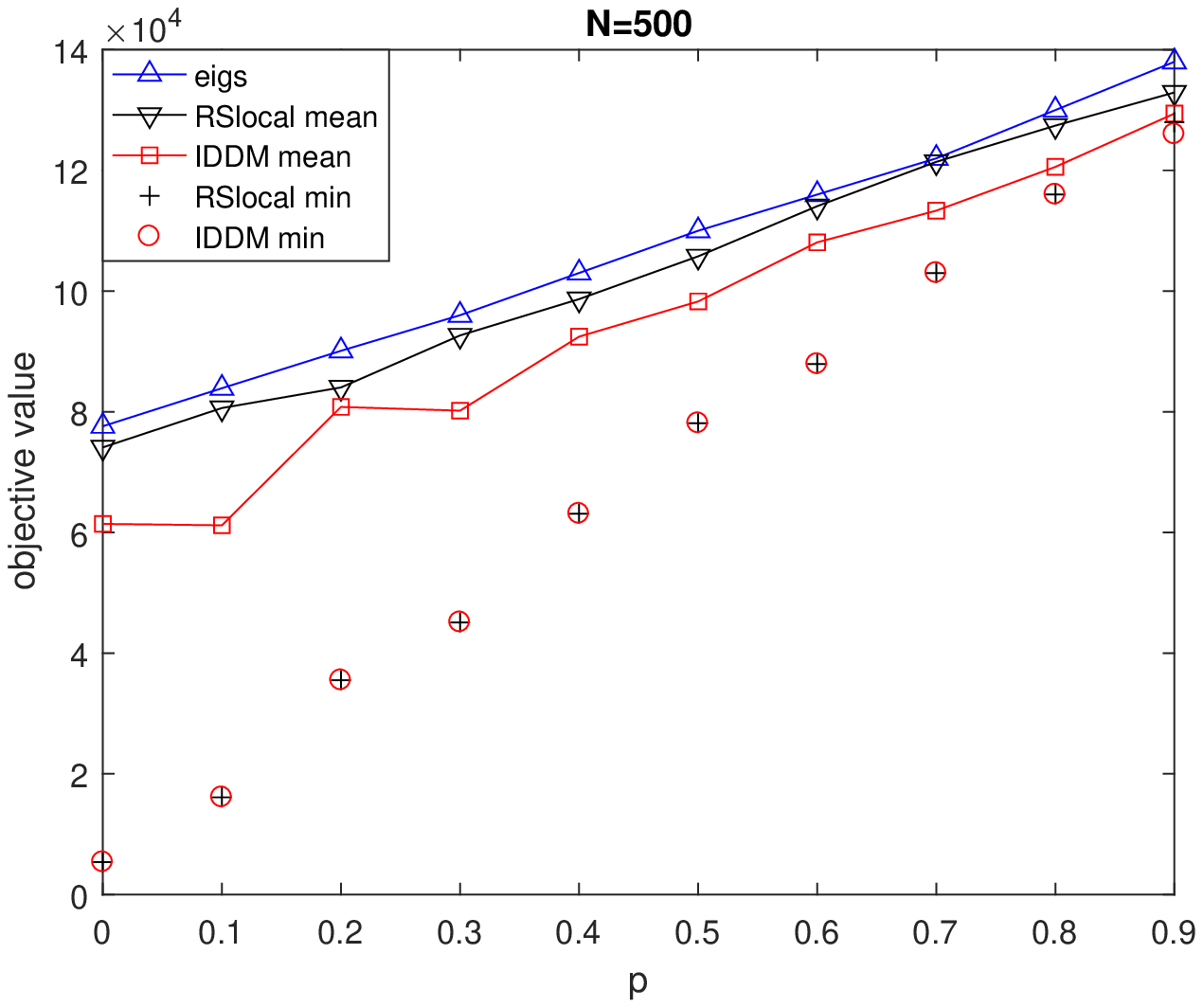}
\end{minipage}\hfill
\begin{minipage}{0.50\linewidth}
\includegraphics[width=1\linewidth]{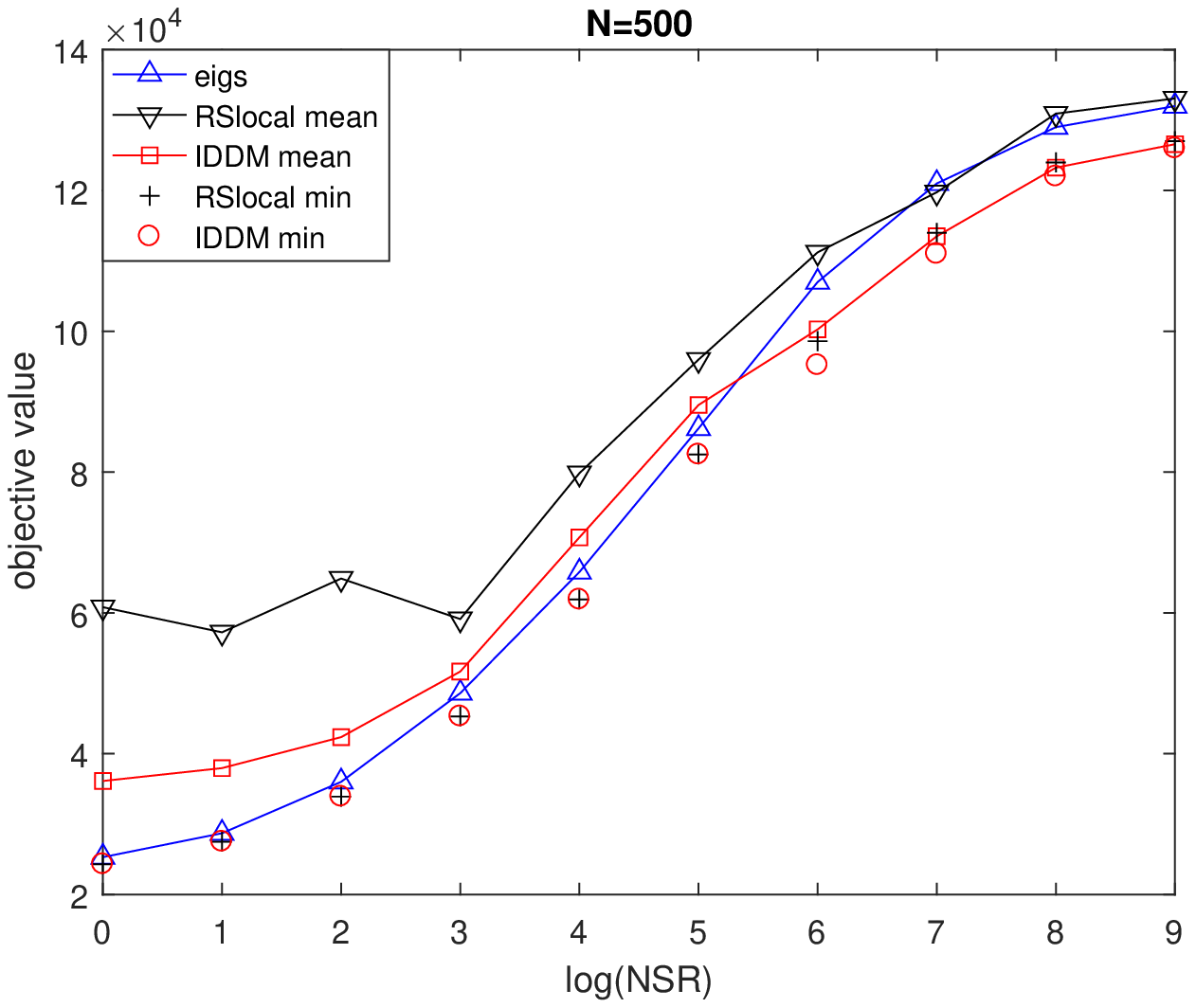}
\end{minipage}\hfill\\
\begin{minipage}{0.50\linewidth}
\includegraphics[width=1\linewidth]{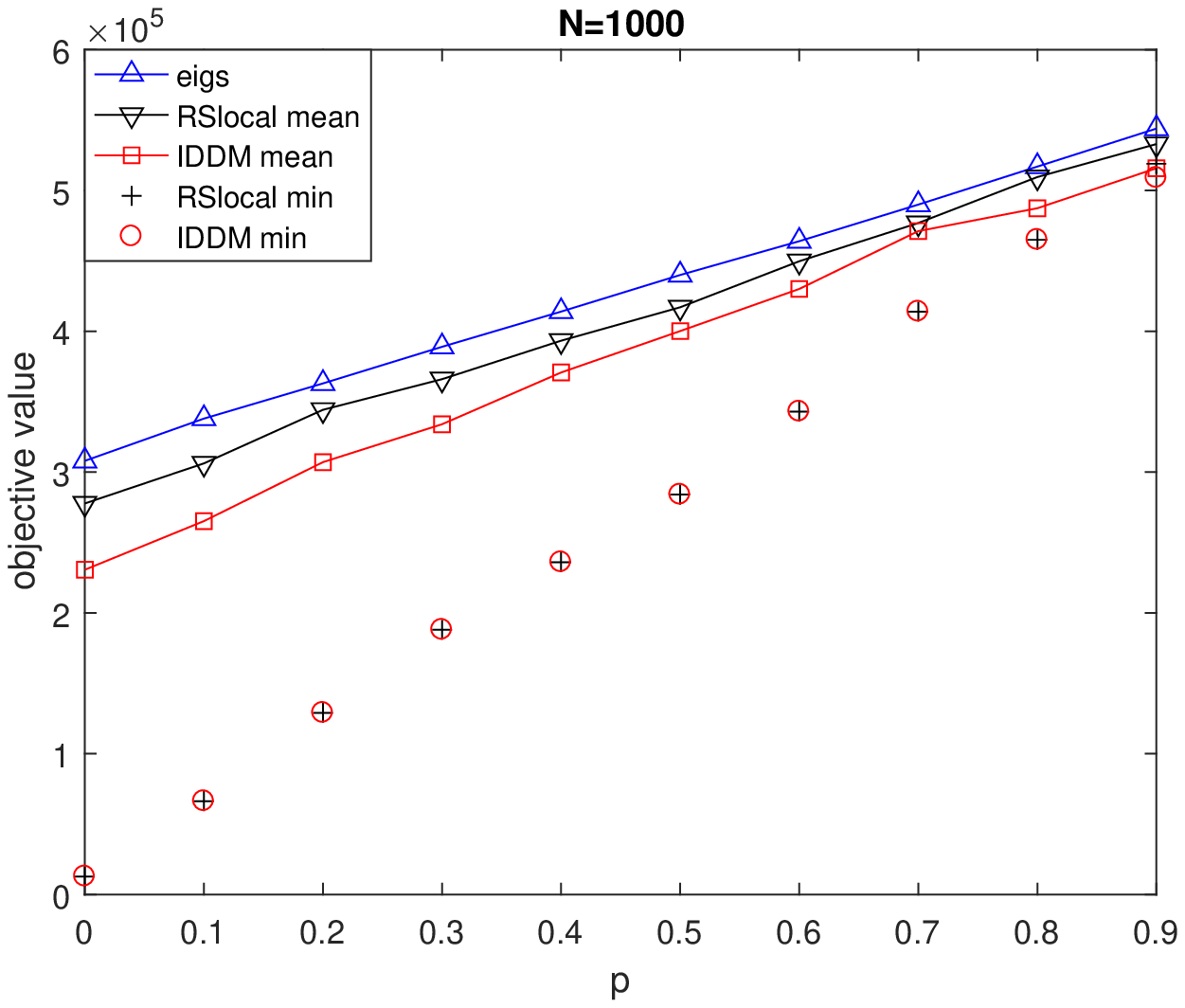}
\end{minipage}\hfill
\begin{minipage}{0.50\linewidth}
\includegraphics[width=1\linewidth]{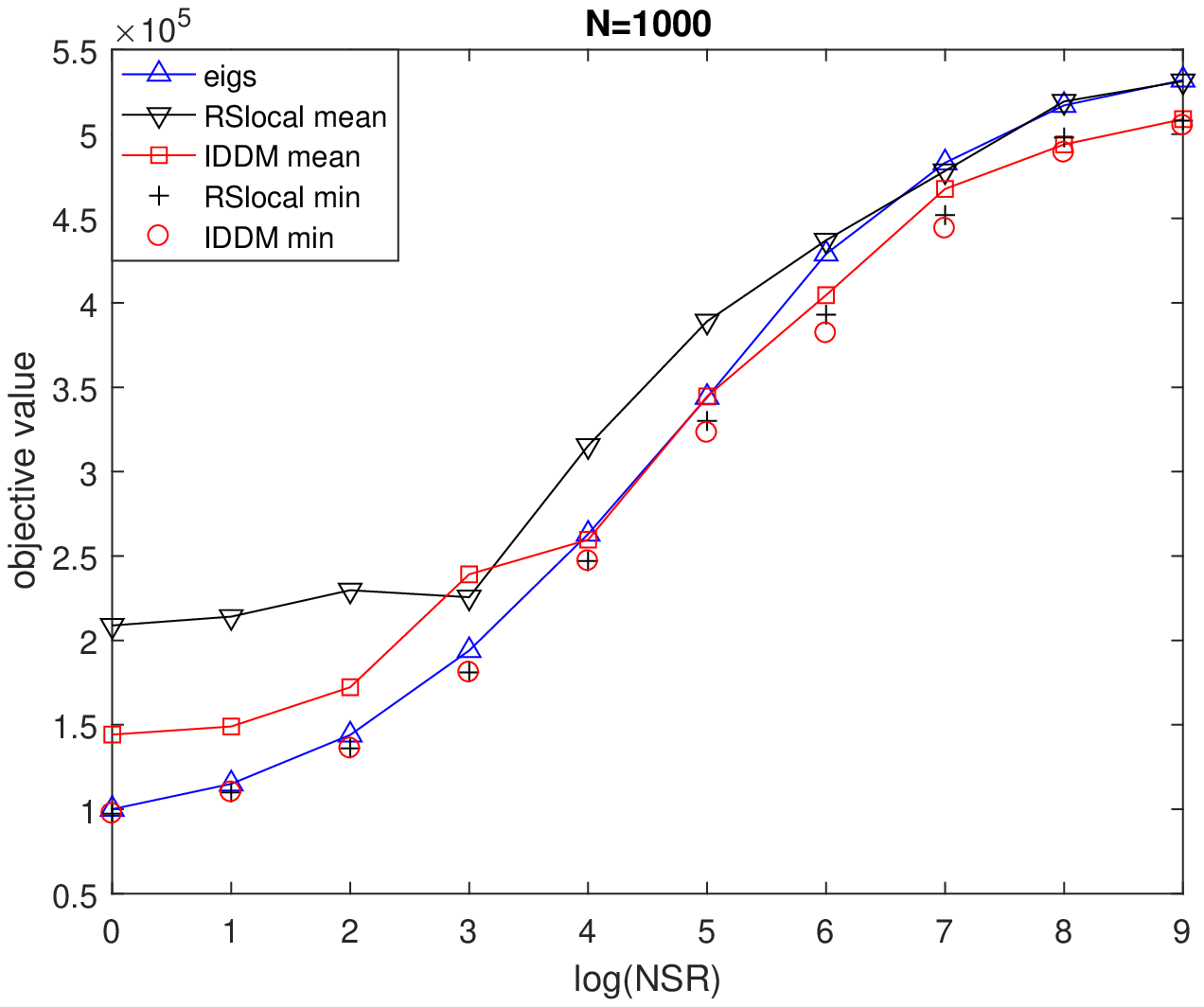}
\end{minipage}\hfill
\caption{The objective values for the random datasets (left column) and the
dataset from \cite{Singer:2011ba} (right column)}
\label{fig:cryoemfig}
\end{figure}

\begin{table}[htbp]
  \centering
  \footnotesize
\setlength{\tabcolsep}{3pt}
  \caption{The MSE of the eigenvector, RSlocal and IDDM for random dataset}
      \begin{tabular}{|c|c|c|cc|cc|c|cc|cc|c|}\hline
    \multirow{2}{*}{P}    & \multicolumn{2}{c|}{eigs}     & \multicolumn{5}{c|}{local} & \multicolumn{5}{c|}{IDDM} \\\cline{2-13}
    & mse & obj & \multicolumn{2}{c|}{mse1, obj1} & \multicolumn{2}{c|}{mse2, obj2} & cpu& \multicolumn{2}{c|}{mse1, obj1} & \multicolumn{2}{c|}{mse2, obj2} & cpu\\\hline

\multicolumn{13}{|c|}{N=100}\\ \hline
1.0 &  3.45e-1 & 3.10e3 & 1.19e-4 & 2.04e2 & 1.19e-4 & 2.04e2 &    0.7 & 1.19e-4 & 2.04e2 & 1.19e-4 & 2.04e2 &    0.8 \\ \hline 
0.9 &  3.31e-1 & 3.28e3 & 9.98e-4 & 8.57e2 & 9.98e-4 & 8.57e2 &    0.7 & 9.98e-4 & 8.57e2 & 9.98e-4 & 8.57e2 &    0.8 \\ \hline 
0.8 &  3.77e-1 & 3.56e3 & 7.01e-3 & 1.60e3 & 7.01e-3 & 1.60e3 &    0.7 & 7.01e-3 & 1.60e3 & 7.01e-3 & 1.60e3 &    0.7 \\ \hline 
0.7 &  3.97e-1 & 3.88e3 & 3.86e-3 & 1.84e3 & 3.86e-3 & 1.84e3 &    0.7 & 3.86e-3 & 1.84e3 & 3.86e-3 & 1.84e3 &    0.7 \\ \hline 
0.6 &  2.18 & 4.84e3 & 1.25 & 3.17e3 & 1.25 & 3.17e3 &    0.5 & 9.78e-1 & 3.26e3 & 1.25 & 3.17e3 &    0.6 \\ \hline 
0.5 &  7.00e-1 & 4.42e3 & 1.91e-1 & 3.29e3 & 1.91e-1 & 3.29e3 &    0.4 & 1.91e-1 & 3.29e3 & 1.91e-1 & 3.29e3 &    0.5 \\ \hline 
0.4 &  2.89 & 5.15e3 & 1.98 & 4.21e3 & 1.98 & 4.21e3 &    0.5 & 1.67 & 4.10e3 & 1.67 & 4.10e3 &    0.6 \\ \hline 
0.3 &  3.45 & 5.27e3 & 2.23 & 4.75e3 & 2.70 & 4.55e3 &    0.5 & 2.12 & 4.35e3 & 2.12 & 4.35e3 &    0.5 \\ \hline 
0.2 &  3.56 & 5.37e3 & 2.53 & 4.84e3 & 2.53 & 4.84e3 &    0.5 & 2.28 & 4.69e3 & 2.28 & 4.69e3 &    0.5 \\ \hline 
0.1 &  4.27 & 5.18e3 & 2.67 & 5.22e3 & 4.25 & 5.10e3 &    0.5 & 3.60 & 4.91e3 & 3.60 & 4.91e3 &    0.5 \\ \hline

\multicolumn{13}{|c|}{N=500}\\ \hline
1.0 &  3.42e-1 & 7.76e4 & 1.33e-4 & 5.35e3 & 1.33e-4 & 5.35e3 &    9.8 & 1.33e-4 & 5.35e3 & 1.33e-4 & 5.35e3 &    9.9 \\ \hline 
0.9 &  3.42e-1 & 8.39e4 & 4.98e-6 & 1.61e4 & 4.98e-6 & 1.61e4 &    9.9 & 4.98e-6 & 1.61e4 & 4.98e-6 & 1.61e4 &    8.5 \\ \hline 
0.8 &  3.40e-1 & 9.01e4 & 9.34e-4 & 3.55e4 & 9.34e-4 & 3.55e4 &    6.4 & 9.34e-4 & 3.55e4 & 9.34e-4 & 3.55e4 &    7.1 \\ \hline 
0.7 &  3.35e-1 & 9.60e4 & 3.42e-5 & 4.51e4 & 3.42e-5 & 4.51e4 &    6.3 & 3.42e-5 & 4.51e4 & 3.42e-5 & 4.51e4 &    7.4 \\ \hline 
0.6 &  3.74e-1 & 1.03e5 & 2.56e-3 & 6.31e4 & 2.56e-3 & 6.31e4 &    5.9 & 2.56e-3 & 6.31e4 & 2.56e-3 & 6.31e4 &    6.8 \\ \hline 
0.5 &  3.74e-1 & 1.10e5 & 6.28e-3 & 7.81e4 & 6.28e-3 & 7.81e4 &    5.6 & 6.28e-3 & 7.81e4 & 6.28e-3 & 7.81e4 &    5.9 \\ \hline 
0.4 &  3.89e-1 & 1.16e5 & 7.45e-3 & 8.79e4 & 7.45e-3 & 8.79e4 &    5.0 & 7.45e-3 & 8.79e4 & 7.45e-3 & 8.79e4 &    5.5 \\ \hline 
0.3 &  4.54e-1 & 1.22e5 & 2.22e-2 & 1.03e5 & 2.22e-2 & 1.03e5 &    4.3 & 2.22e-2 & 1.03e5 & 2.22e-2 & 1.03e5 &    5.2 \\ \hline 
0.2 &  8.04e-1 & 1.30e5 & 1.35e-1 & 1.16e5 & 1.35e-1 & 1.16e5 &    4.0 & 1.35e-1 & 1.16e5 & 1.35e-1 & 1.16e5 &    4.8 \\ \hline 
0.1 &  4.03 & 1.38e5 & 2.67 & 1.33e5 & 3.42 & 1.28e5 &    4.3 & 2.59 & 1.26e5 & 2.59 & 1.26e5 &    4.8 \\ \hline

\multicolumn{13}{|c|}{N=1000}\\ \hline
1.0 &  3.34e-1 & 3.08e5 & 1.43e-5 & 1.28e4 & 1.43e-5 & 1.28e4 &    163 & 1.43e-5 & 1.28e4 & 1.43e-5 & 1.28e4 &     87 \\ \hline 
0.9 &  3.51e-1 & 3.38e5 & 8.13e-6 & 6.63e4 & 8.13e-6 & 6.63e4 &    113 & 8.13e-6 & 6.63e4 & 8.13e-6 & 6.63e4 &     49 \\ \hline 
0.8 &  3.51e-1 & 3.63e5 & 9.46e-5 & 1.29e5 & 9.46e-5 & 1.29e5 &    103 & 9.46e-5 & 1.29e5 & 9.46e-5 & 1.29e5 &     57 \\ \hline 
0.7 &  3.54e-1 & 3.89e5 & 3.20e-4 & 1.88e5 & 3.20e-4 & 1.88e5 &     91 & 3.20e-4 & 1.88e5 & 3.20e-4 & 1.88e5 &     42 \\ \hline 
0.6 &  3.59e-1 & 4.14e5 & 1.51e-4 & 2.36e5 & 1.51e-4 & 2.36e5 &     84 & 1.51e-4 & 2.36e5 & 1.51e-4 & 2.36e5 &     33 \\ \hline 
0.5 &  3.67e-1 & 4.40e5 & 2.47e-5 & 2.84e5 & 2.47e-5 & 2.84e5 &     57 & 2.47e-5 & 2.84e5 & 2.47e-5 & 2.84e5 &     35 \\ \hline 
0.4 &  3.61e-1 & 4.64e5 & 1.05e-3 & 3.43e5 & 1.05e-3 & 3.43e5 &     50 & 1.05e-3 & 3.43e5 & 1.05e-3 & 3.43e5 &     27 \\ \hline 
0.3 &  3.97e-1 & 4.90e5 & 1.31e-2 & 4.14e5 & 1.31e-2 & 4.14e5 &     34 & 1.31e-2 & 4.14e5 & 1.31e-2 & 4.14e5 &     23 \\ \hline 
0.2 &  5.26e-1 & 5.17e5 & 5.00e-2 & 4.65e5 & 5.00e-2 & 4.65e5 &     18 & 5.00e-2 & 4.65e5 & 5.00e-2 & 4.65e5 &     18 \\ \hline 
0.1 &  2.24 & 5.44e5 & 1.44 & 5.25e5 & 2.61 & 5.19e5 &     19 & 1.44 & 5.25e5 & 3.91 & 5.09e5 &     18 \\ \hline \end{tabular}
	\label{tab:q05mserand}
\end{table}

\begin{table}[htbp]
  \centering
  \footnotesize
\setlength{\tabcolsep}{3pt}
  \caption{The MSE of the eigenvector, RSlocal and IDDM for dataset from \cite{Singer:2011ba}}
    \begin{tabular}{|c|c|c|cc|cc|c|cc|cc|c|}\hline
    \multirow{2}{*}{NSR}    & \multicolumn{2}{c|}{eigs}     & \multicolumn{5}{c|}{RSlocal} & \multicolumn{5}{c|}{IDDM} \\\cline{2-13}
    & mse & obj & \multicolumn{2}{c|}{mse1, obj1} & \multicolumn{2}{c|}{mse2, obj2} & cpu & \multicolumn{2}{c|}{mse1, obj1} & \multicolumn{2}{c|}{mse2, obj2} & cpu \\\hline
    
\multicolumn{13}{|c|}{N=100}\\ \hline
  1 &  3.00e-3 & 1.05e3 & 3.04e-4 & 9.47e2 & 3.04e-4 & 9.47e2 &    0.5 & 2.99e-4 & 9.47e2 & 3.04e-4 & 9.47e2 &    0.5 \\ \hline 
  2 &  4.41e-3 & 1.20e3 & 5.04e-4 & 1.07e3 & 5.04e-4 & 1.07e3 &    0.5 & 4.95e-4 & 1.07e3 & 5.15e-4 & 1.07e3 &    0.5 \\ \hline 
  4 &  1.06e-2 & 1.52e3 & 1.60e-3 & 1.35e3 & 1.60e-3 & 1.35e3 &    0.5 & 1.07e-3 & 1.35e3 & 1.10e-3 & 1.35e3 &    0.5 \\ \hline 
  8 &  2.88e-2 & 2.01e3 & 6.19e-3 & 1.81e3 & 6.19e-3 & 1.81e3 &    0.6 & 3.38e-3 & 1.79e3 & 3.38e-3 & 1.79e3 &    0.5 \\ \hline 
 16 &  8.81e-2 & 2.69e3 & 3.77e-2 & 2.50e3 & 3.77e-2 & 2.50e3 &    0.6 & 3.04e-2 & 2.48e3 & 3.12e-2 & 2.48e3 &    0.5 \\ \hline 
 32 &  2.63e-1 & 3.51e3 & 1.88e-1 & 3.34e3 & 1.88e-1 & 3.34e3 &    0.5 & 1.88e-1 & 3.34e3 & 2.07 & 3.32e3 &    0.4 \\ \hline 
 64 &  2.60 & 4.32e3 & 2.30 & 4.03e3 & 2.30 & 4.03e3 &    0.5 & 2.30 & 4.03e3 & 2.40 & 3.89e3 &    0.4 \\ \hline 
128 &  3.28 & 4.79e3 & 3.08 & 4.64e3 & 3.08 & 4.64e3 &    0.4 & 3.08 & 4.64e3 & 3.40 & 4.45e3 &    0.4 \\ \hline 
256 &  4.10 & 5.04e3 & 4.05 & 4.99e3 & 4.05 & 4.99e3 &    0.4 & 4.05 & 4.99e3 & 4.19 & 4.69e3 &    0.4 \\ \hline 
512 &  4.97 & 5.15e3 & 4.98 & 5.09e3 & 4.98 & 5.09e3 &    0.4 & 4.86 & 5.03e3 & 5.16 & 4.93e3 &    0.4 \\ \hline

\multicolumn{13}{|c|}{N=500}\\ \hline
  1 &  1.20e-3 & 2.53e4 & 1.54e-4 & 2.43e4 & 1.54e-4 & 2.43e4 &     16 & 1.54e-4 & 2.43e4 & 1.54e-4 & 2.43e4 &    7.8 \\ \hline 
  2 &  1.77e-3 & 2.87e4 & 2.94e-4 & 2.75e4 & 2.94e-4 & 2.75e4 &     17 & 2.94e-4 & 2.75e4 & 2.94e-4 & 2.75e4 &     11 \\ \hline 
  4 &  5.10e-3 & 3.60e4 & 7.58e-4 & 3.39e4 & 7.58e-4 & 3.39e4 &     16 & 7.07e-4 & 3.39e4 & 7.07e-4 & 3.39e4 &    7.1 \\ \hline 
  8 &  1.91e-2 & 4.86e4 & 3.14e-3 & 4.53e4 & 3.14e-3 & 4.53e4 &     17 & 3.14e-3 & 4.53e4 & 3.14e-3 & 4.53e4 &    7.1 \\ \hline 
 16 &  6.35e-2 & 6.58e4 & 1.81e-2 & 6.19e4 & 1.81e-2 & 6.19e4 &    8.7 & 1.81e-2 & 6.19e4 & 1.81e-2 & 6.19e4 &    7.6 \\ \hline 
 32 &  2.18e-1 & 8.62e4 & 1.34e-1 & 8.25e4 & 1.34e-1 & 8.25e4 &    5.5 & 1.34e-1 & 8.25e4 & 1.34e-1 & 8.25e4 &    6.8 \\ \hline 
 64 &  1.75 & 1.07e5 & 1.61 & 9.86e4 & 1.61 & 9.86e4 &    5.6 & 1.61 & 9.86e4 & 2.23 & 9.52e4 &    9.3 \\ \hline 
128 &  2.62 & 1.21e5 & 2.32 & 1.14e5 & 2.32 & 1.14e5 &    5.7 & 2.32 & 1.14e5 & 2.84 & 1.11e5 &    8.8 \\ \hline 
256 &  3.49 & 1.29e5 & 3.22 & 1.24e5 & 3.22 & 1.24e5 &    5.0 & 3.22 & 1.24e5 & 4.18 & 1.22e5 &    5.5 \\ \hline 
512 &  4.59 & 1.32e5 & 4.65 & 1.32e5 & 4.84 & 1.27e5 &    6.1 & 4.58 & 1.26e5 & 4.58 & 1.26e5 &    5.9 \\ \hline

\multicolumn{13}{|c|}{N=1000}\\ \hline
  1 &  8.27e-4 & 1.00e5 & 1.25e-4 & 9.73e4 & 1.25e-4 & 9.73e4 &     59 & 1.25e-4 & 9.73e4 & 1.25e-4 & 9.73e4 &     51 \\ \hline 
  2 &  1.46e-3 & 1.15e5 & 2.58e-4 & 1.10e5 & 2.58e-4 & 1.10e5 &     75 & 2.58e-4 & 1.10e5 & 2.58e-4 & 1.10e5 &     67 \\ \hline 
  4 &  4.56e-3 & 1.44e5 & 6.60e-4 & 1.36e5 & 6.60e-4 & 1.36e5 &     65 & 6.60e-4 & 1.36e5 & 6.60e-4 & 1.36e5 &     58 \\ \hline 
  8 &  1.81e-2 & 1.94e5 & 2.46e-3 & 1.81e5 & 2.46e-3 & 1.81e5 &     60 & 2.46e-3 & 1.81e5 & 2.46e-3 & 1.81e5 &     45 \\ \hline 
 16 &  6.41e-2 & 2.63e5 & 1.43e-2 & 2.47e5 & 1.43e-2 & 2.47e5 &     43 & 1.43e-2 & 2.47e5 & 1.43e-2 & 2.47e5 &     55 \\ \hline 
 32 &  2.32e-1 & 3.44e5 & 1.43e-1 & 3.30e5 & 1.43e-1 & 3.30e5 &     28 & 1.43e-1 & 3.30e5 & 2.09 & 3.23e5 &     35 \\ \hline 
 64 &  1.78 & 4.29e5 & 1.64 & 3.93e5 & 1.64 & 3.93e5 &     32 & 1.64 & 3.93e5 & 2.24 & 3.82e5 &     37 \\ \hline 
128 &  2.50 & 4.83e5 & 2.24 & 4.52e5 & 2.24 & 4.52e5 &     36 & 2.24 & 4.52e5 & 2.67 & 4.44e5 &     30 \\ \hline 
256 &  3.48 & 5.17e5 & 3.21 & 4.98e5 & 3.21 & 4.98e5 &     33 & 3.21 & 4.98e5 & 4.10 & 4.89e5 &     37 \\ \hline 
512 &  4.62 & 5.32e5 & 4.62 & 5.32e5 & 4.79 & 5.08e5 &     30 & 4.62 & 5.32e5 & 4.86 & 5.05e5 &     29 \\ \hline 
	\end{tabular}
    \label{tab:q05msesinger}
\end{table}

\section{Conclusion}
\label{sec:conclusion}
The goal of this paper is to construct an algorithm which is able to identify
global solutions of minimization with orthogonality constraints.   Our strategy
is simply alternating between a local algorithm on Stiefel manifold and a
gradient flow method with stochastic diffusion on manifold.  
 The main concept is that a suitable diffusion term is able to drive the
 iteration to escape the region around a local solution. We derive an extrinsic
 form of the Brownian motion on the manifold and design a numerical efficient
 scheme to solve the corresponding SDE on manifold. We further theoretically 
 show the half order convergence of the proposed numerical method for solving 
 SDE on the Stefiel manifold. Moreover, convergence to the
 global minimizer is also theoretically established as long as the diffusion is sufficiently
 enough. However, our extensive numerical experiments on polynomial optimization
 and 3D structure determination from Cryo-EM show that a few cycles of our
 algorithm is often able to provide a better solution than the local algorithm.
 Although both theoretical and numerical results are still limited in certain
 senses, they are indeed promising especially for problems with good structures.
 Our future work includes a better theoretical understanding the algorithms,
 refining them for more typical applications and some better ways on choosing
  or even learning the diffusion parameter $\sigma(t)$.

\clearpage
\bibliographystyle{siamplain}
\bibliography{GlobalOptStfMfd.bib}
\end{document}